\numberwithin{equation}{section}
\newtheorem{prop}{Proposition}[section]
\newtheorem{lem}[prop]{Lemma}
\newtheorem{thm}[prop]{Theorem}
\newtheorem{dfn}[prop]{Definition}
\newtheorem{rem}{Remark}[section]
\newtheorem{prf}{Proof.}
\newenvironment{proof}{\begin{prf}\rm }{\hfill \qed \end{prf}}
\newenvironment{remark}{\begin{rem}\rm }{\end{rem}}
\newenvironment{definition}{\begin{dfn}\rm }{\end{dfn}}
\newcommand{\abs}[1]{\left\vert{#1}\right\vert}
\newcommand{\R}{{\mathbb R}}
\newcommand{\C}{{\mathbb C}}
\renewcommand{\H}{{\mathbb H}}
\newcommand{\rsetvert}{\:\right\vert\:}
\newcommand{\set}[2]{\left\{\left.#1\vphantom{#2}\rsetvert#2\right\}}
\def\calA{{\mathcal A}}
\def\calS{{\mathcal S}}
\def\qed{\ifmmode\def\@qed{\quad\mbox{$\Box$}}\else\def\@qed{\hfill $\Box$}\fi \@qed}
\def\gerg{{\mathfrak g}}
\def\gerh{{\mathfrak h}}
\def\gerk{{\mathfrak k}}
\def\gers{{\mathfrak s}}
\def\gero{{\mathfrak o}}
\def\gerp{{\mathfrak p}}
\def\geru{{\mathfrak u}}
\def\gerH{{\mathfrak h}}
\def\gerl{{\mathfrak l}}
\newcommand{\sufcond}{\sharp}
\begin{document}
\title{Clifford quartic forms and local functional equations of non-prehomogeneous type}
\author{%
Fumihiro Sato%
\thanks{The first and second authors are partially supported by the grant in aid of scientific research of JSPS No.24540029, 25247001, 15K04800 and 24540049. }
\footnote{Department of Mathematics, Rikkyo University, 3-34-1 Nishi-Ikebukuro, Toshima-ku,
Tokyo, 171-8501, Japan. \newline e-mail: \texttt{sato@rikkyo.ac.jp }}
\quad and \quad Takeyoshi Kogiso${}^*$\!\!
\footnote{Department of Mathematics, Josai University, 1-1 Keyakidai, Sakado, Saitama, 350-0295, Japan.\newline e-mail: \texttt{kogiso@math.josai.ac.jp}}%
}
\date{}
\maketitle

\begin{abstract} 
It is known that one can associate local zeta functions satisfying a functional equation  to the irreducible relative invariant of an irreducible regular prehomogeneous vector space.  
We construct polynomials of degree $4$ that can not be obtained from prehomogeneous vector spaces, but, for which one can associate local zeta functions satisfying functional equations.  
Let $C_n $ be the Clifford algebra of the positive definite real quadratic form $v_1^2+\cdots+v_n^2$. For a $C_p \otimes C_q$-module $W$,   
we define a homogeneous polynomial $\tilde P$ (called a {\it Clifford quartic form}) of degree 4 on $W$  such that the associated local zeta functions satisfy a functional equation. 
The Clifford quartic forms $\tilde P$ can not be a relative invariants of any prehomogeneous vector space unless  $p+q$ or $\dim W$ are small. 
We also classify the exceptional cases of small dimension, namely, we determine all the prehomogeneous vector spaces with Clifford quartic forms as relative invariant.  
\end{abstract}

\noindent{\small 2010 Mathematics Subject Classifications: Primary 11S40, 11E45, 11E88; Secondary 15A63, 15A66.}

\section*{Introduction}

Let $(\mathbf{G},\rho,\mathbf{V})$ be an irreducible regular prehomogeneous vector space defined over $\R$ and $(\mathbf{G},\rho^*,\mathbf{V}^*)$ the dual of  $(\mathbf{G},\rho,\mathbf{V})$. Then, there exists an  irreducible homogeneous polynomial $P(v)$ (resp.\ $P^*(v^*)$) on $\mathbf{V}$ (resp.\ $\mathbf{V}^*$) such that the complement $\mathbf{\Omega}$ (resp.\ $\mathbf{\Omega}^*$) in $\mathbf{V}$ (resp.\ $\mathbf{V}^*$) of the hypersurface defined by $P(v)$ (resp.\ $P^*(v^*)$) is a single $\mathbf{G}$-orbit. Let $\Omega_1,\ldots,\Omega_\nu$ (resp.\ $\Omega^*_1,\ldots,\Omega^*_\nu$) be the connected components of $\mathbf{\Omega}\cap\mathbf{V}(\R)$ (resp.\ $\mathbf{\Omega^*}\cap\mathbf{V}^*(\R)$). 
For a rapidly decreasing function $\Phi$ (resp.\ $\Phi^*$) on $\mathbf{V}(\R)$ (resp.\ $\mathbf{V}^*(\R)$) and $i=1,\ldots,\nu$, the local zeta functions are defined by 
\[
\zeta_i(s,\Phi)= \int_{\Omega_i} \abs{P(v)}^s \Phi(v)\,dv, \quad
\zeta_i^*(s,\Phi^*)= \int_{\Omega^*_i} \abs{P^*(v^*)}^s \Phi^*(v^*)\,dv^*. 
\]
These integrals are absolutely convergent for $\Re(s)>0$ and can be continued to meromorphic functions of $s$ in $\C$. The fundamental theorem in the theory of prehomogeneous vector spaces (\cite{Sato-Shintani}, \cite{F.Sato},  \cite{KimuraBook}) states that they satisfy a functional equation 
\begin{equation}
\label{eqn:0.1}
\zeta_i^*(s,\hat\Phi)=\sum_{j=1}^\nu \gamma_{ij}(s)\zeta_i\left(-\frac nd -s,\Phi\right),
\end{equation}
where $\hat\Phi$ is the Fourier transform of $\Phi$, $n=\dim \mathbf{V}$, $d = \deg P$ and the gamma-factors $\gamma_{ij}(s)$ are meromorphic functions of $s$ independent of $\Phi$. 

Since the local zeta functions can be defined for an arbitrary polynomial $P$,  
it is natural to ask whether there are any polynomials other than the ones obtained from the theory of prehomogeneous vector spaces satisfying a functional equation
of the form $(\ref{eqn:0.1})$. 
Such polynomials, if exist, should be analytically and arithmetically interesting objects. 

The first example was found by Faraut and Koranyi \cite[Theorem 16.4.3]{Faraut-Koranyi} (see also Clerc \cite{Clerc}). 
They constructed polynomials having a functional equation of the form $(\ref{eqn:0.1})$ from representations of  Euclidean Jordan algebras and 
observed that the polynomials can not be obtained from prehomogeneous vector spaces for the simple Euclidean Jordan algebras of rank 2 (apart from some low-dimensional exceptions). 
However it seems that no complete determination of the non-prehomogeneous cases has not been done yet. 

In \cite{F.Sato4}, the first author considered the pullback of local functional equation by non-degenerate dual quadratic mappings. 
Let $P$ (resp.\ $P^*$) be a homogeneous polynomial of degree $d$ on a $\C$-vector space $\mathbf V$ (resp.\ $\mathbf V^*$) with $\R$-structure $V$ (resp.\ $V^*$). Here $\mathbf{V}^*$ is the vector space dual to $\mathbf {V}$. 
Let $Q:\mathbf{W}\rightarrow\mathbf{V}$ and $Q^*:\mathbf{W}^*\rightarrow\mathbf{V}^*$ be non-degenerate dual quadratic mappings (in the sense explained in \S 1). Then, if the local zeta functions attached to $P$ and $P^*$ satisfy a local functional equation of the form $(\ref{eqn:0.1})$, then the local zeta functions attached to the pullbacks $\tilde P=P\circ Q$ and  $\tilde P^*=P^*\circ Q^*$ also satisfy a similar functional equation (see Theorem \ref{thm:main}). 
This generalizes the earlier result of Faraut and Koranyi (see \cite[\S 2.2]{F.Sato4}). 

In the present paper, we examine the case where $\dim \mathbf{V} = p+q$ and $P(v)=v_1^2+\cdots+v_p^2-v_{p+1}^2-\cdots-v_{p+q}^2$ and classify the self-dual non-degenerate quadratic mappings  $Q$ to the quadratic space $(\mathbf{V},P)$. 
As a result, we can obtain many examples of polynomials having local functional equations, but not obtained from prehomogeneous vector spaces.   
Our main results are summarized as follows:
\begin{itemize}
\item 
The self-dual quadratic mappings to $(\mathbf{V},P)$ defined over $\R$ are in one to one correspondence to (not necessarily irreducible) representations of $C_p \otimes C_q$, where $C_p$ and $C_q$ are the real Clifford algebras of the positive definite quadratic forms $v_1^2+\cdots+v_p^2$ and $v_{p+1}^2+\cdots+v_{p+q}^2$ (Theorem \ref{thm:corresponds to tensor prod of Clif}). 
For a representation $\rho$ of $C_p \otimes C_q$ on a real vector space $W=\mathbf{W}(\R)=\R^m$, we may assume that the images $S_i=\rho(e_i)$ of the standard basis $e_1,\ldots,e_{p+q}$ are symmetric  matrices. 
Then the quadratic mapping $Q:W\rightarrow V$ given by $Q(w)=({}^tw S_1 w,\ldots,{}^tw S_{p+q} w)$ is self-dual and we call the polynomial
\[
\tilde P(w)=P(Q(w))=\sum_{i=1}^p ({}^tw S_i w)^2 - \sum_{j=p+1}^{p+q} ({}^tw S_j w)^2
\]
the {\it Clifford quartic form}\/ associated with $\rho$. 
In the special  case where $(p,q)=(1,q)$ and $S_1$ is the identity matrix, $\tilde P$ coincides with the polynomial that Faraut-Koranyi \cite{Faraut-Koranyi} obtained from a representation of the simple Euclidean Jordan algebra of rank 2.
\item 
The quadratic mapping $Q$ given as above is non-degenerate, if and only if the associated Clifford quartic form $\tilde P$ does not vanish identically and then the local zeta functions for $\tilde P$ satisfy functional equations of the form $(\ref{eqn:0.1})$ 
(Theorem \ref{thm:2.14}). 
For simplicity we give here an explicit formula for the functional equation for $p\geq q \geq 2$. 
For a rapidly decreasing function $\Psi$ on $W$, the local zeta functions are defined by 
\[
\tilde\zeta_+(s,\Psi) = \int_{\tilde P(w)>0} \abs{\tilde P(w)}^s \Psi(w)\,dw, \quad 
\tilde\zeta_-(s,\Psi) = \int_{\tilde P(w)<0} \abs{\tilde P(w)}^s \Psi(w)\,dw
\]
and they satisfy the functional equation
\begin{eqnarray*}
\begin{pmatrix}
\tilde\zeta_+\left(s,\hat\Psi\right)  \\
\tilde\zeta_-\left(s,\hat\Psi\right)  
\end{pmatrix}
 &=& 2^{4s+m/2}\pi^{-4s -2-m/2} \Gamma (s+1) 
          \Gamma \left(s+\frac n2 \right) \Gamma\left (s+1+\frac{m-2n}4 \right) 
          \Gamma \left(s+\frac m4\right)  \\
 & &   {}\times  \sin \pi s \begin{pmatrix}
     \sin\pi\left(s+\frac{q-p}2\right) 
        & -2\sin\frac{\pi p}2 \cos\frac{\pi q}2  \\
     -2\sin\frac{\pi q}2 \cos\frac{\pi p}2
       & \sin\pi\left(s+\frac{p-q}2\right) 
     \end{pmatrix}
                  \begin{pmatrix}
                  \tilde\zeta_+\left(-\frac m4 - s,\Psi\right)  \\
                  \tilde\zeta_-\left(-\frac m4 - s,\Psi\right)  
                   \end{pmatrix},
\end{eqnarray*}
where $m=\dim W$ and $\hat \Psi$ is the Fourier transform of $\Psi$.
The degenerate cases appear only for $(p,q,m)=(2,1,2),(3,1,4),(5,1,8),(9,1,16),(2,2,4),(3,3,8),(5,5,16)$, and $(p,q)=(1,1)$ with $S_1 = \pm S_2$ (Theorem \ref{thm:degenerate}). 
\item 
We completely determine when the Clifford quartic form $\tilde P(w)$ becomes a relative invariant of some prehomogeneous vector space (Theorem \ref{thm:pv}). 
In particular, if $p+q \geq 12$, then, there exist no prehomogeneous vector spaces having $\tilde P$ as relative invariant. If $p+q \leq 4$, then, $\tilde P$ is always a relative invariant of some prehomogeneous vector space. If $ 5 \leq p+q \leq 11$ and $p+q\ne6$, then, $\tilde P$ is a relative invariant of a prehomogeneous vector space only for very low-dimensional cases. 
Thus most of the Clifford quartic forms are non-prehomogeneous. 
\item 
To classify the cases related to prehomogeneous vector spaces, we need good knowledge on the group of symmetries of the Clifford quartic form $\tilde P$. 
The group contains $Spin(p,q) \times H_{p,q}(\rho)$ where $H_{p,q}(\rho)$ is the intersection of the orthogonal groups $O(S_1),\ldots,O(S_{p+q})$. 
Except for a few low-dimensional cases, the Lie algebra of the group of symmetries of $\tilde P$ coincides with $\mathfrak{so}(p,q) \times \mathrm{Lie}(H_{p,q}(\rho))$ (Theorem \ref{thm:3.1}) and the structure of $\mathrm{Lie}(H_{p,q}(\rho))$ can be determined explicitly (Theorem \ref{thm:3.4}). 
\end{itemize}

Thus our results show that the class of homogeneous polynomials that satisfy local functional equations of the form (\ref{eqn:0.1}) is broader than the class of relative invariants of regular prehomogeneous vector spaces. 
The characterization of such polynomials is an interesting open problem.  
In relation to this characterization problem (in a more general form), Etingof, Kazhdan and Polishchuk (\cite{EKP}) considered 
 the following condition for a homogeneous rational function $f$ on a finite-dimensional vector space $\mathbf V$:
\begin{quote}
$v \mapsto \mathrm{grad}\,f(v)$ defines a birational mapping of $\mathbb P(\mathbf V) \longrightarrow \mathbb P(\mathbf V^*)$.
\end{quote}
They called this condition {\it the projective semiclassical condition\/} (PSC). A function satisfying PSC is often called {\it homaloidal}. 
It is observed that the condition PSC is closely related to the existence of local functional equation.  
For example, regular prehomogeneous vector spaces has homaloidal relative invariant polynomials and it is difficult to construct non-prehomogeneous homaloidal polynomials.  
The classification of homaloidal polynomials is a difficult problem and of considerable interest in algebraic geometry (\cite{Bruno}, \cite{CRS}, \cite{Dolgachev}).

For a homaloidal homogeneous rational function $f$, there exists a rational function $f^*$ satisfying the identity $f^*(\mathrm{grad}\,\log f(v))=1/f(v)$, which is called the multiplicative Legendre transform of $f$.  
In \cite{EKP}, the authors raised the following question and answered to it affirmatively for cubic forms:
\begin{quote}
``Is it true that any homaloidal polynomial whose multiplicative Legendre transform is also a polynomial is a relative invariant of a prehomogeneous vector space?''  
\end{quote} 
However, it can be easily proved (Theorem \ref{thm:homaloidal}) that every Clifford quartic form is homaloidal and its multiplicative Legendre transform coincides with the original Clifford quartic form (up to constant). 
Hence the answer to the question above is negative, since Clifford quartic forms are nonprehomogeneous in general. 

The organization of this paper is as follows:
In \S 1, we recall the pullback theorem of local functional equations in \cite{F.Sato4}. In \S 2, the degenerate cases are classified and the functional equations satisfied by the local zeta functions of the Clifford quartic forms are calculated. In \S 3, we investigate the groups of symmetries of the Clifford quartic forms. The proofs of the main results in this section (Theorems \ref{thm:3.1}, \ref{thm:3.4}) will be given in \S 5 and 6. 
The classification of the prehomogeneous cases is done in \S 4.

As in the case of relative invariants of prehomogeneous vector spaces, the Clifford quartic forms are expected to enjoy rich arithmetic properties. 
In fact,  we can associate with the Clifford quartic forms global zeta functions satisfying a functional equation, which are analogues of genus zeta functions of quadratic forms. 
For the polynomials constructed by Faraut-Koranyi, Achab defined global zeta functions and proved their functional equations (\cite{Achab}, \cite{Achab2}).  
In her argument, it is crucial that $Q^{-1}(v)_\R$ $(P(v)\ne0)$ is compact and her method does not apply to our general setting.
Our method is based on the theory of automorphic pairs of distributions on prehomogeneous vector spaces (\cite{Tamura}, \cite{Sato-Tamura}). 
We discuss it in a separate paper (\cite{F.Sato5}). 

\subsection*{Acknowledgment} 
We thank H.~Ochiai, H.~Ishi and T.~Yoshino for helpful discussions. 

%%%%%%%%%%%%%%%%%%%%%%%%%%%%%%%%%%%%%%%%

\subsection*{Notation} 

We denote by $\R, \C$ and $\H$, respectively, the field of real numbers, the field of complex numbers and the Hamilton quaternion algebra. 
For $\mathbb{K}=\R,\C,\H$, we write 
\begin{eqnarray*}
M(m;\mathbb{K}) & & \text{for the matrix algebra of size $m$ over $\mathbb{K}$}, \\
M(m,n;\mathbb{K}) & & \text{for the set of $m$ by $n$ matrices with entries in $\mathbb{K}$}, \\
\mathrm{Sym}(m;\mathbb{K}) &=& \set{X \in M (m;{\Bbb K})}{{}^t X =X}, \\
\mathrm{Alt}(m;\mathbb{K}) &=& \set{X \in M (m;{\Bbb K})}{{}^t X =-X}.
\end{eqnarray*}
For a $w \in {\R}^m$ and an $S \in \mathrm{Sym}(m,\R)$, we put $S[w]:={}^t w S w$.
We say that the signature of $S$ (or of the quadratic form $S[w]$) is $(p,q)$, if $S$ is non-degenerate and has exactly $p$ positive and $q$ negative eigenvalues. 
For square matrices $A \in M(m;\mathbb{K})$ and $B \in M(n;\mathbb{K})$,  we put  $A \perp B := \begin{pmatrix} A & 0 \\ 0 & B \end{pmatrix} \in M(m+n; \mathbb{K})$. 
The identity matrix and the zero matrix of size $m$ are denoted by $1_m$ and $0_m$, respectively. We write $1_{p,q}$ for $1_p\perp -1_q$. 
We put $\mathbf{e}[z]:=\exp(2\pi\sqrt{-1}z)$. 
For a real vector space $V$, we denote by $\calS(V)$ the space of rapidly decreasing functions on $V$. 
We use the same symbols as those in \cite[Chapter X, \S 2.1]{Helgason} to denote real classical Lie algebras. 
%%%%%%%%%%%%%%%%%%%%%%%%%%%%%%%%%%%%%%%%%%%%%%%%%
 
\section{Pullback of local functional equations by quadratic mappings}

In this section, we recall the main result of \cite{F.Sato4}. 

Let $\mathbf{V}$ (resp.\ $\mathbf{W}$) be a complex vector space  of dimension $n$ (resp.\ $m$) with real-structure $V$ (resp.\ $W$) and $\mathbf{V}^*$ (resp.\ $\mathbf{W}^*$) the vector space dual to $\mathbf{V}$ (resp.\ $\mathbf{W}$). 
The dual vector space $V^*$ (resp.\ $W^*$) of the real vector space $V$ (resp.\ $W$) can be regarded as a real-structure of $\mathbf{V}^*$(resp.\ $\mathbf{W}$). 
Let $P$ (resp.\ $P^*$) be an irreducible  homogeneous polynomial of degree $d$ on $\mathbf{V}$  
(resp.\ $\mathbf{V}^*$) defined over $\R$. 
We put
\begin{gather*}
\mathbf{\Omega}=\set{v \in \mathbf{V}}{P(v)\ne0}, \quad 
\Omega=\mathbf{\Omega}\cap V,\\ 
\mathbf{\Omega^*}=\set{v^* \in \mathbf{V}^*}{P^*(v^*)\ne0}, \quad 
\Omega^*=\mathbf{\Omega^*}\cap V^*.
\end{gather*}
We assume that 
\begin{description}
\item[(A.1)] there exists a biregular rational mapping $\phi:\mathbf{\Omega} \rightarrow \mathbf{\Omega^*}$ defined over $\R$. 
\end{description}
Let 
\[
\Omega=\Omega_1 \cup \cdots \cup \Omega_\nu, \quad
\Omega^*=\Omega_1^* \cup \cdots \cup \Omega_\nu^*
\]
be the decomposition of $\Omega$ and $\Omega^*$ into connected components. 
Note that (A.1) implies that the numbers of connected components of $\Omega$ and  $\Omega^*$ 
are the same and we may assume that 
\[
\Omega_j^*=\phi(\Omega_j) \quad (j=1,\ldots,\nu).
\]

Suppose that we are given quadratic mappings $Q:\mathbf{W} \rightarrow \mathbf{V}$ 
and $Q^*:\mathbf{W^*} \rightarrow \mathbf{V^*}$ defined over $\R$. 
The mappings $B_Q:\mathbf{W} \times \mathbf{W} \rightarrow \mathbf{V}$ 
and $B_{Q^*}:\mathbf{W^*} \times \mathbf{W^*} \rightarrow \mathbf{V^*}$ 
defined by 
\[
B_Q(w_1,w_2):=Q(w_1+w_2)-Q(w_1)-Q(w_2), \quad
B_{Q^*}(w_1^*,w_2^*):=Q^*(w_1^*+w_2^*)-Q^*(w_1^*)-Q^*(w_2^*)
\]
are bilinear. 
For given $v \in \mathbf{V}$ and $v^* \in \mathbf{V^*}$, the mappings $Q_{v^*}:\mathbf{W} \rightarrow \C$ and 
$Q^*_{v}:\mathbf{W^*} \rightarrow \C$ defined by 
\[
Q_{v^*}(w)=\langle Q(w),v^* \rangle, \quad   
Q^*_{v}(w^*)=\langle v,Q^*(w^*)\rangle
\]
are quadratic forms on $\mathbf{W}$ and $\mathbf{W^*}$, which take values in $\R$ on $W$ and $W^*$,  respectively. 
We assume that $Q$ and $Q^*$ are non-degenerate 
and dual to each other with respect to the biregular mapping $\phi$ in (A.1). 
This means that $Q$ and $Q^*$ satisfy the following: 
\begin{description}
\item[(A.2)] 
(i) (Nondegeneracy) The open set $\tilde{\mathbf{\Omega}}:=Q^{-1}(\mathbf{\Omega})$ 
(resp.\ $\tilde{\mathbf{\Omega^*}}={Q^*}^{-1}(\mathbf{\Omega^*})$) is not empty  and the rank of the differential of $Q$ (resp.\ $Q^*$) 
at $w \in \tilde{\mathbf{\Omega}}$ 
(resp.\ $w^* \in \tilde{\mathbf{\Omega^*}}$) is equal to $n$. 
(In particular, $m \geq n$.)  \\
(ii) (Duality) For any $v \in \mathbf{\Omega}$, 
the quadratic forms $Q_{\phi(v)}$ and $Q^*_v$ are dual to each other. 
Namely, fix a basis of $W$ and the basis of $W^*$ dual to it, and denote by $S_{v^*}$ and $S^*_v$ the 
matrices of the quadratic forms $Q_{v^*}$ and $Q^*_v$ with respect to the bases. 
Then $S_{\phi(v)}$ and $S^*_v$ $(v \in \mathbf{\Omega})$ are non-degenerate and $S_{\phi(v)}=(S^*_v)^{-1}$.   
\end{description}

\begin{remark}  
(1) By the assumption, there exist non-zero constants $\alpha, \beta$ satisfying
\[
\det(Q_v^*)=\alpha P(v)^{m/d}, \quad 
\det \left(\frac{\partial \phi(v)_i}{\partial v_j}\right)=\beta P(v)^{-2n/d}. 
\]

(2) 
In \cite{F.Sato4}, the assumptions (A.1) and (A.3) (=\,(A.2) in the present paper) are erroneously formulated only by referring to real structure. Moreover $\Omega$ and $\Omega^*$ in \cite[p.167, Lines 21 and 22]{F.Sato4}  should be $\mathbf{\Omega}$  and  $\mathbf{\Omega}^*$. 
\end{remark}

The main result in \cite{F.Sato4} is that, in the above setting, if $P(v)$ and $P^*(v^*)$ satisfy a local functional equation, then the pull backs $\tilde P(w):=P(Q(w))$ and $\tilde P^*(w^*):=P^*(Q^*(w^*))$ also satisfy a local functional equation.   Let us give a precise formulation.  

For an $s \in \C$ with $\Re(s)>0$, we  define a continuous function $\abs{P(v)}_j^s$ on $V$ by
\[
\abs{P(v)}_j^s
 = \begin{cases}
    \abs{P(v)}^{s}, & v \in \Omega_j, \\
   0, & v \not\in \Omega_j. 
   \end{cases}
\]
The function  $\abs{P(v)}_j^s$ can be extended to a tempered distribution depending on $s$ in $\C$ meromorphically. 
Similarly we define $\abs{P^*(v^*)}_j^s$ $(s \in \C)$.

We denote by $\calS(V)$ and $\calS(V^*)$ the spaces of rapidly decreasing functions on the real vector spaces 
$V$ and $V^*$, respectively. 
For $\Phi\in\calS(V)$ and $\Phi^*\in\calS(V^*)$, we define the local zeta functions 
by setting 
\[
\zeta_i(s,\Phi) = \int_V \abs{P(v)}_i^s \Phi(v)\,dv, \quad 
\zeta^*_i(s,\Phi^*) = \int_{V^*} \abs{P^*(v^*)}_i^s \Phi^*(v^*)\,dv^* \quad (i=1,\ldots,\nu).
\]
It is well-known that the local zeta functions $\zeta_i(s,\Phi)$, $\zeta^*_i(s,\Phi^*)$ are absolutely convergent for 
$\Re(s)>0$ and have analytic continuations to meromorphic functions of $s$ 
in $\C$.  
We assume the following:
\begin{description}
\item[(A.3)]  
A local functional equation of the form 
\begin{equation}
\label{eqn:FE}
\zeta^*_i(s,\hat\Phi) = \sum_{j=1}^\nu \Gamma_{ij}(s) \zeta_j(-\frac nd-s,\Phi)
\quad (i=1,\ldots,\nu)
\end{equation}
holds for every $\Phi \in \calS(V)$, 
where $\Gamma_{ij}(s)$ are meromorphic functions on $\C$ not depending on $\Phi$ with 
$\det(\Gamma_{ij}(s))\ne0$ and 
\[
\hat\Phi(v^*) = \int_V \Phi(v) \exp(-2\pi \sqrt{-1}\langle v, v^* \rangle)\,dv,
\]
the Fourier transform of $\Phi$.
\end{description}

We put 
\begin{gather*}
\tilde P(w) = P(Q(w)), \quad 
\tilde P^*(w^*) = P^*(Q^*(w^*)) \\
\tilde\Omega_i = Q^{-1}(\Omega_i), \quad 
\tilde\Omega^*_i = {Q^*}^{-1}(\Omega_i^*)\quad (i=1,\ldots,\nu). 
\end{gather*}
Some of $\tilde\Omega_i$'s and $\tilde\Omega^*_i$'s may be empty. 
We define $\abs{\tilde P(w)}_i^s$ and  $\abs{\tilde P^*(w^*)}_i^s$ in the same manner as above. 
For $\Psi \in \calS(W)$ and $\Psi^* \in \calS(W^*)$, we define the zeta functions associated with $\tilde P$ and $\tilde  P^*$ by
\[
\tilde\zeta_i(s, \Psi) = \int_{W} \abs{\tilde P(w)}_i^s \Psi(w)\,dw, \quad 
\tilde\zeta^*_i(s, \Psi^*) = \int_{W^*} \abs{\tilde P^*(w^*)}_i^s \Psi^*(w^*)\,dw^*.
\] 
We denote by $\hat\Psi$ the Fourier transform of $\Psi$:
\[
\hat\Psi(w^*) = \int_{W} \Psi(w) \exp(2\pi \sqrt{-1} \langle w, w^* \rangle)\,dw.
\]
Then our main result is that the local functional equation $(\ref{eqn:FE})$ for $P$ and $P^*$ implies a  
local functional equation for $\tilde P$ and $\tilde P^*$'s and the gamma factors in the new functional 
equation can be written explicitly.

\begin{thm}[\cite{F.Sato4}, Theorem 4]
\label{thm:main}
Under the assumptions {\rm (A.1), (A.2), (A.3)}, the zeta functions $\tilde\zeta_i(s, \Psi)$ and $\tilde\zeta^*_i(s, \Psi^*)$ satisfy
the local functional equation
\[
 \tilde\zeta^*_i\left(s, \hat\Psi\right)
 =  \sum_{j=1}^\nu \tilde\Gamma_{ij}(s) \tilde\zeta_j\left(-\frac{m}{2d}-s, \Psi\right), 
\]
where the gamma factors $\tilde \Gamma_{ij}(s)$ are given by 
\[
\tilde\Gamma_{ij}(s) = 2^{2ds+m/2} \abs{\alpha}^{1/2} \abs{\beta}^{-1}
\sum_{k=1}^\nu \mathbf{e}\left[\frac{p_k-q_k}8\right] \Gamma_{ik}(s)\Gamma_{kj}\left(s+\frac{m-2n}{2d}\right),
\]
where $\alpha,\beta$ are the constants defined in Remark 1.1 (1) and $(p_k,q_k)$ is the signature of the quadratic form $Q^*_v$ for $v \in \Omega_k$. 
\end{thm}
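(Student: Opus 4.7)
The plan is to derive the functional equation for $\tilde\zeta^*_i,\tilde\zeta_j$ from the known functional equation (A.3) for $\zeta^*_i,\zeta_j$, applied twice and interconnected by an oscillatory Gaussian (Fresnel) integral whose signature reflects that of $Q^*_v$ on $W^*$. The duality assumption (A.2)(ii) is precisely the compatibility needed to identify the dual Gaussian produced by the Fresnel inversion with the pull-back Gaussian on $W$.

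Concretely, I would begin with $\tilde\zeta^*_i(s,\hat\Psi)=\int_{W^*}|P^*(Q^*(w^*))|_i^s\,\hat\Psi(w^*)\,dw^*$ and replace $|P^*(v^*)|_i^s$ by its expansion obtained by inverting (A.3) in the distributional sense,
\[
|P^*(v^*)|_i^s=\sum_k \Gamma_{ik}(s)\int_V |P(v)|_k^{-n/d-s}\,e^{2\pi i\langle v,v^*\rangle}\,dv.
\]
Setting $v^*=Q^*(w^*)$, the pairing $\langle v,Q^*(w^*)\rangle$ becomes $S^*_v[w^*]$, and after interchanging the $V$ and $W^*$ integrations the inner integral is
\[
\int_{W^*} e^{2\pi i S^*_v[w^*]}\hat\Psi(w^*)\,dw^*,
\]
an indefinite Fresnel integral of signature $(p_k,q_k)$ for $v\in\Omega_k$. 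The standard Gaussian Fourier identity produces a factor $|\det S^*_v|^{-1/2}\mathbf{e}[(p_k-q_k)/8]$ together with a dual Gaussian on $W$ whose quadratic form is, by duality (A.2)(ii), exactly $S_{\phi(v)}$. By Remark 1.1, $|\det S^*_v|^{-1/2}=|\alpha|^{-1/2}|P(v)|^{-m/(2d)}$, which is the source of the prefactor $|\alpha|^{1/2}$ and of the shift of the exponent of $|P(v)|$ from $-n/d-s$ to $-n/d-s-m/(2d)$.

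At that stage I would change variables $v\mapsto v^*=\phi(v)$, using $|\det(\partial\phi/\partial v)|=|\beta||P(v)|^{-2n/d}$, which recasts the remaining $V$-integral as a local zeta integral on $V^*$ for $P^*$ tested against the test function $v^*\mapsto \int_W e^{-2\pi i\langle Q(w),v^*\rangle}\Psi(w)\,dw$. Applying (A.3) a second time, now at the shifted parameter $s+(m-2n)/(2d)$, converts this $V^*$-integral back to a $V$-integral, and substituting $v=Q(w)$ in the result recovers $\tilde\zeta_j(-m/(2d)-s,\Psi)$. Collecting the two gamma factors $\Gamma_{ik}(s)\Gamma_{kj}(s+(m-2n)/(2d))$, the Fresnel-produced signature factors $\mathbf{e}[(p_k-q_k)/8]$, the Jacobian contributions $|\alpha|^{1/2}|\beta|^{-1}$, and the powers of $2$ arising from the rescalings $S^*_v\mapsto 2S^*_v$ needed to bring each Fresnel into standard form and from the two Fourier normalizations, assembles precisely into the prefactor $2^{2ds+m/2}$ and the asserted formula for $\tilde\Gamma_{ij}(s)$.

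The main obstacle is bookkeeping rather than conceptual: one must track the sign of the signature contribution so that $(p_k-q_k)/8$ (not its negative) appears in $\mathbf{e}[\cdot]$, reconcile the several factors of $2$ into the single prefactor $2^{2ds+m/2}$, and keep all distributional manipulations well defined by first working in a common strip $\Re(s)\gg 0$ of absolute convergence and then extending by meromorphic continuation. The structural backbone is transparent: (A.3) is invoked twice, on either side of an indefinite Fresnel integral, and (A.2)(ii) $S_{\phi(v)}=(S^*_v)^{-1}$ is what makes the Fresnel/inverse-Fresnel pair compatible and turns the pull-back under $Q$ into a pull-back under $Q^*$.
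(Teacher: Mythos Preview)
The present paper does not actually prove this theorem; it merely recalls it from \cite{F.Sato4}, so there is no in-paper proof to compare against directly. That said, your sketch follows the standard route (and indeed the route of \cite{F.Sato4}): invert (A.3) to express $|P^*(v^*)|_i^s$ as an oscillatory integral over $V$, substitute $v^*=Q^*(w^*)$, perform the Fresnel integral over $W^*$ using Weil's formula for the Fourier transform of $e^{2\pi i S^*_v[w^*]}$, identify the resulting inverse quadratic form via duality $S_{\phi(v)}=(S^*_v)^{-1}$, change variables $v\mapsto\phi(v)$, and apply (A.3) once more. This is exactly the mechanism underlying the theorem.

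One small slip to watch: you write that $|\det S^*_v|^{-1/2}=|\alpha|^{-1/2}|P(v)|^{-m/(2d)}$ is ``the source of the prefactor $|\alpha|^{1/2}$,'' but those exponents on $|\alpha|$ do not match. In fact the final $|\alpha|^{1/2}|\beta|^{-1}$ arises from the combination of the Fresnel determinant $|\alpha|^{-1/2}$, the Jacobian $|\beta|$ of $\phi$, and the identity $P^*(\phi(v))=\mathrm{const}\cdot P(v)^{-1}$ (whose constant involves $\alpha,\beta$) that enters when you rewrite the $V$-integral after the change of variables as an integral in $v^*$. You should chase this constant explicitly rather than asserting it; as you yourself note, the conceptual skeleton is clear but the honest content of the proof is precisely this bookkeeping, together with the justification (via analytic continuation from a half-plane where everything converges absolutely) of the distributional Fresnel identity and the interchanges of integration.
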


\begin{remark}  
(1) The signature $(p_k,q_k)$ of  $Q^*_v(w^*)$ does not depend on the choice of $v$, since $\Omega_k$ is connected. 

(2) In \cite{F.Sato4}, the theorem is formulated for multi-variable zeta functions. 
Here we restrict ourselves to single variable zeta functions for simplicity. 
\end{remark}

The theory of prehomogeneous vector spaces (see \cite{M.Sato0}, \cite{Sato-Shintani}, \cite{F.Sato}, \cite{KimuraBook}) provides a lot of examples of $P$ and $P^*$ satisfying (A.1) and (A.3).  
Therefore, if one can construct dual non-degenerate quadratic mappings to a prehomogeneous vector space, then by Theorem \ref{thm:main}, one obtains a new local functional equation. 
In \cite[Chap.\ 16]{Faraut-Koranyi}, Faraut and Koranyi proved that, starting from a representation of a Euclidean Jordan algebra, one can construct polynomials satisfying local functional equations (see also Clerc \cite{Clerc}). 
Theorem \ref{thm:main} generalizes their result (see \cite[\S 2.2]{F.Sato4}). 

The Faraut-Koranyi construction is especially interesting in the case of the simple Euclidean Jordan algebra of rank 2, since the polynomials $\tilde P$ obtained in this case are {\it not\/} relative invariants of prehomogeneous vector spaces
except for some low-dimensional cases, as is noticed in \cite{Clerc} (without specifying the low-dimensional exceptions explicitly). 
Let us explain this non-prehomogeneous example without referring to Jordan algebra. 
Let $V$ be the $q+1$-dimensional real quadratic space of signature $(1,q)$. 
We fix a basis $\{e_0,e_1,\ldots,e_q\}$ of $V$, for which the quadratic form is given by 
\[
P(x_0,x_1,\ldots,x_q)=x_0^2-x_1^2-\cdots-x_q^2.
\]
Denote by $C_q$ the Clifford algebra of the positive definite quadratic form $x_1^2+\cdots+x_q^2$ 
and consider a representation $S:C_q \rightarrow M(m;\R)$ of $C_q$ on an 
$m$-dimensional $\R$-vector space. 
We may assume that $S_i:=S(e_i)$ $(i=1,\ldots,q)$ are symmetric matrices.  
We denote by $W=\R^m$ the representation space of $S$ 
and define a quadratic mapping $Q:W \rightarrow V$ by 
\[
Q(w) = ({}^tw w)e_0 + \sum_{i=1}^q S_i [w] e_i.
\] 
Then, if $\tilde P(w)=P(Q(w))=({}^tw w)^2 - \sum_{i=1}^q (S_i [w] )^2$ does not vanish identically, 
$Q$ is a self-dual non-degenerate quadratic mapping and, by Theorem \ref{thm:main}, 
$\tilde P$ satisfies a local functional equation.  
In the next section, we generalize this construction by classifying the non-degenerate self-dual quadratic mappings to real non-degenerate quadratic spaces of arbitrary signature. 

\begin{remark} 
(1) In \cite{Ishi}, Ishi proved that, if $V$ is the underlying vector space of a semisimple (not necessarily Euclidean) Jordan algebra, $P$ is the determinant of the Jordan algebra and $Q:W \rightarrow V$ is a self-dual non-degenerate quadratic mapping, then the mapping $v \mapsto Q_v^*$ induces a representation of the Jordan algebra $V$. 

(2) In \cite{F.Sato6}, we obtained another local functional equations of non-prehomogeneous type in a different setting, which is motivated by the generalized Wishart distributions in multivariate statistics. 
\end{remark}

%%%%%%%%%%%%%%%%%%%%%%%%%%%% Section 2 %%%%%%%%%%%%%%%%%%%%%%%%%
\section{Local functional equations of Clifford quartic forms} 

\subsection{Self-dual quadratic mappings and representations of Clifford algebras}

Let $p,q$ be non-negative integers, $V$ a real $p+q$-dimensional vector space and consider a quadratic form $P(v)$ of signature $(p,q)$. 
Fix a basis $\{e_1,\ldots,e_{p+q}\}$, which is called the standard basis, satisfying 
\[
P\left(\sum_{i=1}^{p+q} v_i e_i\right)=\sum_{i=1}^p v_i^2 - \sum_{j=1}^q v_{p+j}^2.
\]
We identify $V$ with $\R^{p+q}$ with the standard basis, and also with its dual vector space via the standard inner product 
$(x,y)=x_1y_1+\cdots+x_{p+q}y_{p+q}$. 
Put $\Omega=V\setminus\{P=0\}$. 
We determine the quadratic mappings $Q:W \rightarrow V$ that are self-dual with respect to 
the biregular mapping $\phi: \Omega \longrightarrow  \Omega$ defined by  
\[
\phi(v) := \frac 12 \mathrm{grad}\log P(v) = \frac{1}{P(v)}(v_1,\ldots,v_p,-v_{p+1},\ldots,-v_{p+q}). 
\]

By a quadratic mapping $Q$ of $W=\R^m$ to $V=\R^{p+q}$, we mean a mapping defined by 
\[
Q(w)=(S_1 [w],\ldots,  S_{p+q} [w])
\]
for some real symmetric matrices $S_1,\ldots,S_{p+q}$ of size $m$. 
For $v=(v_1,\dots,v_{p+q}) \in \R^{p+q}$, we put 
\[ 
S(v)=\sum_{i=1}^{p+q} v_i S_i, 
\]
then, by definition, the mapping $Q$ is self-dual with respect to $\phi$ if and only if 
\[
S(v)S(\phi(v))=1_m \quad (v \in \Omega). 
\]
If we put
\begin{equation}
\label{eqn:sign}
\varepsilon_i = \begin{cases} 
                     1 & (1 \leq i \leq p), \\
                     -1 & (p+1 \leq i \leq p+q),
                    \end{cases}
\end{equation}
this condition is 
equivalent to the polynomial identity
\[
\sum_{i=1}^p v_i^2 S_i^2 - \sum_{j=1}^q v_{p+j}^2 S_{p+j}^2 
 + \sum_{1\leq i<j \leq p+q} v_iv_j \left(\varepsilon_j S_iS_j+\varepsilon_i S_jS_i\right)
= P(v)1_m. 
\]
This identity holds if and only if 
\begin{eqnarray}
\label{eqn:clifford cond1}
S_i^2 &=& 1_m \quad  (1 \leq i \leq p+q), \\
\label{eqn:clifford cond2}
S_iS_j&=& \begin{cases} 
                S_jS_i & (1\leq i \leq p < j \leq p+q \ \text{or}\ 1\leq j \leq p < i \leq p+q )\\
                -S_jS_i & (1\leq i,j \leq p\ \text{or}\ p+1 \leq i,j \leq p+q). 
               \end{cases}.
\end{eqnarray}
This means that the linear map $S:V \rightarrow {\rm Sym}(m;\R)$ can be extended to a representation of the tensor product of the Clifford algebra $C_p$ of $v_1^2+\cdots+v_p^2$ and 
the Clifford algebra $C_q$ of $v_{p+1}^2+\cdots+v_{p+q}^2$. 

Conversely, if we are given a representation $\rho:C_p \otimes C_q \rightarrow M(m;\R)$, 
then we can obtain a self-dual quadratic mapping $Q:W=\R^m \rightarrow V$. 
Indeed, the images of the standard basis 
$S_1=\rho(e_1),\ldots,S_{p+q}=\rho(e_{p+q})$ satisfy the relations above. 
(We always identify $e_i\otimes 1$ (resp.\ $1 \otimes e_i$) with $e_i$ for $1 \leq i \leq p$ (resp.\ $p+1 \leq i \leq p+q$).)
Moreover, since $C_p\otimes C_q$ is semisimple, $\rho$ is a direct sum of irreducible representations. 
Any  irreducible representation of  $C_p\otimes C_q$ is a tensor product of an irreducible representation of $C_p$ and an irreducible representation of $C_q$. 
Hence,  $S_i$ is of the form $(\rho_1\otimes \rho'_1)(e_i) \perp \cdots \perp (\rho_r\otimes \rho'_r)(e_i)$  for some representations $\rho_1,\ldots,\rho_r$ of $C_p$ and some representations $\rho'_1,\ldots,\rho'_r$ of $C_q$. 
Therefore, by the following lemma (applied to the positive definite case), we may take symmetric matrices as $S_1,\ldots,S_{p+q}$ (by taking conjugate, if necessary) and then the mapping $Q(w)=(S_1[w],\ldots,S_{p+q}[w])$ is self-dual.   

\begin{lem}
\label{lem:finite group}
Let $P$ be a quadratic form on $V={\Bbb R}^{p+q} $ of signature $(p,q)$ and let $e_1, \dots , e_{p+q} $ be the standard basis of $V$ such that $P( \displaystyle{\sum_{i=1}^{p+q}  x_i e_i )= \sum_{i=1}^p x_i^2 - \sum_{j=1}^q x_{p+j}^2 .}$ Denote by $C_{p,q}$ the Clifford algebra of the quadratic form $P$ and let $\rho : C_{p,q} \rightarrow M(m;{\Bbb R})$ be a representation of $C_{p,q}.$ 
Then, in the equivalence class of $\rho$, there exists a representation with the property that $\rho(e_i )$ is a symmetric matrix for $1 \leq i \leq p $ and a skew-symmetric matrix for $p+1 \leq i \leq p+q.$
\end{lem}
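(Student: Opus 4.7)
The plan is an averaging argument over a finite matrix group, followed by a conjugation by the positive square root of a $G$-invariant quadratic form. Write $A_i=\rho(e_i)$; from the defining Clifford relations for $C_{p,q}$ one has $A_i^2=\varepsilon_i 1_m$ with $\varepsilon_i$ as in $(\ref{eqn:sign})$, together with $A_iA_j=-A_jA_i$ for $i\ne j$. Reducing arbitrary words via anticommutation and $A_i^2=\pm 1_m$, the subgroup $G$ of invertible matrices in $M(m;\R)$ generated by $\pm A_1,\ldots,\pm A_{p+q}$ is finite, with every element of the form $\pm A_{i_1}\cdots A_{i_k}$, $i_1<\cdots<i_k$.

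Next I form the symmetric positive definite matrix
\[
T=\sum_{g\in G}{}^t g\,g.
\]
Because right multiplication by any $g_0\in G$ permutes $G$, the substitution $h\mapsto hg_0$ in the sum shows ${}^t g_0\,T\,g_0=T$, and hence ${}^t g_0=T g_0^{-1}T^{-1}$, for every $g_0\in G$. Taking $g_0=A_i$ and using $A_i^{-1}=\varepsilon_i A_i$ gives the key identity
\[
{}^t A_i=\varepsilon_i\,T A_i T^{-1}\qquad(1\le i\le p+q).
\]

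To finish, set $U=T^{1/2}$, the unique symmetric positive definite square root of $T$, and pass to the equivalent representation $\rho'(x)=U\rho(x)U^{-1}$. A short calculation, using ${}^tU=U$, $U^{-1}T=U$, and $T^{-1}U=U^{-1}$, and then applying the identity above, shows that $A_i':=U A_iU^{-1}$ satisfies ${}^t A_i'=\varepsilon_i A_i'$. Thus $A_i'$ is symmetric for $1\le i\le p$ and skew-symmetric for $p+1\le i\le p+q$, as required. The only conceptual point is the averaging construction of $T$; once finiteness of $G$ and existence of the symmetric square root are in hand, the remaining verifications are formal, so I do not expect any genuine obstacle beyond bookkeeping.
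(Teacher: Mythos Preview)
Your proof is correct and follows essentially the same approach as the paper: both arguments observe that $\{-1,e_1,\ldots,e_{p+q}\}$ generates a finite group, then use the Weyl averaging trick to conjugate the representation into one by orthogonal matrices, after which the symmetry/skew-symmetry of $\rho(e_i)$ follows immediately from $\rho(e_i)^{-1}=\varepsilon_i\rho(e_i)$. The only difference is that the paper invokes the averaging step as a known fact (``we may assume that every element in $G$ is represented by an orthogonal matrix''), whereas you spell it out explicitly via $T=\sum_{g\in G}{}^tg\,g$ and conjugation by $T^{1/2}$.
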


\begin{proof} 
By the definition of the Clifford algebra $C_{p,q}$, we have 
$$e_i^2=1 \quad ( 1 \leq i \leq p), \quad e_i^2 =-1 \quad ( p+1 \leq i \leq p+q ) , \quad e_i e_j =-e_i e_j \quad (i \neq j ).$$
Hence, the multiplicative group $G$ generated by $\{ -1, e_1, \dots, e_{p+q} \}$ is a finite group and $\rho$ gives a group-representation of $G$ on ${\Bbb R}^m$. Therefore, if we replace $\rho$  by an equivalent representation if necessary, we may assume that every element in $G$ is represented by an orthogonal matrix. Then $\rho(e_i)=\rho(e_i)^{-1} ={}^t \rho(e_i) $ for $1 \leq i \leq p$, and $\rho(e_i)=-\rho(e_i)^{-1}=-{}^t \rho(e_i)$ for $p+1 \leq i \leq p+q$. 
\end{proof}

Thus we have proved the following theorem on the correspondence between self-dual quadratic mappings and representations of $C_p\otimes C_q$. 
 
\begin{thm}
\label{thm:corresponds to tensor prod of Clif}
Self-dual quadratic mappings $Q$ of $W=\R^m$ to the quadratic space $(V,P)$ 
correspond to representations $\rho$ of $C_p \otimes C_q$ such that $\rho(V)$ is contained in ${\rm Sym}(m;\R)$.  
\end{thm}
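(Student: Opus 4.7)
My plan is to prove both directions separately, leveraging the discussion already developed before the theorem statement.

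For the forward direction, I would start with a self-dual quadratic mapping $Q : W \to V$. Since every quadratic mapping to $\R^{p+q}$ is of the form $Q(w) = (S_1[w], \ldots, S_{p+q}[w])$ for a unique tuple of real symmetric matrices $S_1, \ldots, S_{p+q}$, the substance of the argument lies in translating self-duality into algebraic relations. I would write $S(v) = \sum_i v_i S_i$ and interpret the self-duality condition $S(v) S(\phi(v)) = 1_m$ (for $v \in \Omega$) as a polynomial identity in $v$; this identity, clearing denominators via $\phi(v) = P(v)^{-1}(v_1, \ldots, v_p, -v_{p+1}, \ldots, -v_{p+q})$, becomes $S(v) \cdot \sum_i \varepsilon_i v_i S_i = P(v) 1_m$ with $\varepsilon_i$ as in (\ref{eqn:sign}). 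Comparing the coefficients of $v_i^2$ and of $v_i v_j$ for $i \ne j$ produces exactly the relations (\ref{eqn:clifford cond1}) and (\ref{eqn:clifford cond2}). The relations $S_i^2 = 1_m$, together with $S_i S_j = -S_j S_i$ within each block and $S_i S_j = S_j S_i$ between blocks, are precisely the defining relations of $C_p \otimes C_q$, so $S$ extends uniquely to an algebra homomorphism $\rho : C_p \otimes C_q \to M(m;\R)$ with $\rho(V) \subset \mathrm{Sym}(m;\R)$.

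For the converse direction, I would start from a representation $\rho : C_p \otimes C_q \to M(m;\R)$ and need to produce a self-dual $Q$, after possibly replacing $\rho$ by an equivalent representation so that $\rho(V) \subset \mathrm{Sym}(m;\R)$. Once the symmetry of $S_i := \rho(e_i)$ is in place, the relations of $C_p \otimes C_q$ directly yield the polynomial identity above, hence the self-duality of $Q(w) = (S_1[w], \ldots, S_{p+q}[w])$. The key point is therefore arranging the generators $\rho(e_1), \ldots, \rho(e_{p+q})$ to be simultaneously symmetric.

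To achieve this, I would use semisimplicity of $C_p \otimes C_q$ (valid since $C_p$ and $C_q$ are semisimple real algebras) to decompose $\rho$ into a direct sum of irreducible components, each of which has the form $\rho_1 \otimes \rho_2$ with $\rho_1$ an irrep of $C_p$ and $\rho_2$ an irrep of $C_q$. Applying Lemma \ref{lem:finite group} to the positive definite Clifford algebras $C_p$ and $C_q$ individually (i.e.\ in the cases $(p,0)$ and $(q,0)$), I may assume each $\rho_i(e_j)$ is symmetric. Then in the tensor product the images of $e_j \otimes 1$ and $1 \otimes e_j$, being Kronecker products of a symmetric matrix with an identity, are again symmetric; taking block direct sums preserves the property.

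The only genuine subtlety is the symmetrization step: one must remember to apply Lemma \ref{lem:finite group} to the two positive definite Clifford algebras separately rather than to a hypothetical $C_{p+q}$ or $C_{p,q}$, since in the latter cases the lemma would force skew-symmetry on part of the generators. Once this point is handled cleanly, everything else is bookkeeping.
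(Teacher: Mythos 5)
Your proposal is correct and follows essentially the same route as the paper: translate self-duality into the polynomial identity $S(v)\sum_i\varepsilon_i v_iS_i=P(v)1_m$, read off the relations $(\ref{eqn:clifford cond1})$--$(\ref{eqn:clifford cond2})$ defining $C_p\otimes C_q$, and conversely use semisimplicity to reduce to tensor products of irreducibles and Lemma \ref{lem:finite group} in the positive definite case to symmetrize the generators. Your closing remark about applying the lemma to $C_p$ and $C_q$ separately (rather than to $C_{p,q}$) matches the paper's parenthetical ``applied to the positive definite case.''
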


We call the symmetric matrices $S_1=\rho(e_1),\ldots,S_{p+q}=\rho(e_{p+q})$ the {\it basis matrices} of $\rho$. 

\begin{remark}
The construction above is a generalization of the quadratic mappings obtained from  representations of simple Euclidean Jordan algebra of rank 2 in the theory of Faraut-Koranyi \cite{Faraut-Koranyi}.  
In the case $(p,q)=(1,q)$, we have $C_1 \cong \R \oplus \R$ and $C_1 \otimes C_q \cong C_q \oplus C_q$. 
Hence representations of  $C_1 \otimes C_q$ can be identified with the direct sum 
of two $C_q$-modules $W_+$ and $W_-$. On $W_+$ (resp.\ $W_-$), $e_1$ acts as multiplication by $+1$ (resp. $-1$). 
The quadratic mappings given by the Faraut-Koranyi construction correspond 
to representations of $C_1 \otimes C_q$ for which $W_-=\{0\}$.    
\end{remark}

We put $R_{p,q}=C_p\otimes C_q$. We denote by $R_{p,q}^+$ the subalgebra of $R_{p,q}$ consisting of all the even elements, namely, the subalgebra generated by $e_ie_j$ $(1 \leq i < j \leq p+q)$. 

It is well-known that the structure of $C_p$ depends on $p \bmod 8$ as the following lemma shows (see \cite{Ono}, \cite{Porteous}, \cite{Varadarajan}): 
\begin{lem}
\label{lem:pos def Clif}
The structure of $C_p$ is given by the following table:
\[
\def\arraystretch{1.3}
\begin{array}{|c|c|}
\hline
p & C_p \\\hline 
p \equiv 0 \pmod 8 & M(2^{p/2} ; {\Bbb R}) \\\hline
 p \equiv 1 \pmod 8 & M(2^{(p-1)/2} ; {\Bbb R}) \oplus M(2^{(p-1)/2} ; {\Bbb R}) \\\hline
 p \equiv 2 \pmod 8 & M(2^{p/2} ; {\Bbb R}) \\\hline
 p \equiv 3 \pmod 8 & M(2^{(p-1)/2} ; {\Bbb C}) \\\hline
 p \equiv 4 \pmod 8 & M(2^{(p-2)/2} ; {\Bbb H}) \\\hline
 p \equiv 5 \pmod 8 & M(2^{(p-3)/2} ; {\Bbb H}) \oplus M(2^{(p-3)/2} ; {\Bbb H}) \\\hline
 p \equiv 6 \pmod 8 & M(2^{(p-2)/2} ; {\Bbb H}) \\\hline
 p \equiv 7 \pmod 8 & M(2^{(p-1)/2} ; {\Bbb C}) \\\hline 
\end{array}
\] 
\end{lem}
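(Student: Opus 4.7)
The plan is to combine direct matrix computations for small $p$ with the classical periodicity $C_{p+8}\cong C_p\otimes_\R M(16;\R)$, which reduces the entire table to the eight residues $p\in\{0,1,\dots,7\}$. First I would handle the base cases: $C_0=\R$ trivially; $C_1\cong\R\oplus\R$ via the orthogonal idempotents $(1\pm e_1)/2$; and $C_2\cong M(2;\R)$ via
\[
e_1\longmapsto \begin{pmatrix}1&0\\0&-1\end{pmatrix},\quad
e_2\longmapsto \begin{pmatrix}0&1\\1&0\end{pmatrix},
\]
which is an isomorphism by comparing dimensions since $\dim_\R C_2\leq 4$.

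Next I would analyze the volume element $\omega_p:=e_1\cdots e_p$. A direct calculation yields $\omega_p^2=(-1)^{p(p-1)/2}$ and $\omega_pe_i=(-1)^{p-1}e_i\omega_p$, so $\omega_p$ is central precisely when $p$ is odd. One then reads off the center: $Z(C_p)\cong\R$ when $p$ is even, $\R\oplus\R$ when $p\equiv 1\pmod 4$ (central idempotents $(1\pm\omega_p)/2$), and $\C$ when $p\equiv 3\pmod 4$. Combined with $\dim_\R C_p=2^p$, Wedderburn's theorem restricts $C_p$ to a short list of possibilities in each residue class; for $p=3,4,5,6,7$ I would pin down the correct division ring by constructing an explicit faithful representation, for example $C_3\cong M(2;\C)$ using the Pauli matrices and $C_4\cong M(2;\H)$ using the quaternion units acting on $\H^2$ together with an anticommuting involution.

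Finally I would establish the 8-fold periodicity via the two mixing isomorphisms
\[
C_{p+2}\cong C_{0,p}\otimes_\R M(2;\R),\qquad C_{0,p+2}\cong C_p\otimes_\R\H,
\]
where $C_{0,p}$ denotes the Clifford algebra of the negative definite form. Each is proved by exhibiting $p+2$ generators inside the tensor product: in the first, old generators $\gamma_\alpha$ get sent to $\gamma_\alpha\otimes\tau$ (with $\tau\in M(2;\R)$ satisfying $\tau^2=-I$) and two new generators come from $1\otimes\sigma_1$, $1\otimes\sigma_3$; in the second, one sends old generators to $\gamma_\alpha\otimes\mathbf{i}$ and takes $1\otimes\mathbf{j}$, $1\otimes\mathbf{k}$ as new generators. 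The Clifford relations are then checked directly, and the universal property together with a dimension count gives the isomorphism. Iterating these four times and collapsing via $\H\otimes_\R\H\cong M(4;\R)$ yields $C_{p+8}\cong C_p\otimes_\R M(16;\R)$, which propagates the table. The main obstacle is the sign bookkeeping in the mixing isomorphisms—choosing generators so that positive and negative squares get swapped appropriately—but this becomes routine once one commits to specific Pauli-type matrices.
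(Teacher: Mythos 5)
The paper does not prove this lemma at all; it is quoted as a well-known classical fact with references to Ono, Porteous and Varadarajan. Your outline is exactly the standard argument found in those sources: direct computation of $C_0,C_1,C_2$, the volume element $\omega_p=e_1\cdots e_p$ with $\omega_p^2=(-1)^{p(p-1)/2}$ to read off the center (hence $\R$, $\R\oplus\R$ or $\C$ according to $p\bmod 4$), and the two mixing isomorphisms $C_{p+2}\cong C_{0,p}\otimes M(2;\R)$, $C_{0,p+2}\cong C_p\otimes\H$, which compose (using $\H\otimes_\R\H\cong M(4;\R)$) to give $C_{p+8}\cong C_p\otimes M(16;\R)$ and hence the eight-fold periodicity. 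The sign checks you indicate all work out, and the only points worth making explicit in a written version are the semisimplicity of $C_p$ (needed before invoking Wedderburn; it follows from the finiteness of the group generated by $\{-1,e_1,\dots,e_p\}$, as the paper itself uses in Lemma 2.1) and the determination of the division ring for the two isomorphic simple factors when $p\equiv 1\pmod 4$ (e.g.\ $p=5$), which your mixing isomorphisms settle cleanly via $C_5\cong C_1\otimes\H\otimes M(2;\R)$.
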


The structure of $R_{p,q}$ and $R^+_{p,q}$ is easily seen from Lemma \ref{lem:pos def Clif}.

\begin{lem}
\label{lem:T and T'}
Put $n=p+q$. Then the structure of $R_{p,q}$ and $R_{p,q}^+$ is given by the following table:
\[
\def\arraystretch{1.3}
\begin{array}{|c|c|c|c|c|c|}
\hline 
\text{Type} & (R_{p,q}, R_{p,q}^+) & \ell & r & (\mathbb K, \mathbb K') & \{p \bmod 8, q \bmod 8\} \\
\hline 
 &  & 2^{n/2} & 2^{n/2-1} & (\R,\C) &  \{0,2\}, \{4,6\} \\
\cline{3-6}
\smash{\lower3mm\hbox{I}}  & \smash{\lower3mm\hbox{$(T, T')$}}  & 2^{(n-1)/2} & 2^{(n-1)/2} & (\C,\R) & \{0,7\}, \{2,3\}, \{3,4\}, \{6,7\} \\
\cline{3-6}
     &   & 2^{(n-1)/2} & 2^{(n-1)/2-1}  &   (\C,\mathbb H) &  \{0,3\}, \{2,7\}, \{3,6\}, \{4,7\} \\
\cline{3-6}
   &   & 2^{n/2-1} & 2^{n/2-1} &   (\mathbb H,\C) &  \{0,6\}, \{2,4\} \\
\hline
\smash{\lower3mm\hbox{II}}  & \smash{\lower3mm\hbox{$(T,2T')$}}  & 2^{n/2} & 2^{n/2-1} &   (\R,\R) &  \{0,0\}, \{2,2\}, \{4,4\}, \{6,6\} \\
\cline{3-6}
   &   & 2^{n/2-1} & 2^{n/2-2} &   (\mathbb H,\mathbb H)  &  \{0,4\}, \{2,6\} \\
\hline
   &    & 2^{(n-1)/2} & 2^{(n-1)/2} &   (\R,\R) &  \{0,1\}, \{1,2\}, \{4,5\}, \{5,6\} \\
\cline{3-6}
III & (2T,T')  & 2^{n/2-1} & 2^{n/2-1} &   (\C,\C) &  \{1,3\}, \{1,7\}, \{3,5\}, \{5,7\} \\
\cline{3-6}
   &   & 2^{(n-3)/2} & 2^{(n-3)/2} &   (\mathbb H,\mathbb H) &  \{0,5\}, \{1,4\}, \{1,6\}, \{2,5\} \\
\hline
\smash{\lower3mm\hbox{IV}}  & \smash{\lower3mm\hbox{$(2T,2T')$}}  & 2^{n/2-1} & 2^{n/2-1} &   (\C,\R) & \{3,3\}, \{7,7\} \\
\cline{3-6}
   &   & 2^{n/2-1} & 2^{n/2-2} &   (\C,\mathbb H) &  \{3,7\} \\
\hline
\smash{\lower3mm\hbox{V}}  & \smash{\lower3mm\hbox{$(4T,2T')$}}  & 2^{n/2-1}  &  2^{n/2-1} &   (\R,\R) & \{1,1\}, \{5,5\} \\
\cline{3-6}
   &    & 2^{n/2-2} & 2^{n/2-2} &   (\mathbb H,\mathbb H) &  \{1,5\} \\
\hline
\end{array}
\]
where $T$ (resp.\ $T'$) denotes the matrix algebra $M(\ell;\mathbb K)$ (resp.\  $M(r;\mathbb K')$),  and $2T$ (resp.\ $2T'$, $4T$) denotes $T \oplus T$ (resp.\ $T' \oplus T'$, $T \oplus T \oplus T \oplus T$). 
\end{lem}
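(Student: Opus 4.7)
The proof proceeds by direct computation. First I would derive the structure of $R_{p,q}=C_p\otimes_\R C_q$ from Lemma~\ref{lem:pos def Clif} by substituting the matrix-algebra description of each factor and applying the standard reductions
\[
M(a;\R)\otimes_\R M(b;\mathbb K)\cong M(ab;\mathbb K),\quad
M(a;\C)\otimes_\R M(b;\C)\cong M(ab;\C)\oplus M(ab;\C),
\]
\[
M(a;\C)\otimes_\R M(b;\H)\cong M(2ab;\C),\quad
M(a;\H)\otimes_\R M(b;\H)\cong M(4ab;\R),
\]
together with distributivity of $\otimes_\R$ over direct sums. The ``Type'' column then corresponds exactly to the number of simple summands produced: Types~I and II give a single simple algebra (both $C_p$ and $C_q$ are simple), Types~III and IV give two summands (exactly one of $C_p, C_q$ splits as $A\oplus A$), and Type~V gives four summands (both split, with all four summands isomorphic to the same $T$). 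Since Lemma~\ref{lem:pos def Clif} is periodic mod~$8$, the enumeration reduces to a finite list of residue pairs $(p\bmod 8,q\bmod 8)$, which are then sorted into the thirteen rows of the table by matching $(\mathbb K,\mathbb K')$ and $\ell$.

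For the even subalgebra $R_{p,q}^+$ I would use the $\Z/2$-grading decomposition
\[
R_{p,q}^+=(C_p^+\otimes_\R C_q^+)\oplus(C_p^-\otimes_\R C_q^-),
\]
where $C_r^\pm$ denote the even and odd parts, combined with the classical identification of $C_r^+$ with the Clifford algebra of the negative-definite quadratic form in $r-1$ variables; the latter has a mod-$8$ structure tabulated in parallel with Lemma~\ref{lem:pos def Clif}. The odd-odd piece is isomorphic to the even-even piece as a left $C_p^+\otimes C_q^+$-module via left multiplication by a product $e_i\otimes e_j$ with $1\leq i\leq p<j\leq p+q$, and this pairing reassembles $R_{p,q}^+$ into a matrix algebra (or a direct sum of two matrix algebras) of the stated $(\mathbb K',r)$. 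A more uniform alternative is to work with the pseudoscalar $\omega=e_1e_2\cdots e_{p+q}\in R_{p,q}^+$: whether $\omega^2=\pm 1$ and whether $\omega$ is central in $R_{p,q}$ depends only on $p,q\bmod 4$, and combined with the already-known structure of $R_{p,q}$ this pins down the structure of $R_{p,q}^+$ in every row.

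The main obstacle is bookkeeping rather than any single hard step: the $64$ residue pairs $(p,q)\bmod 8$ must be reorganized into the thirteen rows listed, and in each row both the coefficient fields and the ranks must be matched. There is no conceptual shortcut beyond the mod-$8$ periodicity and the unordered-pair symmetry $\{p,q\}$ visible in the last column of the table, but together these reduce the verification to an entirely mechanical finite check. A useful sanity test in each row is that $\dim_\R R_{p,q}^+=2^{p+q-1}=\tfrac12\dim_\R R_{p,q}$ must agree with the dimension read off from the table.
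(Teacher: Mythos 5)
Your proposal is correct, and it is essentially the computation the paper intends: the paper offers no proof of this lemma beyond the remark that it "is easily seen from Lemma \ref{lem:pos def Clif}," and the standard tensor-product identities for $M(a;\mathbb K)\otimes_\R M(b;\mathbb K')$ together with the mod-$8$ periodicity are exactly the omitted mechanical check. Your treatment of $R_{p,q}^+$ via the grading decomposition and the pseudoscalar is equivalent to the route the paper hints at (the isomorphism $R_{p,q}^+\cong C_{p,q}^+$ noted just after the lemma, combined with the classical structure of even Clifford algebras), and your dimension sanity check $\dim_\R R_{p,q}^+=2^{n-1}$ is a sound safeguard for the bookkeeping.
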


The number of inequivalent irreducible representations of $R_{p,q}$ is equal to the number of simple components, namely, $1$ for type I and II , $2$ for type III and IV, and $4$ for type V.  
The dimension over $\R$ of the irreducible representations of $R_{p,q}$ is given by $\ell \dim_\R \mathbb K$, which is a power of $2$. 
As in Lemma \ref{lem:finite group}, we denote by $C_{p,q}$ the Clifford algebra of the quadratic form $P$  and by $C_{p,q}^+$ the subalgebra of $C_{p,q}$ of even elements. 
Then the algebra $R_{p,q}^+$ is isomorphic to $C_{p,q}^+$, while $R_{p,q}$ is not necessarily isomorphic to $C_{p,q}$. 
The isomorphism of $R_{p,q}^+$ to $C_{p,q}^+$ is given by $e_ie_j \mapsto \varepsilon_j\tilde e_i\tilde e_j$, where $\tilde e_i$ denotes the element $e_i \in V$ viewed as an element in $C_{p,q}$ and $\varepsilon_i$'s are the same as in $(\ref{eqn:sign})$. 
 
Let $\gerk_{p,q}$ be the real vector space spanned by $e_ie_j$ $(1 \leq i < j \leq p+q)$. 
Then $\gerk_{p,q}$ is a Lie subalgebra of $R_{p,q}$ with bracket product $[X,Y]:=XY-YX$.

\begin{lem}
\label{lem:equivariance}
The Lie algebra $\gerk_{p,q}$ is isomorphic to 
\[
\gers\gero(p,q)=\set{X\in M({p+q};\R)}{{}^tX1_{p,q}+1_{p,q}X=0}
\]
and a representation $\rho$ of $R_{p,q}$ on a vector space $W$ induces a Lie algebra representation of $\gerk_{p,q}$, which is a direct sum of  real (half-) spin representations of $\gers\gero(p,q)$. 
The self-dual quadratic map $Q:W \rightarrow V$ corresponding to $\rho$ is $\mathrm{Spin}(p,q)$-equivariant. 
Here, for $p+q \geq 3$, $\mathrm{Spin}(p,q)$ denotes the real spin group, which is a double covering group of the orthogonal group $\mathrm{SO}(p,q)$. For $p+q=2$, we understand that $\mathrm{Spin}(p,q) = \mathrm{SO}(p,q)$. 
(If $p+q=1$, then $\gerk_{p,q}=\{0\}$ and there is nothing to prove.) 
\end{lem}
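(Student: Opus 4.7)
The plan is to reduce everything to classical spin representation theory via the algebra isomorphism $R_{p,q}^+\cong C_{p,q}^+$, $e_ie_j\mapsto\varepsilon_j\tilde{e}_i\tilde{e}_j$, recalled just before the statement. Under this isomorphism, $\gerk_{p,q}$ is carried onto $\mathrm{span}\{\tilde{e}_i\tilde{e}_j : i<j\}\subset C_{p,q}^+$, which is well known to form a Lie subalgebra isomorphic to $\gers\gero(p,q)$ via the twisted adjoint action on $V\subset C_{p,q}$, each $\tfrac12\tilde{e}_i\tilde{e}_j$ corresponding to the infinitesimal rotation in the $(i,j)$-plane.

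Alternatively, I would verify this directly inside $R_{p,q}$ using (2.2) and (2.3). First I would observe that when $\{i,j\}\cap\{k,l\}=\emptyset$, the sign picked up in commuting $e_ke_l$ past $e_ie_j$ factors as $\sigma_k\sigma_l$, where $\sigma_m=\epsilon_{im}\epsilon_{jm}$ with $\epsilon_{ab}=-1$ iff $a,b$ lie in the same Clifford factor; a short case check shows that $\sigma_m$ depends only on the pair $(i,j)$, so $\sigma_k=\sigma_l$ and hence $[e_ie_j,e_ke_l]=0$. The only non-trivial case is $[e_ie_j,e_je_l]=2e_ie_l$ for pairwise distinct $i,j,l$, independently of the groupings. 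These match, up to the factor $2$, the structure constants of $\gers\gero(p,q)$ in the basis $L_{ij}=\varepsilon_iE_{ij}-\varepsilon_jE_{ji}$, so $\psi\colon e_ie_j\mapsto 2L_{ij}$ is a Lie algebra isomorphism. The (half-)spin decomposition of $\rho|_{\gerk_{p,q}}$ then follows from the semisimplicity of $R_{p,q}$ (Lemma 2.4) together with the classical description of irreducible $C_{p,q}^+$-modules as full or half-spin representations.

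For the $\mathrm{Spin}(p,q)$-equivariance of $Q$, the plan is to prove it infinitesimally and then integrate. Expanding $dQ_w(\rho(e_ie_j)w)=2\sum_k({}^tw\,S_kS_iS_j\,w)\,e_k$, I would show the $k$-th coefficient vanishes for $k\notin\{i,j\}$ by verifying $S_kS_iS_j+S_jS_iS_k=0$; since $S_jS_iS_k=\epsilon_{ki}\epsilon_{kj}\epsilon_{ji}\,S_kS_iS_j$, and a case analysis on the groupings of $i,j,k$ gives $\epsilon_{ki}\epsilon_{kj}\epsilon_{ji}=-1$ in every case, this vanishing holds. The surviving $k=i,j$ terms assemble into $2S_j[w]\,e_i-2\varepsilon_i\varepsilon_jS_i[w]\,e_j=\psi(e_ie_j)\cdot Q(w)$, confirming infinitesimal equivariance. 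The main technical point will be lifting this to a group-level identity on all of $\mathrm{Spin}(p,q)$: I would realize $\mathrm{Spin}(p,q)$ inside $R_{p,q}^+$ through its standard Clifford realization, transported by the isomorphism $R_{p,q}^+\cong C_{p,q}^+$, exponentiate on the identity component to get equivariance there, and dispatch any remaining components using explicit Clifford generators for which equivariance can be read off directly from the algebra.
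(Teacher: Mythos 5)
Your overall strategy coincides with the paper's: an infinitesimal computation showing that $dQ_w(\rho(e_ie_j)w)$ lies in the span of $e_i,e_j$ with coefficients built from $S_i[w],S_j[w]$, an explicit identification of $\gerk_{p,q}$ with $\gers\gero(p,q)$, and the (half-)spin decomposition via $R_{p,q}^+\cong C_{p,q}^+$ together with semisimplicity. You are in fact more careful than the paper about integrating the infinitesimal identity to the (possibly disconnected) group $\mathrm{Spin}(p,q)$, which the paper dispatches in one sentence.

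There is, however, a concrete sign error in your explicit isomorphism. With $L_{ij}=\varepsilon_iE_{ij}-\varepsilon_jE_{ji}$ one computes $[L_{ij},L_{jl}]=\varepsilon_jL_{il}$, whereas in $\gerk_{p,q}$ you correctly find $[e_ie_j,e_je_l]=2e_ie_l$ with coefficient $+2$ for \emph{every} grouping of $i,j,l$. So the structure constants do not match once $q\geq 1$, and $\psi(e_ie_j)=2L_{ij}$ is not a Lie algebra homomorphism. The same mismatch surfaces in your equivariance identity: the surviving terms give $dQ_w(\rho(e_ie_j)w)=2S_j[w]\,e_i-2\varepsilon_i\varepsilon_jS_i[w]\,e_j$, which equals $\left(2E_{ij}-2\varepsilon_i\varepsilon_jE_{ji}\right)Q(w)=2\varepsilon_iL_{ij}\cdot Q(w)$, not $2L_{ij}\cdot Q(w)$; these differ whenever $\varepsilon_i=-1$. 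The repair is simply to set $f(e_ie_j)=2E_{ij}-2\varepsilon_i\varepsilon_jE_{ji}$ for $i<j$ (the paper's $X_{ij}$); one checks $[X_{ij},X_{jl}]=2X_{il}$, so with this normalization both the isomorphism onto $\gers\gero(p,q)$ and the infinitesimal equivariance go through, and the rest of your argument (commutation of disjoint pairs, vanishing of the $k\notin\{i,j\}$ coefficients, the spin decomposition, and the lift to the group) is sound.
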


\begin{proof}
For $Y \in \gerk_{p,q}$, we have 
\[
\left.\frac{d}{dt}S_k[\exp(t\rho(Y))w]\right|_{t=0} = ({}^t\rho(Y)S_k+S_k\rho(Y))[w] \quad (w \in W).
\]
In case $Y=e_ie_j$ $(i \ne j)$, the relations (\ref{eqn:clifford cond1}) and (\ref{eqn:clifford cond2}) imply that
\[
{}^t\rho(Y)S_k+S_k\rho(Y) 
 = {}^t(S_iS_j)S_k+S_k(S_iS_j)
 = \begin{cases}
     0 & (k \ne i,j), \\
     2S_j & (k=i), \\
    -2\varepsilon_i\varepsilon_j S_i & (k=j).
    \end{cases}
\]
This shows that a Lie algebra isomorphism $f$ of $\gerk_{p,q}$ onto $\gers\gero(p,q)$  is defined by 
\[
f(e_ie_j)=X_{ij}:=2E_{ij}-2\varepsilon_i \varepsilon_j E_{ji}\quad (1 \leq i < j \leq p+q)
\] 
($E_{ij}$ is the matrix unit) and $Q$ satisfies the identity
\[
\left.\frac{d}{dt}Q(\exp(t\rho(Y))w)\right|_{t=0} 
 = \left.\frac{d}{dt}\exp(tf(Y))Q(w)\right|_{t=0}. 
\]
This shows that $Q$ is $\mathrm{Spin}(p,q)$-equivariant. 
The representation of $\gerk_{p,q}$ on $W$ generates the representation $\rho$ of the algebra $R^+_{p,q}$ which is isomorphic to the even Clifford algebra $C^+_{p,q}$. 
Hence the  representation of $\gerk_{p,q}$ on $W$ is equivalent to a direct sum of real (half-) spin representations.   
\end{proof}

The following distinction between representations will play an important role later. 

\begin{definition}
\label{def:pure and mixed}
A representation $\rho$ of $R_{p,q}$ is called  {\it  pure}, if the restriction  $\rho^+:=\rho|_{R^+_{p,q}}$ of $\rho$ to $R_{p,q}^+$ is isotypic, namely, $\rho^+$ is a direct sum of several copies of an irreducible representation of $R^+_{p,q}$. 
If $\rho^+$ contains inequivalent irreducible representations of $R^+_{p,q}$, then $\rho$ is called {\it mixed}. 
If $\rho^+\otimes\C$ is isotypic as a representation of $R_{p,q}^+\otimes\C$, then $\rho$ is called {\it pure over\/} $\C$.    
Otherwise, it is called {\it mixed over\/} $\C$. 
A mixed $\rho$ is obviously mixed over $\C$. 
\end{definition}

%%%%%%%%%%%%%%%%%%%%%%%%%%%%%%%%%%%%%%%%%%%%%%%%%%%%%%%%%%%%%%%%%%%%%%%%%%%%%%%%%%%%%
%% Classification of  Non-degenerate case 

\subsection{Clifford quartic forms and classification of degenerate cases}

 Let $p,q $ be non-negative integers satisfying $p \geq q \geq 0$ and $ p+q \geq 1.$ Let  
$S_1, \dots , S_{p+q} $ be the basis matrices of an $m$-dimensional representation $\rho$ of $R_{p,q}$ and define the quadratic mapping $Q:W={\Bbb R}^m \rightarrow V={\Bbb R}^{p+q} $ by 
\begin{center}
$Q(w)=(S_1 [w], \dots , S_{p+q} [w]) \quad (w \in W).$
\end{center}
We put 
\begin{center}
$P(v)=\displaystyle{\sum_{i=1}^p v_i^2 -\sum_{i=p+1}^{p+q} v_i^2 , \quad \tilde{P}(w)=P (Q(w))= \sum_{i=1}^p S_i [w]^2 -\sum_{i=p+1}^{p+q} S_i [w]^2}.$
\end{center}
We call the polynomial $\tilde P(w)$ of degree $4$ the {\it Clifford quartic form\/} associated with $\rho$. 
The representation $\rho$ is called {\it non-degenerate\/} if $Q$ is non-degenerate in the sense of (A.2) in \S 1, and {\it degenerate\/} otherwise. 
By Theorem \ref{thm:main}, if $Q$ is non-degenerate, the complex power of the Clifford quartic form satisfies 
a local functional equation with explicit gamma factor. 
In this subsection we determine when $Q$ is non-degenerate. 

\begin{thm}
\label{thm:degenerate}
The following four conditions are equivalent:
\begin{enumerate}
\def\labelenumi{{\rm (\roman{enumi})}}
\item The quadratic mapping $Q$ is degenerate. 
\item The Clifford quartic form  $\tilde{P}(w)$ vanishes identically.
\item There exist linear forms $f_1 (w), \dots, f_{p+q} (w),$ not all of which are $0,$ satisfying the identity 
\begin{equation}
\label{eqn:2.4}
\sum_{i=1}^{p+q}  S_i [w] f_i (w) \equiv 0. 
\end{equation}
\item $(p,q,m)=(2,1,2),(3,1,4),(5,1,8),(9,1,16),(2,2,4),(3,3,8),(5,5,16)$, or $(p,q)=(1,1)$ with $S_1 = \pm S_2$. (Note that the condition $S_1 = \pm S_2$ for $(p,q)=(1,1)$ is equivalent to that the representation $\rho$ is pure.)
\end{enumerate}
\end{thm}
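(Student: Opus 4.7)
My plan is to establish the equivalences in the order (ii) $\Rightarrow$ (iii), (i) $\Leftrightarrow$ (ii), then (iv) $\Rightarrow$ (ii), and finally the classification (ii) $\Rightarrow$ (iv); the converse (iii) $\Rightarrow$ (ii) is then forced by the resulting classification. The soft implications rest on two computational tools: direct differentiation of $\tilde P$, and the Clifford-type relations (\ref{eqn:clifford cond1})--(\ref{eqn:clifford cond2}). The hard part is (ii) $\Rightarrow$ (iv), where I will need the structure theorem for $C_p \otimes C_q$-modules (Lemma \ref{lem:T and T'}) and the $\mathrm{Spin}(p,q)$-equivariance of $Q$ (Lemma \ref{lem:equivariance}).

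For (ii) $\Rightarrow$ (iii), I would differentiate $\tilde P(w) = \sum_i \varepsilon_i S_i[w]^2 \equiv 0$ with respect to $w_k$ and read off $\sum_i \varepsilon_i S_i[w] (S_i w)_k \equiv 0$. Setting $f_i(w) = 2\varepsilon_i (S_i w)_k$, one obtains a relation of the form (\ref{eqn:2.4}); some $k$ makes these linear forms not simultaneously zero, because $S_i^2 = 1_m$ forces $S_i \ne 0$. For (i) $\Rightarrow$ (ii) I argue by contradiction: assume $\tilde P \not\equiv 0$ and that the rank of $dQ_{w_0}$ drops below $n = p+q$ at some $w_0 \in \tilde{\mathbf\Omega}$, so there exists $c \ne 0$ with $S(c) w_0 := \sum_i c_i S_i w_0 = 0$. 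Multiplying this relation on the left by $S_k$ and using $S_k S_i = -\varepsilon_k \varepsilon_i S_i S_k$ for $i \ne k$ gives $c_k w_0 = \varepsilon_k \sum_{i\ne k} \varepsilon_i c_i S_i S_k w_0$. Substituting this expression for $c_k w_0$ back into itself and simplifying the Clifford sums via the cancellation $\varepsilon_i S_i S_j + \varepsilon_j S_j S_i = 0$ for $i \ne j$ yields $c_k^2 w_0 = -\varepsilon_k\bigl(\sum_{i \ne k} \varepsilon_i c_i^2\bigr) w_0$, whence $P(c) = 0$. A refinement of the same calculation, together with the duality $S(v) S(\phi(v)) = 1_m$ from Remark 1.1, shows that $Q(w_0)$ must be a scalar multiple of the null vector $\varepsilon c := (\varepsilon_1 c_1, \ldots, \varepsilon_{p+q} c_{p+q})$; hence $\tilde P(w_0) = P(Q(w_0)) = 0$, contradicting $w_0 \in \tilde{\mathbf\Omega}$.

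For (iv) $\Rightarrow$ (ii), the pure case $(p,q)=(1,1)$ is handled by noting that purity of $\rho$ on the two-element Clifford group gives $S_1 = \pm S_2$, so $\tilde P(w) = S_1[w]^2 - S_2[w]^2 = 0$. For each of the other seven triples $(p,q,m)$, $m$ is the real dimension of the minimal irreducible $R_{p,q}$-module by Lemma \ref{lem:T and T'}, and the identity $\tilde P \equiv 0$ is the classical Hurwitz-type composition identity; for example, for $(2,1,2)$ one may take $S_1 = \sigma_z, S_2 = \sigma_x, S_3 = 1_2$, giving $S_1[w]^2 + S_2[w]^2 = (w_1^2 + w_2^2)^2 = S_3[w]^2$, and analogous direct checks (or appeals to the norm identity of the relevant composition algebra) cover the remaining six triples.

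The main obstacle is (ii) $\Rightarrow$ (iv). I would decompose $\rho = \bigoplus_\alpha \rho_\alpha$ into irreducibles using Lemma \ref{lem:T and T'} and expand
\[
\tilde P(w) = \sum_\alpha \tilde P_\alpha(w_\alpha) + 2 \sum_{\alpha < \beta} \bigl(Q_\alpha(w_\alpha), Q_\beta(w_\beta)\bigr)_P,
\]
where $(\,,)_P$ is the bilinear form associated to $P$. Setting each $w_\alpha$ to zero in turn forces $\tilde P_\alpha \equiv 0$ for every $\alpha$, and the remaining identity then forces the images $Q_\alpha(W_\alpha)$ to be pairwise $P$-orthogonal in $V$. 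Since each nontrivial $Q_\alpha(W_\alpha)$ is a $\mathrm{Spin}(p,q)$-invariant subcone of the null cone $\{P=0\}$, and for $(p,q) \ne (1,1)$ with $p,q \ge 1$ the null cone (minus the origin) is irreducible as a $\mathrm{Spin}(p,q)^0$-variety, no two such invariant subcones can be mutually $P$-orthogonal; this forces $\rho$ to be irreducible. The remaining work is then an exhaustive case analysis over the types I--V of Lemma \ref{lem:T and T'}: for each irreducible $\rho$ of minimal dimension, comparing $\dim W$ with the $\mathrm{Spin}(p,q) \times H_{p,q}(\rho)$-invariant stratification of $V$ (and using the computation above that any rank drop of $dQ$ inside $\tilde{\mathbf\Omega}$ is impossible) isolates exactly the seven ``Hurwitz--Radon coincidences'' listed in (iv); the case $(p,q)=(1,1)$, where the null cone splits into two lines, is handled by the direct factorization $\tilde P = (S_1[w] - S_2[w])(S_1[w] + S_2[w])$. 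The finest technical point, and the most laborious step, is the ruling out of irreducible modules of non-minimal dimension (and of sums of two or more copies of a minimally degenerate irreducible), which requires a careful dimension count of the image $Q(W)$ inside the null cone using the spin content of $\rho$ from Lemma \ref{lem:equivariance}.
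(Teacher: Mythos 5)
Your overall architecture does not prove the theorem: you establish (i) $\Leftrightarrow$ (ii) $\Leftrightarrow$ (iv) and (ii) $\Rightarrow$ (iii), but condition (iii) is never shown to imply any of the others, and the claim that ``(iii) $\Rightarrow$ (ii) is forced by the resulting classification'' is a non sequitur. Classifying the representations whose quartic vanishes says nothing about a hypothetical representation admitting a linear syzygy $\sum_i S_i[w]f_i(w)\equiv 0$ whose quartic does \emph{not} vanish; a priori (iii) is strictly weaker than (ii), and the entire technical core of the paper's proof is the direct classification of the syzygy condition, i.e.\ (iii) $\Rightarrow$ (iv) (this is also what is needed later, since the condition $(\sharp)$ of Lemma \ref{lem:sufficient} is a refinement of the negation of (iii)). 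Note moreover that your reduction-to-irreducibles device does not transfer to (iii): restricting the syzygy to a single summand $W_\alpha$ may produce only the trivial relation, so you cannot conclude that each irreducible constituent is degenerate in the sense of (iii).

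Even for the implication (ii) $\Rightarrow$ (iv) that you do attempt, the quantitative core is missing. The reduction to irreducible $\rho$ is essentially sound (once you add that a nonempty $\mathrm{Spin}(p,q)$-invariant subcone of the null cone spans $V$ when $p,q\geq 1$, $p+q\geq 3$, so that two such cones cannot be mutually $P$-orthogonal, and once you dispose of $q=0$, where the null cone is $\{0\}$). But ``comparing $\dim W$ with the invariant stratification of $V$'' is not an argument for deciding, for each $(p,q)$, whether the irreducible module is degenerate. The paper does this by putting the basis matrices into the canonical form of Lemma \ref{lem:canonical form} and comparing the rank and signature of the two sides of the resulting identity, which yields the inequality $m\leq 2(p-1)$ for $q=1$ and then bootstraps to general $q$; it is this inequality, checked against the dimension table of irreducible Clifford modules, that isolates the seven triples. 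Nothing in your sketch substitutes for it. A smaller but genuine defect: in (i) $\Rightarrow$ (ii) you assert that $S(c)w_0=0$ forces $Q(w_0)$ to be proportional to $\varepsilon c$; what the computation actually gives is only $\langle Q(w_0),c\rangle={}^tw_0S(c)w_0=0$, i.e.\ $Q(w_0)$ lies in the degenerate hyperplane $P$-orthogonal to the null vector $\varepsilon c$, which does not imply $P(Q(w_0))=0$. The paper instead deduces this step from the $\mathrm{Spin}(p,q)$-equivariance of $Q$ and the prehomogeneity of $(GL(1)\times\mathrm{Spin}(p,q),V)$ via \cite[Lemma 6]{F.Sato4}.
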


\begin{proof}
 ((i) $\Leftrightarrow$ (ii)) 
By Lemma \ref{lem:equivariance}, $GL(1) \times \text{Spin}(p,q)$ acts on $V$ and $W$ as the vector representation and (a direct sum of several copies of ) the spin representation and $Q$ is $GL(1) \times \mathrm{Spin}(p,q)$-equivariant. 
Moreover the action of $GL(1) \times \mathrm{Spin}(p,q)$ on $V$ gives a prehomogeneous vector space and the open orbit $\Omega$ is given by $\{ v \in V | P(v) \neq 0\}$.  
Hence, by \cite[Lemma 6]{F.Sato4}, the condition (i) implies the condition (ii). 
The opposite implication is obvious. 

((iv) $ \Rightarrow$ (ii)) In the case where $(p,q)=(1,1)$ with $S_1=\pm S_2$, the vanishing of $\tilde P$ is obvious. In the other cases, $\tilde P$ is a relative invariant of an irregular prehomogeneous vector space with no relative invariants; hence $\tilde P$ should vanish. 
The details will be given in \S \ref{section:classification}.
 
 ((ii) $\Rightarrow$ (iii)) Assume that $\tilde{P} (w)$ vanishes identically. Then, by differentiating $\tilde{P} (w),$ we obtain
\[ 
\frac{\partial \tilde{P}}{\partial w_j}(w) 
 = 2 \sum_{i=1}^p S_i [w] \frac{\partial}{\partial w_j} S_i [w] 
 -2 \sum_{i=p+1}^{p+q} S_i [w] \frac{\partial}{\partial w_j} S_i [w] \equiv 0.
\]
Since $S_i [w]$ are non-degenerate, ${{\partial} \over {\partial w_j} } S_i [w]$ are non-zero linear forms. Hence the condition (iii) holds.
 
 ((iii) $\Rightarrow$ (iv)) First we consider the case $p=1 \geq q \geq 0.$ It is obvious that the condition in (iii) does not hold for $(p,q)=(1,0)$. 
 In the case $(p,q)=(1,1),$ $R_{1,1}$ has 4 inequivalent irreducible representations
\[
\left\{
 \begin{array}{c} 
 \rho_1 (e_1) =1 , \\
 \rho_1 (e_2) =1, 
 \end{array} 
 \right. 
 \quad 
 \left\{ 
 \begin{array}{c} 
 \rho_2 (e_1) =-1 , \\
 \rho_2 (e_2) =-1, 
 \end{array} 
 \right. 
 \quad 
\left\{ 
 \begin{array}{c} 
 \rho_3 (e_1) =1 , \\
 \rho_3 (e_2) =-1, 
 \end{array} 
 \right. 
 \quad 
\left\{ 
 \begin{array}{c} 
 \rho_4 (e_1) =-1 , \\
 \rho_4 (e_2) =1. 
 \end{array} 
 \right. 
\]
Let $\rho$ be a representation of $R_{1,1} $ and we write 
\[
\rho =k_1 \rho_1 \oplus k_2 \rho_2 \oplus k_3 \rho_3 \oplus k_4 \rho_4 \quad (k_1, k_2, k_3, k_4 \geq 0,\ k_1 +k_2 +k_3 +k_4 \geq 1).
\]
Then, putting 
\[
1_{k_1, k_2} = 
 \begin{pmatrix} 
 1_{k_1} & 0 \\
 0 & -1_{k_2} 
 \end{pmatrix}, \quad 
 1_{k_3, k_4} = 
 \begin{pmatrix} 
 1_{k_3} & 0 \\
 0 & -1_{k_4} 
 \end{pmatrix},
\]
the basis matrices are given by
\begin{equation}
\label{eqn:bm for 1_1}
S_1 =  \begin{pmatrix} 
 1_{k_1,k_2} & 0 \\
 0 & 1_{k_3, k_4} 
 \end{pmatrix}, \quad 
 S_2 = 
 \begin{pmatrix} 
 1_{k_1, k_2} & 0 \\
 0 & -1_{k_3, k_4} 
 \end{pmatrix}, 
\end{equation}
and we have  
\begin{gather*}
S_1 [w]=q_1 (u) +q_2 (v) , \quad S_2 [w]=q_1 (u) -q_2 (v) , \\
q_1 (u )=1_{k_1, k_2} [u ], \quad q_2 (v) =1_{k_3, k_4} [v] ,
\end{gather*} 
where $u \in {\Bbb R}^{k_1 +k_2}$, $v \in {\Bbb R}^{k_3 + k_4} $ and 
$w= \begin{pmatrix} u \\ v \end{pmatrix}$.  
Let $f_1 (w)=f_{11} (u ) +f_{12} (v) $ and 
$f_2 (w)=f_{21} (u) +f_{22} (v) $ be linear forms. Then, since
\begin{eqnarray*}
f_1(w) S_1 [w] +f_2 (w) S_2 [w] 
 &=& 
 q_1 (u)( f_{11}(u) +f_{21}(u)) +q_2 (v) (f_{11}(u) -f_{21}(u)) \\
 & & {} + q_1 (u) (f_{21}(v) +f_{22} (v) ) +q_2 (v) (f_{12} (v) -f_{22} (v)),
\end{eqnarray*} 
the identity 
\[
f_1 (w) S_1 [w] +f_2 (w) S_2 [w] \equiv 0
\]
implies that 
\begin{eqnarray*}
q_1(u) (f_{11}(u) +f_{21}(u)) 
 &\equiv& q_2 (v) (f_{11}(u) -f_{21}(u) )  \\
 &\equiv& q_1 (u) (f_{21}(v) +f_{22}(v) )  \\
 &\equiv& q_2 (v) (f_{12}(v) -f_{22}(v) )  \equiv 0. 
\end{eqnarray*} 
If at least one of $f_1 (w) $ and $f_2 (w)$ is not 0, then $q_1 (u) $ or $q_2 (v)$ vanishes, hence $S_1 = \pm S_2$.

Now we assume that $ p \geq 2$. 
In this case we need the following lemma on canonical forms of basis matrices. 

\begin{lem}
\label{lem:canonical form}
Assume that $ p \geq 2$ and let $\rho$ be a representation of $R_{p,q}$. 
Then the dimension $m$ of $\rho$ is even. 
Put $d=m/2$. 
Let $S_1=\rho(e_1), \dots , S_{p+q}=\rho(e_{p+q})$ be the basis matrices of $\rho$.
Then the signature of $S_i$ $(1 \leq i \leq p)$ is $(d,d)$ and (by replacing $\rho$ by an equivalent representation, if necessary,) we can take the basis matrices of the form
\begin{gather*}
S_1 = \begin{pmatrix}
1_d & 0 \\
0 & -1_d 
\end{pmatrix}, \quad  
S_i = \begin{pmatrix}
0 & B_i \\
{}^t B_i & 0 
\end{pmatrix} \  (2 \leq i \leq p), \\  
S_{p+j} = \begin{pmatrix} 
A_j & 0 \\
0 & A_j 
\end{pmatrix} \  (1 \leq j \leq q).
\end{gather*}
Here 
$A_1 , \dots , A_q $ are the basis matrices of some $d$-dimensional representation of $R_{0,q}=C_q$, 
$B_2 =1_d$ and $B_3,\ldots, B_p$ are orthogonal and skew symmetric matrices. 
Moreover they satisfy the commutation relation 
\begin{gather*}
A_i A_j =-A_j A_i\ (1 \leq i < j \leq q), \quad
B_i B_j =-B_j B_i\ (3 \leq i < j \leq p), \\ 
A_i B_j =B_j A_i \quad (1 \leq i \leq q,\ 3 \leq j \leq p). 
\end{gather*}
\end{lem}

\begin{proof}
Since $S_1^2=1_m$, we may assume that $S_1=1_{r,m-r}$. 
Then, by the anti-commutativity $S_iS_j=-S_jS_i$, 
the symmetric matrices $S_i$ $(2 \leq i \leq p)$ are of the form 
\[
S_i=\begin{pmatrix} 0_r & B_i \\ {}^tB_i & 0_{m-r} \end{pmatrix}, 
\quad B_i \in M(r,m-r;\R).
\]
Since $S_i^2=1_m$, we have $r=m-r$ and $B_i$ is orthogonal. 
This shows that $m$ is even and the signatures of $S_1,\ldots,S_p$ are equal to $(d,d)$ $(d=m/2)$.  
Put $\tilde B_2= B_2 \perp {}^t B_2$.  
By replacing all the $S_i$ by $\tilde B_2 S_i \tilde B_2^{-1}$, 
we may assume that $S_1=1_{d,d}$ and $B_2=1_d$.   
The commutation relation $S_2S_i=-S_iS_2$ $(3 \leq i \leq p)$ implies that $B_i$ is skew-symmetric. 
The remaining part of the lemma is also a straightforward consequence of the commutation relations $(\ref{eqn:clifford cond1})$ and $(\ref{eqn:clifford cond2})$.  
\end{proof}

Let us return to the proof of (iii) $\Rightarrow$ (iv) for $p \geq 2$.  
Then,  by Lemma \ref{lem:canonical form}, $m$ is even. 
Put $d=m/2$ and we may assume that $S_1, \dots , S_{p+q}$ are of the form given in Lemma \ref{lem:canonical form}. 
Then the identity (\ref{eqn:2.4}) takes the form
\begin{equation}
\label{eqn:2.6}
0 \equiv (u^2 - v^2) f_1 (w) + 2 \sum_{i=2}^p (u, B_i v) f_i (w) -\sum_{j=1}^q 
(A_j [u] +A_j [v] ) f_{p+j} (w),   
\end{equation}
where we put $w =\begin{pmatrix} u \\ v \end{pmatrix}$ $(u,v \in {\Bbb R}^d)$ and  $(u,v)={}^t u v$, $u^2 =(u,u)$, $v^2=(v,v)$. 

Consider the case where $p \geq 2$ and $q=0$. Then, the term that do not contain the variable $v$ (resp.\ $u$) is given by $u^2 f_1 (u,0)$ (resp. $v^2 f_1 (0,v)$).
Hence, by (\ref{eqn:2.6}), we have $f_1 (u,0)=f_1 (0,v)=0$ and $f_1 (w) =f_1 (u,0)+f_1 (0,v)=0$. This gives a shorter relation $S_2 [w] f_2 (w) + \cdots + S_p [w] f_p (w) =0.$ Therefore we have no 
non-trivial relations of the form (\ref{eqn:2.4}) in the case $q=0.$

Next we consider the case where $p \geq 2$ and $q=1$. 
We may assume that $f_1 (w) \neq 0.$ 
Then $f_1 (u,0) \neq 0$ or $f_1 (0,v) \neq 0.$ 
Let us assume further that $f_1 (0,v) \neq 0,$ 
since the other case can be treated quite similarly. 
Comparing the terms of degree 3 with respect to $v$ on the left- and right-hand sides of the identity
\begin{equation}
\label{eqn:2.7}
0 \equiv (u^2 - v^2) f_1 (w) +2 \displaystyle{\sum_{i=2}^p} (u, B_i v) f_i (w) 
-(A_1 [u] +A_1 [v]) f_{p+1} (w), 
\end{equation}
we obtain 
\[
-v^2 f_1 (0,v) -A_1 [v] f_{p+1} (0,v) =0.
\]
Since $v^2$ is an irreducible polynomial over ${\Bbb R}$ and $A_1^2=1_d$, we have
\begin{equation}
\label{eqn:2.8}
A_1 = \pm 1_d, \quad f_{p+1} (0,v) = \mp f_1 (0, v). 
\end{equation}
Now compare the terms of degree 2 and 1 with respect to $u$ and $v$,  respectively,  in $(\ref{eqn:2.7})$. 
Then we get 
\[
0=u^2 f_1 (0,v) +2 \sum_{i=2}^p (u, B_i v) f_i (u,0) -A_1 [u] f_{p+1} (0,v). 
\]
From (\ref{eqn:2.8}), it follows that 
\[
u^2 f_1 (0,v) = -\sum_{i=2}^p (u,B_i v) f_i (u,0).
\]
Since $f_1(0,v) \neq 0,$ we can  choose $v \in {\Bbb R}^d $ such that $ f_1 (0,v) >0$. 
Then, for such a  $v$, the quadratic form of  $u$ on the left-hand side is positive definite and of rank $d$.  
The number of positive eigenvalues of the quadratic form of $u$ on the right-hand side is not greater than $p-1.$ 
 Thus we get 
 \begin{center}
 $d \leq p-1$ and $m=2d \leq 2(p-1)$.
 \end{center}
 Note that $S_1,\dots,S_p$ defines a representation of the Clifford algebra $C_p$ and the dimension $d_0$ of irreducible representations of $C_p$ is given by the table,
\[
\begin{array}{|c|c|c|c|c|c|c|c|c|c|} \hline
 p & 2 & 3 & 4 & 5 & 6 & 7 & 8 & 9 & 10 \leq p \\\hline 
 d_0 & 2 & 4 & 8 & 8 & 16 & 16 & 16 & 16 & 2^{(p-1)/2} \leq d_0 \\\hline 
 2(p-1) & 2 & 4 & 6 & 8 & 10 & 12 & 14 & 16 & 2(p-1) \\\hline 
 \end{array}
\]
which we can get easily from Lemma \ref{lem:pos def Clif}.
 If $ p \geq 10,$ then $2^{(p-1)/2} > 2(p-1) .$ Hence, the possibilities of $ p \geq 2$ and $m$ for $q=1$ are 
\[
(2,1,2), \quad (3,1,4) , \quad (5,1,8), \quad (9,1,16), 
\]
which are precisely the cases for $p \geq 2$ and $q=1$ found on the list in (iii).
 
 Finally we consider the case for $ p,q \geq 2$. We may assume that at least one of $  
f_{p+1} (w) , \dots , f_{p+q} (w) $ is not equal to 0; otherwise we have non-trivial relation 
 (\ref{eqn:2.6}) for $q=0$. Look at the terms in (\ref{eqn:2.6}) including only $u$ or only $v$ to get 
\begin{gather*}
0=u^2 f_1 (u,0) - \sum_{j=1}^q A_j [u] f_{p+j}(u,0), \\
0=-v^2 f_1 (0,v) - \sum_{j=1}^q A_j [v] f_{p+j}(0,v).
\end{gather*}
One of these 2 identities should give a non-trivial relation for the $d$-dimensional representation of $R_{q,1}$ defined by 
$S_j =A_j \ (j=1, \dots , q)$ and $S_{q+1}=1_d$.  
By what we have proved for $q=1$, this implies that $(q,d)=(2,2),(3,4),(5,8),(9,16)$.  
If we change the role of $p$ and $q$, we also have $(p,d)=(2,2),(3,4),(5,8),(9,16)$.  Hence the possibilities of $(p,q,m)$ are 
\[
(2,2,4), \quad (3,3,8), \quad (5,5,16), \quad (9,9,32). 
\]
The case $(9,9,32)$ can not occur, since the dimension of irreducible representation of $R_{9,9}$ is equal to $2^8$ (see Lemma \ref{lem:T and T'}), much bigger than 32. 
Thus we have obtained the list of $(p,q,m)$ in (iii). 
\end{proof}

\begin{thm}
\label{thm:squared P}
If $(p,q,m)$ is equal to one of 
\begin{gather*}
(1,0,m)\ (m \geq 1), (2,0,2), (1,1,2), (3,0,4), (2,1,4), \\ 
(5,0,8), (4,1,8), (3,2,8), (9,0,16), (8,1,16), (5,4,16),
\end{gather*} 
then, there exists a quadratic form $q(w)$ on $W$ such that $\tilde P(w) = q(w)^2$ up to constant multiple.  
\end{thm}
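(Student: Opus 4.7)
The plan is a case-by-case verification based on the canonical form of Lemma \ref{lem:canonical form}, together with the observation that every $(p,q,m)$ in the list (except the trivial case $(1,0,m)$) is obtained from a degenerate triple of Theorem \ref{thm:degenerate} by omitting exactly one basis matrix.

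The case $(1,0,m)$ is immediate: $\tilde P(w) = S_1[w]^2$ by definition. For $p \geq 2$, I apply Lemma \ref{lem:canonical form}: write $m = 2d$, $w = \binom{u}{v}$ with $u,v \in \R^d$, and take
\[
\tilde P(w) = ({}^tuu - {}^tvv)^2 + 4\sum_{i=2}^p ({}^tu B_iv)^2 - \sum_{j=1}^q (A_j[u]+A_j[v])^2.
\]
For the pure cases $(p,0,m) \in \{(2,0,2),(3,0,4),(5,0,8),(9,0,16)\}$ the numerology matches up perfectly: $d \in \{1,2,4,8\}$ and $p-1 = d$, so $B_2=1_d, B_3,\ldots,B_p$ realize the left regular representation of one of the real Hurwitz algebras $\R,\C,\H,\mathbb{O}$. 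The norm identity $\sum_{i=2}^p ({}^tuB_iv)^2 = ({}^tuu)({}^tvv)$ then yields $\tilde P(w) = ({}^tuu+{}^tvv)^2 = ({}^tww)^2$, so $q(w) = {}^tww$.

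For the mixed cases $(1,1,2),(2,1,4),(4,1,8),(3,2,8),(8,1,16),(5,4,16)$, my plan is to extend the given representation $\rho$ of $R_{p,q}$ on $\R^m$ to a representation $\rho'$ of the nearest larger Clifford algebra on the same $\R^m$, by adjoining one further symmetric matrix $S_0$ of square $1_m$ satisfying the Clifford relations with the existing generators. Whenever such $S_0$ exists in the endomorphism algebra of $\rho$ --- which can be checked using the structure theory of Lemma \ref{lem:T and T'} and the explicit models in Lemma \ref{lem:pos def Clif} --- Theorem \ref{thm:degenerate} gives $\tilde P'(w) \equiv 0$, and since $\tilde P'(w) - \tilde P(w) = \pm S_0[w]^2$, we conclude $\tilde P(w) = \mp S_0[w]^2$. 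The main obstacle is the finitely many obstructed representations in which no such $S_0$ exists (for instance $\rho = 2\rho_1$ of $R_{1,1}$, where $S_1 = S_2 = 1_m$ are already central and no anticommuting symmetric involution can be adjoined). These are enumerated directly using Lemma \ref{lem:T and T'}, and for each one $\tilde P$ is verified to vanish identically or to be an elementary square by a short direct computation in the canonical form above. The Spin$(p,q)$-equivariance of $Q$ (Lemma \ref{lem:equivariance}) provides a consistency check throughout: the square root $q(w)$ must be a $\mathrm{Spin}(p,q)$-invariant quadratic form on the spin module $W$, which in each of the listed dimensions pins it down up to a scalar.
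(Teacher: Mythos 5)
Your proposal is correct in substance, but it reverses the direction of the paper's argument and adds an independent computation for the definite cases. The paper's proof is a one-step restriction argument: every triple in the list (other than the trivial $(1,0,m)$) arises from a degenerate triple of Theorem \ref{thm:degenerate} by deleting one generator, and since the degenerate quartic vanishes identically, the quartic of the restricted representation equals $\pm S_{p+q}[w]^2$ or $\mp S_p[w]^2$. Your mixed-case argument is exactly this read backwards: adjoin $S_0$ to reach a degenerate triple and conclude $\tilde P=\mp S_0[w]^2$. The two are logically equivalent, but your direction forces you to prove that every representation with the listed parameters actually extends; you flag this but do not carry it out. (This surjectivity of restriction is also tacitly needed for the paper's ``immediate consequence'' to cover \emph{all} representations with the listed $(p,q,m)$, so you are making an implicit step visible rather than creating a new one.) Two points need care in executing it. First, the correct enlargement target depends not only on $(p,q,m)$ but on $\rho$ itself: for several triples only one of the two enlargements even admits representations of the right dimension (e.g.\ $(4,1,8)$ must go to $(5,1,8)$ since $R_{4,2}$ has no $8$-dimensional module, and $(3,2,8)$, $(8,1,16)$, $(5,4,16)$ must go to $(3,3,8)$, $(9,1,16)$, $(5,5,16)$ respectively), and even for $(2,1,4)$, where both $(3,1,4)$ and $(2,2,4)$ are degenerate, the representation $\rho_+\oplus\rho_-$ extends only to $R_{2,2}$ while $\rho_+^{\oplus 2}$ extends only to $R_{3,1}$. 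Second, the genuinely obstructed representations turn out to be precisely the already-degenerate ones (e.g.\ $S_1=\pm S_2$ for $(1,1,2)$), for which $\tilde P\equiv0$ and the claim is vacuous, so your fallback does close the argument. Finally, your treatment of $(2,0,2),(3,0,4),(5,0,8),(9,0,16)$ via Lemma \ref{lem:canonical form} and the identity $\sum_{i=2}^p({}^tuB_iv)^2=({}^tuu)({}^tvv)$ is correct and genuinely different from the paper: the identity holds exactly because $p-1=d$ in these cases, since the $(p-1)\times d$ matrix $M(u)$ with rows ${}^tuB_i$ always satisfies $M(u)\,{}^tM(u)=({}^tuu)1_{p-1}$ and is square precisely then; this route has the merit of exhibiting $q(w)={}^tww$ explicitly rather than as the square of an omitted generator.
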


\begin{proof}
Let $\rho$ be a degenerate representation of $R_{p,q}$ and $S_1,\ldots,S_{p+q}$ the basis matrices of $\rho$. 
Then, $(S_1[w]^2+\cdots+S_p[w]^2)-(S_{p+1}[w]^2+\cdots+S_{p+q}[w]^2)=0$, and 
\begin{gather*}
S_{p+q}[w]^2=(S_1[w]^2+\cdots+S_p[w]^2)-(S_{p+1}[w]^2+\cdots+S_{p+q-1}[w]^2), \\ 
-S_p[w]^2 = (S_1[w]^2+\cdots+S_{p-1}[w]^2)-(S_{p+1}[w]^2+\cdots+S_{p+q}[w]^2)
\end{gather*}
are the quartic forms obtained from the representations $\rho|_{R_{p,q-1}}$ and $\rho|_{R_{p-1,q}}$.  
Hence, the theorem is an immediate consequence of Theorem  \ref{thm:degenerate}. 
\end{proof}

\begin{remark} 
(1) We prove later in \S \ref{section:classification} that Theorem \ref{thm:squared P} gives the complete list of the representations for which the Clifford quartic form $\tilde P(w)$ is the square of a quadratic form. 

(2)   \label{remarlk:Ono}
In the cases above, the mapping $Q:w \mapsto (S_1[w],\ldots,S_{p+q}[w])$ is a  quadratic spherical map in the sense of Ono \cite[Chapter 5]{Ono}. In particular, the cases $(p,q,m)=(3,0,4)$, $(5,0,8)$, $(9,0,16)$ correspond to the Hopf fibrations $S^3\rightarrow S^2$, $S^7\rightarrow S^4$, $S^{15}\rightarrow S^8$, respectively. 
\end{remark}

%%%%%%%%%%%%%%%%%%%%%%%%%%%%%%%%%%%%%%%%%%%% 2.3 LFE %%%%%%%%%%%%%%%%

\subsection{Explicit formulas for the local functional equations}

Now let us describe the explicit formula for the local functional equations satisfied by the local zeta functions attached to the Clifford quartic forms. 

\begin{lem} 
\label{lemma:2.10}
For $p \geq q \geq 0$ with $p+q >0$, we put 
$$ \Omega=\{ x \in {\Bbb R}^{p+q} |P(x) \neq 0 \} , \quad P(x)=\sum_{i=1}^p x_i^2 -\sum_{j=1}^q x_{p+j}^2 .$$
Then, the decomposition of $\Omega $ into connected components is given as follows:
\begin{enumerate}
\def\labelenumi{$(\arabic{enumi})$}
\item 
If $(p,q)=(1,0),$ then we have $\Omega= \Omega_+ \cup \Omega_- , $ where 
$$ \Omega_+ =\{ x \in {\Bbb R} | x < 0 \} , \quad \Omega_- =\{ x \in {\Bbb R} | x <0 \} .$$
\item
If $(p,q)=(1,1),$ then we have $\Omega =\Omega_{++} \cup \Omega_{+-} \cup \Omega_{-+} \cup \Omega_{--} , $ where 
\begin{gather*} 
\Omega_{++}=\{ (x_1, x_2) \in {\Bbb R}^2 | P(x) >0 , x_1 >0 \} ,  
 \quad \Omega_{+-}=\{ (x_1, x_2) \in {\Bbb R}^2 | P(x) >0 , x_1 <0 \}, \\
\Omega_{-+}=\{ (x_1,x_2) \in {\Bbb R}^2 | P(x) <0 , x_2 >0 \} , 
 \quad \Omega_{--}=\{ (x_1, x_2) \in {\Bbb R}^2 | P(x) <0, x_2 <0 \}.
\end{gather*} 
\item 
If $(p,q)=(p,0)$ with $p \geq 2$, then $\Omega$ is connected and we have $\Omega=\Omega_+$.
\item 
If $(p,q)=(p,1)$ with $p \geq 2$, then we have $\Omega =\Omega_+ \cup \Omega_{-+} \cup \Omega_{--}$, 
where 
\begin{gather*} 
\Omega_{+}=\{ x \in {\Bbb R}^{p+1} | P(x) >0 \} , \\
\Omega_{-+}=\{ x \in {\Bbb R}^{p+1} | P(x) <0 , x_{p+1} >0 \} , \quad \Omega_{--}=\{ x \in {\Bbb R}^{p+1} | P(x) <0 , x_{p+1} <0 \}.
\end{gather*}
\item  
If $p,q \geq 2$, then we have $\Omega =\Omega_+ \cup \Omega_-, $ where 
$$
\Omega_+ =\{ x \in {\Bbb R}^{p+q} | P(x) >0 \} , \quad \Omega_- =\{ x \in {\Bbb R}^{p+q} | P(x) <0 \}.
$$
\end{enumerate}
\end{lem}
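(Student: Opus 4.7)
My plan is to analyze $\{P>0\}$ and $\{P<0\}$ separately via an explicit strong deformation retraction onto punctured Euclidean subspaces, and then count components by the well-known topology of $\mathbb{R}^k\setminus\{0\}$.

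First I would verify the key retraction: if $P(x)>0$, then the homotopy $\gamma_t(x):=(x_1,\ldots,x_p,\,tx_{p+1},\ldots,tx_{p+q})$ for $t\in[0,1]$ satisfies
$$
P(\gamma_t(x))=\sum_{i=1}^p x_i^2 - t^2\sum_{j=1}^q x_{p+j}^2 \;\geq\; P(x)\;>\;0,
$$
and $\gamma_0(x)=(x_1,\ldots,x_p,0,\ldots,0)$ lies in $(\mathbb{R}^p\setminus\{0\})\times\{0\}^q$ (note $(x_1,\ldots,x_p)\neq 0$ since $P(x)>0$ forces $\sum x_i^2>0$). Thus $\{P>0\}$ strongly deformation retracts onto $\mathbb{R}^p\setminus\{0\}$. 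By the symmetric argument shrinking the first $p$ coordinates, $\{P<0\}$ strongly deformation retracts onto $\mathbb{R}^q\setminus\{0\}$.

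Next I would invoke the elementary fact that $\mathbb{R}^k\setminus\{0\}$ is connected for $k\geq 2$, has two components (separated by the sign of its single coordinate) for $k=1$, and is empty for $k=0$. Combining this with the retraction, $\{P>0\}$ has one component when $p\geq 2$, two components when $p=1$ (separated by the sign of $x_1$), and is empty when $p=0$; symmetrically $\{P<0\}$ is governed by $q$, with the sign of $x_{p+1}$ separating components when $q=1$. Running through the five cases (1)–(5) gives $2,4,1,3,2$ components respectively, which matches the displayed descriptions.

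There is no real obstacle; the only small verification is that the sign conditions listed in the statement genuinely label the components. This is immediate because the distinguishing coordinate cannot vanish on the relevant region and is preserved by the retraction: for instance in case (4) with $P(x)<0$ one has $x_{p+1}^2>\sum_{i=1}^p x_i^2\geq 0$, so $x_{p+1}\neq 0$, and $\gamma_t$ does not touch $x_{p+1}$; the same type of check handles $x_1$ in case (2) with $P(x)>0$, and $x_2$ in case (2) with $P(x)<0$.
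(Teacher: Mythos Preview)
Your argument is correct. The deformation retraction $\gamma_t$ is well defined on $\{P>0\}$ (respectively $\{P<0\}$) for exactly the reason you give, the image at $t=0$ is $\mathbb{R}^p\setminus\{0\}$ (respectively $\mathbb{R}^q\setminus\{0\}$), and a deformation retraction induces a bijection on path components. The bookkeeping in each case is right, and you correctly identify which coordinate's sign labels the two components when the relevant exponent is $1$.

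The paper itself states this lemma without proof, treating the decomposition as standard and well known. Your retraction argument is a clean way to supply the missing justification; an equally common alternative is to note that the identity component of $O(p,q)$ (or of $GL(1)\times O(p,q)$) acts transitively on each of the listed pieces, which is the point of view implicit in the prehomogeneous-vector-space language used elsewhere in the paper. Both routes are short; yours has the virtue of being entirely self-contained and not invoking any group action.

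One incidental remark: the displayed definition of $\Omega_+$ in case~(1) contains a typo in the paper (it should read $x>0$, not $x<0$); your counting is of course unaffected.
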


For $v \in {\Bbb R}^{p+q} $ with $P(v) \neq 0$, the signature of the symmetric matrix $S(v) = \sum_{i=1}^{p+q} v_i S_i$ depends only on the connected component to which $v$ belongs. 
For each connected component $\Omega_{\eta}$, $\Omega_{\eta\tau}$ $(\eta , \tau = \pm 1)$ we define $\gamma=\gamma_\eta, \gamma_{\eta\tau}$ by 
\[
\gamma =\mathbf{e}\left[ \frac{\sigma_+ - \sigma_-}8 \right], 
\]
where $ \sigma_+$ and $\sigma_-$, respectively, are the numbers of positive and negative eigenvalues of $S(v)$ for a point $v \in \Omega_{\eta\tau}.$ 
An explicit formula for the constants $\gamma$ is given by the following lemma. 

\begin{lem}
\label{lemma:2.11}
Assume that $p \geq q \geq 0$. 
\begin{enumerate}
\def\labelenumi{$(\arabic{enumi})$}
\item 
If $(p,q)=(1,0),$ then $\gamma_\pm=\mathbf{e}\left[ \frac{\pm (k_+ -k_-) }8\right]$, where $k_{\pm}=\dim\set{w\in W}{\rho(e_1)w=\pm w}$.
\item 
If $(p,q)=(1,1)$,  then 
\[
\gamma_{\eta, \tau}= \mathbf{e}\left[\frac{\tau ( k_{++} -k_{--}) + \tau\eta (k_{+-} -k_{-+}) }4 \right],
\]
where $k_{\sigma_1 \sigma_2}=\dim\set{w \in W}{\rho(e_1)w=\sigma_1 w,\ \rho(e_2)w=\sigma_2 w}$ for $\sigma_1, \sigma_2 \in \{\pm\}$. 
\item 
If $ p \geq 2, q=0,$ then $\gamma =1$. 
\item 
If $p \geq 2, q=1$,  then 
\[ 
\gamma_+ =1,\quad  \gamma_{-, \pm} = 
\left\{ \begin{array}{cc} 
(\sqrt{-1})^{\pm (k_+ -k_- )} & (p=2), \\
(-1)^{k_+ -k_-} & ( p=3),  \\
1 & ( p \geq 4),
\end{array} \right. 
\]
where $k_+$ (resp.\ $k_-$) is the multiplicity in $\rho$ of the irreducible representations of $R_{p,1}$ for which $e_{p+1}$ acts as multiplication by $+1$ (resp.\ $-1$).
\item 
If $p \geq q \geq 2, $ then $\gamma_+= \gamma_- =1$. 
\end{enumerate}
\end{lem}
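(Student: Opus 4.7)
\medskip\noindent\textbf{Proof plan.}
The plan is, for each connected component $\Omega_c$ listed in Lemma~\ref{lemma:2.10}, to pick a simple representative $v_0\in\Omega_c$ at which $S(v_0)=\pm S_i$ is a single basis matrix, reducing the calculation of $\sigma_+-\sigma_-$ to the signature of one $S_i$. Lemma~\ref{lem:canonical form} is the workhorse; Lemma~\ref{lem:T and T'} finishes case~(4).

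Cases (1) and (2) are by direct inspection. In (1), $v_0=\pm e_1$ gives $S(v_0)=\pm S_1$; since $S_1^2=1_m$ has eigenvalues $\pm1$ with multiplicities $k_\pm$, the signature of $\pm S_1$ is $(k_\pm,k_\mp)$, and the claimed formula drops out. In (2), the four-fold decomposition of $W$ into joint eigenspaces of $\rho(e_1),\rho(e_2)$ produces the explicit diagonal form $(\ref{eqn:bm for 1_1})$ for $S_1,S_2$; I would then just read off the signatures of $\pm S_1,\pm S_2$ at the representatives $v_0=\pm e_1,\pm e_2$ and verify that they combine into the stated $(\eta,\tau)$-expression.

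Cases (3) and (5), together with the $\Omega_+$-parts of (4) and (5), all use $v_0=e_1$: Lemma~\ref{lem:canonical form} gives $\mathrm{sig}(S_1)=(m/2,m/2)$, so $\gamma=1$. For $\Omega_-$ in (5), I would take $v_0=e_{p+1}$; by Lemma~\ref{lem:canonical form}, $S_{p+1}=A_1\perp A_1$, where $A_1,\ldots,A_q$ are basis matrices of a $d$-dimensional representation of $R_{0,q}=C_q$ with $d=m/2$. Because $q\ge2$, the same argument as in Lemma~\ref{lem:canonical form}, applied with the roles of the positive and negative generators interchanged, yields $\mathrm{sig}(A_1)=(d/2,d/2)$, hence $\mathrm{sig}(S_{p+1})=(m/2,m/2)$ and $\gamma_-=1$.

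The real substance is case (4). Its key point is that when $q=1$ the relations $(\ref{eqn:clifford cond2})$ force $e_{p+1}$ to commute with every $e_i$ for $i\le p$, so $e_{p+1}$ is central in $R_{p,1}$ and therefore acts on each irreducible summand of $\rho$ by $+1$ or $-1$. Grouping the irreducibles accordingly (with total multiplicities $k_\pm$) and letting $d_0$ be their common real dimension, I obtain $\mathrm{tr}(S_{p+1})=d_0(k_+-k_-)$; together with $S_{p+1}^2=1_m$ this pins down $\mathrm{sig}(S_{p+1})=(d_0k_+,d_0k_-)$. Choosing $v=\pm e_{p+1}$ then yields
\[
\gamma_{-,\pm}=\mathbf{e}\!\left[\pm\frac{d_0(k_+-k_-)}{8}\right].
\]
The three subcases $p=2,\;p=3,\;p\ge4$ correspond to $d_0=2$, $d_0=4$, and $d_0\equiv0\pmod 8$, respectively, all of which are visible in Lemma~\ref{lem:T and T'}. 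The only real obstacle is the $8$-divisibility of $d_0$ for every $p\ge 4$; this is immediate from Lemma~\ref{lem:T and T'} via Bott periodicity, but it can also be obtained by iterating Lemma~\ref{lem:canonical form} to strip off generators two at a time.
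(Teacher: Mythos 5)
Your proposal is correct and follows essentially the same route as the paper: evaluate $\gamma$ at the representatives $v=\pm e_1$ or $\pm e_{p+q}$, use Lemma \ref{lem:canonical form} to get signature $(m/2,m/2)$ for $S_1$ (and, for $q\ge2$, for $S_{p+1}$), and in case (4) use that $e_{p+1}$ is central in $R_{p,1}$ so that $S_{p+1}$ has signature $(d_0k_+,d_0k_-)$ with $d_0=2,4$, or a multiple of $8$ according as $p=2,3$, or $\ge4$. The trace argument you use to pin down the signature of $S_{p+1}$ is a harmless elaboration of the paper's terser statement.
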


\begin{proof} 
The constant $\gamma$ does not depend on the choice of a representative $v$ of $\Omega_{\eta\tau}$. 
Hence we may take $v={}^t(\pm1,0,\ldots,0)$ or $v={}^t(0,\ldots,0,\pm1)$. 
Then $S(v)=\pm S_1$ or $\pm S_{p+q}$. 

(1) For the above choice of $v$, we have $S(v)=\pm S_1=\pm 1_{k_+.k_-}$.  
This implies the first assertion.

(2) The second assertion is an immediate consequence of  $(\ref{eqn:bm for 1_1})$. 

(3) If $p \geq 2$, by Lemma \ref{lem:canonical form}, 
the signature of $S_1$ is $({m \over 2} , {m \over 2})$. 
Hence we have $ \varepsilon =1$. 

(4) The proof of $\varepsilon_+=1$ is quite the same as that for (3). 
Let $d$ be the dimension of irreducible representations of $C_p$. 
(Note that all the irreducible representations are of the same dimension.) 
Then $S_{p+1}=I_{dk_+, dk_-}$ and we have $\sigma_+-\sigma_- = d(k_+ -k_-)$. 
Since $d=2 $ for $p=2$, $d=4$ for $p=3$, and $8 | d$ for $ p \geq 3$,  this implies the fourth assertion. 

(5) The last assertion is obvious from Lemma \ref{lem:canonical form}. 
\end{proof}

\begin{lem}
\label{lem:alpha beta}
The constants $\alpha,\beta$ defined in Remark 1.1 (1) are given by
\[
\alpha=\pm1, \quad \beta=(-1)^{q+1}.
\]
\end{lem}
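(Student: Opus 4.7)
\medskip

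\noindent\textbf{Proof plan.}
The plan is to compute $\det (Q_v^*)$ and $\det(\partial\phi(v)_i/\partial v_j)$ by direct manipulation, using only the self-duality relation in the form $S(v)S(\phi(v))=1_m$ and the explicit expression of $\phi$.

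First, for the constant $\alpha$: with the self-dual identification $Q^*=Q$ and the standard inner product identifying $V$ with $V^*$, the matrix of the quadratic form $Q_v^*(w)=\langle v,Q(w)\rangle$ is exactly $S(v)=\sum_i v_i S_i$, so $\det(Q_v^*)=\det S(v)$. Writing $v'_\varepsilon=(\varepsilon_1 v_1,\ldots,\varepsilon_n v_n)$ (so that $\phi(v)=v'_\varepsilon/P(v)$), the duality relation $S(v)S(\phi(v))=1_m$ is equivalent to the polynomial identity $S(v)\,S(v'_\varepsilon)=P(v)\,1_m$ (this is the identity already derived on p.~13, reorganised). Taking determinants gives $\det S(v)\cdot\det S(v'_\varepsilon)=P(v)^m$, and since $P(v'_\varepsilon)=\sum_i\varepsilon_i(\varepsilon_iv_i)^2=P(v)$, the same polynomial shape occurs on both factors. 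For $n\ge 2$, $P(v)$ is irreducible over $\C$, so unique factorisation forces $\det S(v)=c\,P(v)^{m/2}$ for some constant $c$ (this simultaneously reproves that $m$ is even when $p\ge 2$, cf.\ Lemma \ref{lem:canonical form}), and the product identity forces $c^2=1$, i.e.\ $\alpha=c=\pm 1$. The case $n=1$ is handled by direct inspection: $S(v)=v_1 S_1$ with $S_1^2=1_m$, so $\det S(v)=(\pm 1)v_1^m=\pm P(v)^{m/2}$.

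Next, for the constant $\beta$: from $\phi(v)_i=\varepsilon_i v_i/P(v)$ and $\partial P/\partial v_j=2\varepsilon_j v_j$, I compute
\[
\frac{\partial \phi(v)_i}{\partial v_j}
 =\frac{\varepsilon_i\delta_{ij}P(v)-2\varepsilon_i\varepsilon_j v_iv_j}{P(v)^2}.
\]
Write $D=1_{p,q}=\mathrm{diag}(\varepsilon_1,\ldots,\varepsilon_n)$. The numerator is the matrix $J=P(v)\,D-2(Dv)(Dv)^T=D\bigl(P(v)\,1_n-2v(Dv)^T\bigr)$, so $\det J=\det D\cdot \det\bigl(P(v)\,1_n-2v(Dv)^T\bigr)$. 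The matrix determinant lemma, together with $(Dv)^Tv=\sum_i\varepsilon_iv_i^2=P(v)$, yields
\[
\det\bigl(P(v)\,1_n-2v(Dv)^T\bigr)=P(v)^n\Bigl(1-\tfrac{2(Dv)^T v}{P(v)}\Bigr)=-P(v)^n.
\]
Since $\det D=(-1)^q$, this gives $\det J=(-1)^{q+1}P(v)^n$ and therefore
\[
\det\!\left(\frac{\partial\phi(v)_i}{\partial v_j}\right)=\frac{\det J}{P(v)^{2n}}=(-1)^{q+1}P(v)^{-n}=(-1)^{q+1}P(v)^{-2n/d},
\]
so $\beta=(-1)^{q+1}$.

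Neither step presents a serious obstacle: the key observation is that the self-duality relation already pins $\det S(v)$ down to a power of $P(v)$, and the Jacobian is a rank-one perturbation that the matrix determinant lemma handles cleanly. The only mild subtlety is the low-dimensional case $n=1$ (where $P$ is reducible), which is treated separately as above.
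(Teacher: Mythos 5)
Your computation of $\beta$ is correct and complete. It differs from the paper's in execution: the paper invokes the $SO(p,q)$-invariance of the Jacobian determinant (and the a priori shape $\beta P(v)^{-n}$ from Remark 1.1) and simply evaluates at $v={}^t(1,0,\ldots,0)$, whereas you compute the determinant outright as a rank-one update via the matrix determinant lemma. Your version is self-contained and does not presuppose the constancy of $\beta$, which is a small gain.

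For $\alpha$ your route is genuinely different, and in one respect more careful than the paper's: the paper writes $S(v)^2=P(v)1_m$ and takes determinants, but that identity only holds when $pq=0$, since mixed pairs $S_iS_j$ ($i\le p<j$) commute rather than anticommute; the correct identity is the one you use, $S(v)\,S(1_{p,q}v)=P(v)1_m$, and then the unique-factorization step you supply really is needed to extract $\det S(v)=c\,P(v)^{m/2}$. The one genuine flaw is your claim that $P$ is irreducible over $\C$ for all $n\ge2$: for $n=2$ the form splits into two linear factors over $\C$. For $(p,q)=(2,0)$ this is harmless, since the two factors are complex conjugate and $\det S(v)$ is a real polynomial, so they occur with equal multiplicity and the conclusion survives. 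But for $(p,q)=(1,1)$ the factors $v_1\pm v_2$ are real and nothing forces equal exponents; with the basis matrices of $(\ref{eqn:bm for 1_1})$ one gets $\det S(v)=\pm(v_1+v_2)^{k_1+k_2}(v_1-v_2)^{k_3+k_4}$, which is a constant times $P(v)^{m/2}$ only when $k_1+k_2=k_3+k_4$. So you should restrict the unique-factorization argument to $n\ge3$ and treat $n\le2$ separately (noting that the statement, like Remark 1.1 itself, really presupposes $P$ irreducible). With that repair your proof is correct.
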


\begin{proof}
By $(\ref{eqn:clifford cond1})$ and $(\ref{eqn:clifford cond2})$, we have 
\[
S(v)^2=\sum_{i=1}^n v_i^2S_i^2+\sum_{i<j} v_iv_j(S_iS_j+S_jS_i) = P(v)1_m.
\]
Hence, $\det S(v)^2=P(v)^m$ and $\det S(v)=\pm P(v)^{m/2}$. 
This proves that $\alpha=\pm1$. 
The Jacobian 
\begin{eqnarray*}
\det\left(\frac{\partial \phi(v)_i}{\partial v_j}\right)
 &=& \det \left\{
           \frac{1}{P(v)} \begin{pmatrix} 
                  \varepsilon_1  & 0 & \cdots & 0 \\
                  0 & \varepsilon_2  & \ddots  & \vdots \\
                  \vdots  & \ddots  & \ddots  &  0 \\
                  0 & \cdots  & 0  & \varepsilon_n \\
                  \end{pmatrix} \right. \\
      & &      -  \frac{2}{P(v)^2}  \left. \begin{pmatrix} 
                  (\varepsilon_1v_1)^2 & (\varepsilon_1v_1)(\varepsilon_2v_2) 
                            & \cdots & (\varepsilon_1v_1)(\varepsilon_n v_n) \\
                  (\varepsilon_2v_2)(\varepsilon_1v_1) & (\varepsilon_2v_2)^2 & \ddots  & \vdots \\
                  \vdots  & \ddots  & \ddots  &  (\varepsilon_{n-1}v_{n-1})(\varepsilon_n v_n) \\
                  (\varepsilon_n v_n)(\varepsilon_1v_1) & \cdots  & (\varepsilon_n v_n)(\varepsilon_{n-1} v_{n-1})  & (\varepsilon_n v_n)^2 
                  \end{pmatrix} \right\}
\end{eqnarray*}
is an $SO(p,q)$-invariant homogeneous rational function of degree $-2n$ and is equal to $\beta P(v)^{-n}$, where $\varepsilon_i$'s are given by $(\ref{eqn:sign})$. 
We can obtain $\beta$ easily by taking $v={}^t(1,0,\ldots,0)$.   
\end{proof}

We define the local zeta functions for the quadratic form $P(v)$ by (the analytic continuations of) the integrals
\[
\zeta_\pm(s,\Phi)=\int_{\Omega_\pm}\abs{P(v)}^s\Phi(v)\,dv, \quad
\zeta_{\pm\pm}(s,\Phi)=\int_{\Omega_{\pm\pm}}\abs{P(v)}^s\Phi(v)\,dv 
\quad (\Phi \in \calS(\R^n)).
\]

\begin{lem}
\label{lem:LFE for quad form}
For any $\Phi \in \calS(\R^{n})$, the following functional equations hold:
\\
$(1)$ Assume that $(p,q)=(n,0)$. 
\[
\zeta_+\left(s,\hat \Phi\right) 
 = -\pi^{-(2s+n/2+1)} 
        \Gamma(s+1) \Gamma\left(s+\frac n2\right) 
        \sin\left(s\pi\right) \zeta_+\left(-s-\frac n2,\Phi\right).
\]
$(2)$ Assume that $(p,q)=(n-1,1)$. 
\begin{eqnarray*}
\begin{pmatrix}
\zeta_+\left(s,\hat \Phi\right) \\
\zeta_{-+}\left(s,\hat \Phi\right) \\
\zeta_{--}\left(s,\hat \Phi\right) 
\end{pmatrix}
 &=& \pi^{-(2s+n/2+1)} 
        \Gamma(s+1) \Gamma\left(s+\frac n2\right)  \\
 & & \times 
\begin{pmatrix}
-\cos\left(s\pi\right) & -\cos\left(\frac{n\pi}2\right)  & -\cos\left(\frac{n\pi}2\right) \\[0.5ex]
\frac12 & \frac12\mathbf{e}\left[-\frac{2s+n}4\right]    & \frac12\mathbf{e}\left[\frac{2s+n}4\right]   \\[0.5ex]
\frac12  & \frac12\mathbf{e}\left[\frac{2s+n}4\right]   & \frac12\mathbf{e}\left[-\frac{2s+n}4\right]   
\end{pmatrix}
\begin{pmatrix}
\zeta_+\left(-s-\frac n2,\Phi\right) \\
\zeta_{-+}\left(-s-\frac n2,\Phi\right) \\
\zeta_{--}\left(-s-\frac n2,\Phi\right) 
\end{pmatrix}.
\end{eqnarray*}
$(3)$ Assume that $p,q\geq1$ and put $n=p+q$. 
\begin{eqnarray*}
\begin{pmatrix}
\zeta_+\left(s,\hat\Phi\right) \\
\zeta_{-}\left(s,\hat\Phi\right) 
\end{pmatrix}
 &=& \pi^{-(2s+n/2+1)} 
        \Gamma(s+1) \Gamma\left(s+\frac n2\right) \\
 & & \times 
\begin{pmatrix}
-\sin\left(\frac{(2s+q)\pi}2\right) & \sin\left(\frac{p\pi}2\right) \\
\sin\left(\frac{q\pi}2\right) & -\sin\left(\frac{(2s+p)\pi}2\right)
\end{pmatrix}
\begin{pmatrix}
\zeta_+\left(-s-\frac n2,\Phi\right) \\
\zeta_{-}\left(-s-\frac n2,\Phi\right) 
\end{pmatrix}.
\end{eqnarray*}
\end{lem}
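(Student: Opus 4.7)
The plan is to establish the functional equations by direct Fourier-analytic computation, exploiting the $O(p,q)$-covariance and homogeneity of the distributions $|P(v)|^s$ restricted to each connected component. These are the classical Gelfand--Shilov functional equations for homogeneous distributions on indefinite quadratic spaces; the strategy is standard, but the details differ from case to case.

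First, I observe that each distribution $|P(v)|^s\chi_{\Omega_i}$ (where $\chi_{\Omega_i}$ denotes the characteristic function of a component) is tempered, meromorphic in $s$, homogeneous of degree $2s$, and invariant under the identity component of $O(p,q)$ acting on $V$. Conversely, for generic $s$ every such distribution is a linear combination of these. Since the Fourier transform commutes with the $O(p,q)$-action and sends homogeneous distributions of degree $2s$ to homogeneous ones of degree $-2s-n$, the transform of each $|P|^s\chi_{\Omega_i}$ must be a linear combination of $|P|^{-s-n/2}\chi_{\Omega_j}$ with meromorphic gamma-factor coefficients $\Gamma_{ij}(s)$. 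The scalar factor $\pi^{-(2s+n/2+1)}\Gamma(s+1)\Gamma(s+n/2)$ that appears in all three parts is forced by the $b$-function $(s+1)(s+n/2)$ of the regular prehomogeneous vector space $(\mathrm{GL}_1\times O(p,q),V)$, so the only remaining task is to compute the matrix of trigonometric and sign factors.

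To determine the entries $\Gamma_{ij}(s)$ explicitly, I would evaluate both sides of the functional equation on a convenient family of test functions. The cleanest choice begins with a Gaussian $\Phi_0(v)=\exp(-\pi\sum_i v_i^2)$, which equals its own Fourier transform, so the identity collapses to an equality between moment integrals. On each component, switching to polar coordinates in case $(1)$, or to hyperbolic (``pseudo-polar'') coordinates adapted to the signature $(p,q)$ in cases $(2)$ and $(3)$, reduces the integrals to standard beta- and gamma-function evaluations; for case $(1)$ this gives $\zeta_+(s,\Phi_0)=\tfrac12\operatorname{vol}(S^{n-1})\pi^{-s-n/2}\Gamma(s+n/2)$ and, compared with $\zeta_+(-s-n/2,\Phi_0)$, yields the gamma factor immediately. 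In cases $(2)$ and $(3)$ one Gaussian is insufficient since the matrix is not scalar; one resolves this by varying the test function, e.g.\ by using $\Phi_t(v)=\exp(-\pi\sum_{i\leq p}v_i^2 - \pi t\sum_{i>p}v_i^2)$ and analytically continuing in $t$, which produces enough independent equations to pin down the full $2\times 2$ or $3\times 3$ matrix.

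The main obstacle will be keeping track of the precise trigonometric and sign factors arising in the hyperbolic settings $(2)$ and $(3)$. These phases are naturally encoded by the boundary-value distributions $(P\pm\sqrt{-1}\,0)^s$: each $|P|^s\chi_{\Omega_i}$ is a real linear combination of $(P+\sqrt{-1}\,0)^s$ and $(P-\sqrt{-1}\,0)^s$ with coefficients depending on $\sin\pi s$, $\cos\pi s$, and $e^{\pm\pi\sqrt{-1}\,s}$, and the explicit Fourier transforms of $(P\pm\sqrt{-1}\,0)^s$ (a single $\Gamma$-factor times $(P\mp\sqrt{-1}\,0)^{-s-n/2}$, by the Gelfand--Shilov calculation) then yield the matrix entries after an inverse change of basis. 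The Lorentzian case $(2)$ is the most delicate because the three components $\Omega_+,\Omega_{-+},\Omega_{--}$ interact non-trivially; the appearance of $\mathbf{e}[\pm(2s+n)/4]$ in the gamma-factor matrix reflects the quarter-turn phase picked up when rotating the distinguished time coordinate through the complex plane, and must be tracked carefully through the contour deformation used to define the Fourier transforms of the forward and backward light-cone distributions.
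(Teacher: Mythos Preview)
Your approach is correct in principle and follows the classical Gelfand--Shilov line of attack: homogeneity and $O(p,q)$-invariance force the Fourier transform to have the stated shape, and the precise trigonometric entries can then be read off either by testing against Gaussians or, more cleanly, by passing through the boundary-value distributions $(P\pm\sqrt{-1}\,0)^s$ whose Fourier transforms are single $\Gamma$-functions. This is a genuinely different route from the paper's, because the paper does not reprove the lemma at all: parts~(1) and~(3) are simply referred to Kimura's textbook on prehomogeneous vector spaces, and part~(2) is obtained by specializing a two-variable functional equation already established by Muro and by F.~Sato (and also contained in Suzuki's thesis). Your direct derivation would make the exposition self-contained and would clarify the origin of each phase factor, at the cost of several pages of careful bookkeeping (especially in the Lorentzian case~(2), where three components interact); the paper's citation strategy acknowledges that these quadratic-form functional equations are classical and conserves effort for the genuinely new content, namely the pullback functional equation for the Clifford quartic forms.
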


\begin{proof}
The first and the third functional equations are well-known (see, e.g., \cite[\S 4.2]{KimuraBook}). The second functional equation can be derived from the two-variable functional equations given in \cite[\S, Theorem 2]{Muro} or \cite[Theorem 3.6]{F.Sato2} by specialization of a variable. (It is also contained in \cite{Suzuki1}.) 
\end{proof}

Now we have all the necessary data for the description of the local functional equations satisfied by the Clifford quartic forms. 
 
Let $\rho$ be a non-degenerate representation of $R_{p,q}$ on $W=\R^m$ and $Q:W \rightarrow V$ the associated non-degenerate self-dual quadratic mapping. 
Let $S_1, S_2, \dots , S_{p+q}$ be the basis matrices of $\rho$. 
Then the associated Clifford quartic form is given by $\tilde P(w)=S_1[w]^2+\cdots+S_p[w]^2-S_1[w]^2-\cdots-S_{p+q}[w]^2$. 
Put $\tilde\Omega_\pm=Q^{-1}(\Omega_\pm)$ and $\tilde\Omega_{\pm,\pm}=Q^{-1}(\Omega_{\pm\pm})$. 
For $\Psi \in \calS(W)$, we define the local zeta functions by 
\[
\begin{cases}
\tilde\zeta_\pm(s,\Psi)
 = \int_{\tilde\Omega_\pm} \abs{\tilde P(w)}^s \Psi(w)\,dw 
 & (p \geq q \ne 1), \\
\tilde\zeta_{\pm\pm}(s,\Psi) 
 = \int_{\tilde\Omega_{\pm\pm}} \abs{\tilde P(w)}^s \Psi(w)\,dw 
 & (p \geq q = 1).
\end{cases}
\]
By the general theory, the local zeta functions $\tilde\zeta_\pm(s;\Psi)$, $\tilde\zeta_{\pm\pm}(s;\Psi)$ can be continued to meromorphic functions of $s$ in $\C$ and the functional equations of these local zeta functions are immediate consequences of Lemmas \ref{lemma:2.11}, \ref{lem:alpha beta}, \ref{lem:LFE for quad form} and Theorem \ref{thm:main}. 
For simplicity, we give explicit formulas for the local functional equations under the assumption that 
\begin{equation}
\text{``the constants $\gamma$ are equal to $1$ and $m = \dim W \geq 8$.''}
\label{eqn:trivial gamma}
\end{equation}

\begin{thm} 
\label{thm:2.14}
Under the assumption $(\ref{eqn:trivial gamma})$, 
the local zeta functions $\tilde\zeta_\pm(s,\Psi)$, $\tilde\zeta_{\pm\pm}(s,\Psi)$ $(\Psi \in \calS(W))$ satisfy the following local functional equations.

If $(p,q)=(n,0)$, then 
\begin{eqnarray*}
\tilde\zeta_+\left(s,\hat\Psi\right)
 &=& 2^{4s+m/2}\pi^{-4s -2-m/2} \Gamma (s+1) 
          \Gamma \left(s+\frac n2 \right) \Gamma\left (s+1+\frac{m-2n}4 \right) 
          \Gamma \left(s+\frac m4\right)  \\
 & &   {}\times \sin(\pi s) \sin \pi\left(s-\frac{n}2\right) 
                  \tilde\zeta_+\left(-\frac m4 - s,\Psi\right). 
\end{eqnarray*}

If $p > q = 1$, then 
\begin{eqnarray*}
\begin{pmatrix}
\tilde\zeta_+\left(s,\hat\Psi\right)  \\
\tilde\zeta_{-+}\left(s,\hat\Psi\right)  \\
\tilde\zeta_{--}\left(s,\hat\Psi\right)  
\end{pmatrix}
 &=& 2^{4s+m/2}\pi^{-4s -2-m/2} \Gamma (s+1) 
          \Gamma \left(s+\frac n2 \right) \Gamma\left (s+1+\frac{m-2n}4 \right) 
          \Gamma \left(s+\frac m4\right)  \\
 & &   {}\times  \sin \pi s 
\begin{pmatrix}
 -\sin \pi\left(s-\frac {n }2\right) % (1,1) 
& 0 % (1,2)  
& 0 \\ % (1,3)  
-\sin \left(\frac {n \pi}2\right)   % (2.1)
& -\sin \pi\left(s+\frac {n }2\right)    % (2.2) 
& 0    \\ %(2.3)
-\sin \left(\frac {n \pi}2\right) % (3.1)
& 0 %(3.2)
& -\sin \pi\left(s+\frac {n }2\right) %(3,3)
\end{pmatrix} 
                  \begin{pmatrix}
                  \tilde\zeta_+\left(-\frac m4 - s,\Psi\right)  \\
                  \tilde\zeta_{-+}\left(-\frac m4 - s,\Psi\right)  \\
                  \tilde\zeta_{--}\left(-\frac m4 - s,\Psi\right)  
                   \end{pmatrix}. 
\end{eqnarray*}

If $p \geq q \geq 2$, then 
\begin{eqnarray*}
\begin{pmatrix}
\tilde\zeta_+\left(s,\hat\Psi\right)  \\
\tilde\zeta_-\left(s,\hat\Psi\right)  
\end{pmatrix}
 &=& 2^{4s+m/2}\pi^{-4s -2-m/2} \Gamma (s+1) 
          \Gamma \left(s+\frac n2 \right) \Gamma\left (s+1+\frac{m-2n}4 \right) 
          \Gamma \left(s+\frac m4\right)  \\
 & &   {}\times  \sin \pi s \begin{pmatrix}
     \sin\pi\left(s+\frac{q-p}2\right) 
        & -2\sin\frac{\pi p}2 \cos\frac{\pi q}2  \\
     -2\sin\frac{\pi q}2 \cos\frac{\pi p}2
       & \sin\pi\left(s+\frac{p-q}2\right) 
     \end{pmatrix}
                  \begin{pmatrix}
                  \tilde\zeta_+\left(-\frac m4 - s,\Psi\right)  \\
                  \tilde\zeta_-\left(-\frac m4 - s,\Psi\right)  
                   \end{pmatrix}. 
\end{eqnarray*}
\end{thm}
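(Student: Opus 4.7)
The plan is to apply Theorem \ref{thm:main} directly to the present setting and combine it with the local functional equations for the quadratic form $P$ recalled in Lemma \ref{lem:LFE for quad form}. The hypotheses of Theorem \ref{thm:main} are already in hand: (A.1) holds via the explicit $\phi$ given at the start of Section 2.1; (A.2) is the standing non-degeneracy of $\rho$; and (A.3), with $d=\deg P=2$, is furnished by Lemma \ref{lem:LFE for quad form}. Since $d=2$, the shift of $s$ inside the second gamma matrix is $u:=(m-2n)/(2d)=(m-2n)/4$, and the argument of $\tilde\zeta_j$ on the right is $-m/4-s$.

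First I will assemble the prefactor. Lemma \ref{lem:alpha beta} gives $|\alpha|^{1/2}|\beta|^{-1}=1$, and hypothesis (\ref{eqn:trivial gamma}) collapses every $\mathbf{e}[(p_k-q_k)/8]$ to $1$, so the sum defining $\tilde\Gamma_{ij}(s)$ reduces to the matrix product
\[
\tilde\Gamma(s)\;=\;2^{4s+m/2}\,\Gamma_P(s)\,\Gamma_P(s+u),
\]
where $\Gamma_P(s)$ denotes the gamma-factor matrix from the relevant case of Lemma \ref{lem:LFE for quad form}. The common scalar factor $\pi^{-(2s+n/2+1)}\Gamma(s+1)\Gamma(s+n/2)$ of $\Gamma_P(s)$ and its analogue at $s+u$ will combine, using $u+n/2=m/4$ and $2u+n=m/2$, to produce
\[
\pi^{-4s-m/2-2}\,\Gamma(s+1)\,\Gamma\!\left(s+\frac{n}{2}\right)\Gamma\!\left(s+1+\frac{m-2n}{4}\right)\Gamma\!\left(s+\frac{m}{4}\right),
\]
which matches the prefactor in the theorem.

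Next I will compute the matrix product of the trigonometric parts case by case. For $p\ge q\ge 2$, the $(1,1)$-entry of $\Gamma_P(s)\Gamma_P(s+u)$ (with the scalar prefactors stripped away) is
\[
\sin\frac{(2s+q)\pi}{2}\,\sin\frac{(2s+2u+q)\pi}{2}\;+\;\sin\frac{p\pi}{2}\,\sin\frac{q\pi}{2},
\]
which I expand via $2\sin A\sin B=\cos(A-B)-\cos(A+B)$; regrouping the four resulting cosines and applying $\cos A-\cos B=-2\sin\frac{A+B}{2}\sin\frac{A-B}{2}$ produces the asserted $\sin(\pi s)\sin\pi(s+(q-p)/2)$. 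The off-diagonal entries carry a common factor of the form $\sin\frac{(2s+q)\pi}{2}+\sin\frac{(2s+m/2-q)\pi}{2}$ which, by $\sin A+\sin B=2\sin\frac{A+B}{2}\cos\frac{A-B}{2}$, collapses to $2\sin(\pi s)\cos\frac{q\pi}{2}$, producing the asserted $-2\sin(\pi s)\sin(\pi p/2)\cos(\pi q/2)$; the $(2,2)$-entry is obtained by the symmetric computation with $p$ and $q$ swapped. The case $p>q=1$ is handled identically with the $3\times 3$ matrix of Lemma \ref{lem:LFE for quad form} (2), and the stated zeros in positions $(1,2),(1,3),(2,3),(3,2)$ should arise from cancellations among the complex exponentials $\mathbf{e}[\pm(2s+n)/4]$; the scalar case $(n,0)$ is immediate. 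I expect the $3\times 3$ case to be the main obstacle, since it requires tracking four interacting oscillatory terms and verifying each cancellation; throughout, the identity $u+n/2=m/4$ will be the principal device for eliminating the auxiliary shift $u$ in favor of the natural parameters $n$ and $m$.
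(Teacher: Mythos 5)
Your proposal is correct and follows exactly the paper's route: the paper obtains Theorem \ref{thm:2.14} as an ``immediate consequence'' of Lemmas \ref{lemma:2.11}, \ref{lem:alpha beta}, \ref{lem:LFE for quad form} and Theorem \ref{thm:main}, which is precisely your plan of forming the matrix product $\Gamma_P(s)\Gamma_P(s+u)$ with $u=(m-2n)/4$ (the diagonal of signature factors collapsing to the identity under $(\ref{eqn:trivial gamma})$), and your bookkeeping of the powers of $2$ and $\pi$ and of the four Gamma factors is right. One caveat worth recording explicitly: eliminating the shift $u$ from the trigonometric factors (e.g.\ $\sin\pi(s+\tfrac q2)+\sin\pi(s+\tfrac m4-\tfrac q2)=2\sin(\pi s)\cos\tfrac{\pi q}2$) uses not merely $u+\tfrac n2=\tfrac m4$ but that $\tfrac m4$ is an \emph{even integer}; this holds automatically whenever the irreducible $R_{p,q}$-modules have dimension divisible by $8$, but for a few small signatures (such as $(p,q)=(2,0)$ or $(2,2)$, where $m\equiv 2$ or $4 \bmod 8$ is possible with $m\geq 8$) the identities you invoke acquire extra signs or cosines, a subtlety already implicit in the theorem's statement itself.
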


\begin{remark}
The non-degenerate cases that are excluded by the assumption  $(\ref{eqn:trivial gamma})$ are
\[
(p,q)=(1,0),(1,1),(2,1),(3,1)\quad \text{and} \quad (p,q,m)=(3,0,4).
\]
In these cases, the Clifford quartic forms are relative invariants of well-known prehomogeneous vector spaces (see the classification in \S 4).  
\end{remark}

%%%%%%%%%%%%%%%%%%%%% 3 Group leaving Clifford quartic form %%%%%%%%%

\section{The structure of the groups of symmetries of the Clifford quartic forms}

Let $\rho$ be a non-degenerate representation of $R_{p,q}$ and $\tilde P(w)$ the associated Clifford quartic form. 
Then, $\tilde P(w)$ is not a relative invariant of any prehomogeneous vector space except for some low-dimensional cases and the local functional equation satisfied by $\tilde P(w)$ in Theorem \ref{thm:2.14} is not covered by the theory of prehomogeneous vector spaces. 
To see this, we need to know the structure of the group 
\[
G_{p,q}(\rho) :=\set{ g \in GL(W)}{\tilde{P} ( g \cdot w ) =\tilde{P}(w)}, 
\]
the group of symmetries of $\tilde P$,  
and to determine when $(GL(1)\times G_{p,q}(\rho),W)$ is a prehomogeneous vector space. 
If $(GL(1)\times G_{p,q}(\rho),W)$ is not a prehomogeneous vector space, then there exist no prehomogeneous vector spaces with $\tilde P$ as relative invariant. 

Let $\mathfrak{g}_{p,q}(\rho)$ be the Lie algebra of $G_{p,q}(\rho)$. 
Differentiating the identity $\tilde{P} (\exp(tX) \cdot w) = \tilde{P} (w)$ $(X \in \mathfrak{gl}(W))$, we have 
\[
\gerg_{p,q}(\rho)=\set{X\in\mathfrak{gl}(W)}{\sum_{i=1}^{p+q} \varepsilon_i S_i [w] ({}^t X S_i +S_i X) [w]=0}, 
\]
where $\varepsilon_i$'s are as in $(\ref{eqn:sign})$.

First note that the group $\mathrm{Spin}(p,q)$ is contained in $G_{p,q}(\rho)$, since, by Lemma \ref{lem:equivariance}, we have 
\[
\tilde P(gw)=P(Q(gw))=P(gQ(w))=P(w)\quad (g \in \mathrm{Spin}(p,q)). 
\]
Hence the Lie algebra $\gerk_{p,q}$ $({}\cong \gers\gero(p,q))$ of $\mathrm{Spin}(p,q)$ is contained in $\gerg_{p,q}(\rho)$. 
There exists another group contained in $G_{p,q}(\rho)$.  
Put 
\[
H_{p,q}(\rho):=\set{g \in GL(W)}{S_i[gw]=S_i[w]\ (1 \leq i \leq p+q)} 
 = \bigcap_{i=1}^{p+q} O(S_i).
\]
Then, it is obvious that $H_{p,q}(\rho)$ is also contained in $G_{p,q}(\rho)$.  
Since $g \mapsto S_i\,{}^t\!g^{-1}S_i$ $(1 \leq i \leq p+q)$ are involutions commuting with each other, $H_{p,q}(\rho)$ is a reductive Lie group. 
The Lie algebra $\mathfrak{h}_{p.q} (\rho)$ of  $H_{p,q}(\rho)$ is given by
\[
\mathfrak{h}_{p.q} (\rho) 
 = \set{X \in \mathfrak{gl}(W)}{{}^t X S_i +S_i X=0 \ (i=1, \dots , p+q)}.
\]

\begin{thm}
\label{thm:3.1}
Let $\rho$ be an $m$-dimensional representation of $R_{p,q}$. 
Then, we have 
\[
\gerg_{p,q} (\rho) \cong \gers\gero(p,q) \oplus \gerh_{p,q}(\rho)
\]
except for the following low dimensional cases. 
\[
\begin{array}{|c|c|c|c|c|c|c|c|c|c|c|} \hline 
p+q &2 & 3 & 4 & 5 & 6 & 7 & 8 & 9 & 10 & 11  \\\hline 
m & \text{\bf pure} & \mathbf{2},4 & \mathbf{4},8 & 8 & \mathbf{8},16 & 16 & 16 & 16 & \mathbf{16},32 & 32 \\\hline 
\end{array} 
\]
(The cases written with boldface letters are degenerate cases (see Theorem \ref{thm:degenerate}).)
\end{thm}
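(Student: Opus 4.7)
My plan is to split the proof into a short directness check, a geometric reformulation of the defining condition, and a representation-theoretic identification that I expect to carry the bulk of the argument.

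\textbf{Directness.} For $Y=\sum_{i<j}a_{ij}e_ie_j\in\gerk_{p,q}$ and $X=\rho(Y)$, the computation in the proof of Lemma~\ref{lem:equivariance} yields
\[
{}^tXS_k+S_kX \;=\; 2\sum_{j>k}a_{kj}S_j \;-\; 2\sum_{i<k}\varepsilon_i\varepsilon_k\,a_{ik}S_i.
\]
If in addition $X\in\gerh_{p,q}(\rho)$, the right-hand side vanishes for every $k$. The anticommutation relations (\ref{eqn:clifford cond1})--(\ref{eqn:clifford cond2}) force linear independence of $S_1,\ldots,S_{p+q}$ whenever $p+q\geq2$, so every $a_{ij}$ is zero and $X=0$; for $p+q=1$ we have $\gerk_{p,q}=\{0\}$ trivially. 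Hence $\rho(\gerk_{p,q})+\gerh_{p,q}(\rho)\subseteq\gerg_{p,q}(\rho)$ is a direct sum, and only equality remains.

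\textbf{Geometric reformulation.} Introduce the quadratic map $T(X):W\to V$ with components $T_k(X)[w]:=({}^tXS_k+S_kX)[w]$. Differentiating $\tilde P(\exp(tX)\cdot w)=\tilde P(w)$ at $t=0$ shows that $X\in\gerg_{p,q}(\rho)$ if and only if $T(X)$ is \emph{$P$-orthogonal to $Q$}, namely
\[
\langle Q(w),\,T(X)[w]\rangle_P \;\equiv\; 0 \qquad (w\in W),
\]
where $\langle\,,\,\rangle_P$ denotes the bilinear form polarising $P$. For $X\in\rho(\gerk_{p,q})$ the formula of Step~1 actually gives $T(X)=f\circ Q$ for the $f\in\mathfrak{so}(p,q)$ corresponding to $X$ via Lemma~\ref{lem:equivariance}, and for $X\in\gerh_{p,q}(\rho)$ one has $T(X)\equiv0$. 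So the remaining content of the theorem is that every $P$-orthogonal quadratic map of the form $T(X)$ already factors as $f\circ Q$ for some $f\in\mathfrak{so}(p,q)$.

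\textbf{Representation-theoretic identification.} The $\mathrm{Spin}(p,q)\times H_{p,q}(\rho)$-equivariance of $Q$ turns $\gerg_{p,q}(\rho)$ into a $\mathrm{Spin}(p,q)\times H_{p,q}(\rho)$-submodule of $\mathfrak{gl}(W)$. I would first reduce to the pure case of Definition~\ref{def:pure and mixed} by decomposing $W$ into its $R_{p,q}^+$-isotypic summands and handling the cross terms via the canonical form of Lemma~\ref{lem:canonical form}; and then, in the pure case, write $W\cong\Delta\otimes U$ with $\Delta$ a spin module of $R_{p,q}^+$ and $U$ the multiplicity space on which $H_{p,q}(\rho)$ acts, so that $\mathrm{End}(W)=\mathrm{End}(\Delta)\otimes\mathrm{End}(U)$ splits into $\mathrm{Spin}(p,q)\times H_{p,q}(\rho)$-isotypic pieces. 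Two of these pieces accommodate $\rho(\gerk_{p,q})$ and $\gerh_{p,q}(\rho)$; for every remaining piece, I would need to exhibit a generator $X$ and a test vector $w\in W$ at which $\langle Q(w),T(X)[w]\rangle_P\ne0$.

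\textbf{Main obstacle.} The exceptional $(p+q,m)$ in the table should correspond precisely to the low-dimensional coincidences where the spin module $\Delta$ is so small that a higher isotypic component of $\mathrm{End}(\Delta)$ accidentally becomes isomorphic to the vector or the trivial representation of $\mathrm{Spin}(p,q)$ (the triality of $\mathrm{Spin}(8)$ being the paradigm), producing genuinely new elements of $\gerg_{p,q}(\rho)$. Confirming, by means of the branching recorded in Lemma~\ref{lem:T and T'}, that the required test vectors can be produced uniformly outside the listed cases -- and that the table is sharp in both directions -- is the delicate book-keeping I expect to fill the case analysis of Sections~5 and~6.
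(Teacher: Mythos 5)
Your first two steps are sound and reproduce the paper's reduction (Lemma \ref{lem:sufficient}): for $X\in\gerg_{p,q}(\rho)$ one has $\sum_i\varepsilon_iS_i[w]({}^tXS_i+S_iX)[w]\equiv0$, and the theorem amounts to showing that this forces ${}^tXS_i+S_iX=\sum_j a_{ij}S_j$ with $(a_{ij})\in\mathfrak{so}(p,q)$, the section being $\rho|_{\gerk_{p,q}}$ and the kernel $\gerh_{p,q}(\rho)$. (One caveat in your directness step: linear independence of $S_1,\ldots,S_{p+q}$ is \emph{not} forced by the Clifford relations alone --- for $(p,q)=(1,1)$ pure one has $S_1=\pm S_2$ --- though this only affects excluded cases.) From there, however, your route diverges from the paper's and, more importantly, stops where the theorem actually begins. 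The paper proves the stronger condition $(\sharp)$ --- the implication for \emph{arbitrary} symmetric $X_1,\ldots,X_{p+q}$, not just those of the form ${}^tXS_i+S_iX$ --- precisely because that stronger statement is stable under restricting $\rho$ to $R_{p-1,q}$, $R_{p,q-1}$, $C_{p-1}$, etc., which is what drives the induction on $p+q$ in \S\ref{section:proof of key} (via the canonical form of Lemma \ref{lem:canonical form} and the rank estimates of Lemma \ref{lem:5.3}, yielding the clean bound $m>4(p+q)-8$). Your restricted family $T(X)$ does not obviously admit such an inductive structure, and your isotypic-decomposition plan is left entirely as a plan: producing the "test vectors" for every non-vector, non-trivial component of $\mathrm{End}(\Delta)\otimes\mathrm{End}(U)$, uniformly in $(p,q)$ and in the multiplicity space, \emph{is} the theorem, and none of it is carried out.

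There is also a conceptual problem with the heuristic you offer for the exceptional table. The extra symmetries in the listed cases are not all explained by a component of $\mathrm{End}(\Delta)$ degenerating to the vector or trivial $\mathrm{Spin}(p,q)$-type: for instance, for $(p+q,m)=(5,8)$ one has $\gerg_{3,2}(\rho)=\mathfrak{so}(4,4)$, and the $15$-dimensional complement of $\gerg'_{3,2}(\rho)$ is $\Lambda^2_0\R^4\otimes S^2\R^2$, i.e.\ (vector of $\mathfrak{so}(3,2)$) $\otimes$ (adjoint of $\gerh_{3,2}(\rho)$); conversely, a component of vector type need not lie in $\gerg_{p,q}(\rho)$ --- the quartic identity still has to be checked, and generically it fails. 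So the representation type alone neither certifies membership in $\gerg_{p,q}(\rho)$ nor rules it out, and the sharpness of the table in the exceptional direction cannot be read off from such coincidences; the paper establishes it separately in \S\ref{section:classification} by identifying $\gerg_{p,q}(\rho)$ explicitly in each exceptional case through the classification of prehomogeneous vector spaces. In short: correct framing, a legitimately different (and potentially workable) strategy for the main step, but the entire quantitative content of the theorem --- the induction or its replacement, and the verification of the table in both directions --- is missing.
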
 

The following lemma gives a sufficient condition for $\gerg_{p,q} (\rho)$ to be isomorphic to $\gers\gero(p,q) \oplus \gerh_{p,q}(\rho)$. 
% (It will turn out that the condition is also necessary.) 

\begin{lem}
\label{lem:sufficient}
If the basis matrices $S_1, S_2, \dots , S_{p+q}$ of a representation $\rho$ of $R_{p,q}$ satisfy the condition $(\sharp)$ below, 
then $\gerg_{p,q} (\rho)$ is isomorphic to $\gers\gero(p,q) \oplus \gerh_{p,q}(\rho)$:
\begin{description}
\addtocounter{equation}{1}
\item[$(\sharp)$]   if\/  $\displaystyle{\sum_{i=1}^{p+q}} S_i [w]  X_i [w] \equiv 0$ for $X_1 , \dots , X_{p+q} \in {\rm Sym}(m;{\Bbb R})$ $(m=\mathrm{deg}\,\rho)$, 
 then we have $X_i = \displaystyle{\sum_{j=1}^{p+q}}  a_{ij} S_j $ $(i=1,2, \dots , p+q)$
for some $a_{ij}$ with $a_{ij} =-a_{ji}$.
\end{description}
\end{lem}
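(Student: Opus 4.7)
The plan is to decompose any $X \in \gerg_{p,q}(\rho)$ as $X = Z + X'$ with $Z \in \rho(\gerk_{p,q})$ and $X' \in \gerh_{p,q}(\rho)$, using $(\sharp)$ to produce the spin-algebra element $Z$ that absorbs the nontrivial action of $X$ on each quadratic form $S_k[w]$. Setting $Y_i := {}^tX S_i + S_i X$ (which are symmetric), the defining condition of $\gerg_{p,q}(\rho)$ reads $\sum_i \varepsilon_i S_i[w]\, Y_i[w] \equiv 0$, so applying $(\sharp)$ to $X_i := \varepsilon_i Y_i$ yields antisymmetric scalars $a_{ij}=-a_{ji}$ with $Y_i = \varepsilon_i \sum_j a_{ij} S_j$. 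The computation carried out in the proof of Lemma \ref{lem:equivariance} then shows that the element $Z := \tfrac{1}{2}\sum_{i<j} \varepsilon_i a_{ij}\, S_i S_j \in \rho(\gerk_{p,q})$ satisfies ${}^tZ\, S_k + S_k Z = Y_k$ for every $k$, the antisymmetry of $a_{ij}$ being precisely what makes the contributions from $k<j$ and $k>j$ match consistently. Consequently $X - Z \in \gerh_{p,q}(\rho)$, proving $\gerg_{p,q}(\rho) = \rho(\gerk_{p,q}) + \gerh_{p,q}(\rho)$.

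Next I would promote this sum to a Lie algebra direct sum. The two subalgebras commute: for $X' \in \gerh_{p,q}(\rho)$, the relation ${}^tX' S_i = -S_i X'$ immediately gives $(S_iS_j)X' = X'(S_iS_j)$, so $[\rho(\gerk_{p,q}), \gerh_{p,q}(\rho)] = 0$. For the trivial intersection I would invoke the Spin-equivariance of $Q$: if $Z = \sum c_{ij} S_iS_j$ also lies in $\gerh_{p,q}(\rho)$, then $\exp(tZ)$ preserves each $S_k[w]$, hence $Q(w)$, so by Lemma \ref{lem:equivariance} the corresponding element of $\gers\gero(p,q)$ acts trivially on the whole image $Q(W)$. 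Provided $Q(W)$ spans $V$, this forces $Z = 0$, and the same argument establishes injectivity of $\rho|_{\gerk_{p,q}}$, so $\rho(\gerk_{p,q}) \cong \gerk_{p,q} \cong \gers\gero(p,q)$ by Lemma \ref{lem:equivariance}.

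The hard part will be showing that $(\sharp)$ alone suffices to make $Q(W)$ span $V$, i.e., that $(\sharp)$ rules out the degenerate cases of Theorem \ref{thm:degenerate}. I would argue by contrapositive: if $\tilde{P} \equiv 0$, then the choice $X_i := \varepsilon_i S_i$ gives $\sum_i S_i[w] X_i[w] = \tilde{P}(w) \equiv 0$, so $(\sharp)$ produces a relation $\varepsilon_i S_i = \sum_j a_{ij} S_j$ with antisymmetric $a_{ij}$; since the diagonal coefficient is $\varepsilon_i \ne 0$ while $a_{ii} = 0$, this is a nontrivial linear dependence among $S_1,\dots,S_{p+q}$. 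Iterating $(\sharp)$ on suitable symmetric combinations built from this dependence (for instance, multiplying the relation by scalar matrices and re-feeding it into $(\sharp)$) should yield a contradiction with the existence statement of $(\sharp)$ itself, completing the argument.
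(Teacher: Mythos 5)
Your first two paragraphs are correct and follow essentially the paper's own route: the paper applies $(\sharp)$ to $X_i=\varepsilon_i({}^tXS_i+S_iX)$, obtains the map $f\colon X\mapsto(a_{ij})$ into $\gers\gero(p,q)$, and notes that $(a_{ij})\mapsto\sum_{i<j}a_{ij}S_iS_j$ is a section with kernel $\gerh_{p,q}(\rho)$. Your explicit element $Z=\tfrac12\sum_{i<j}\varepsilon_i a_{ij}S_iS_j$ with ${}^tZS_k+S_kZ=Y_k$ is exactly that section (the check against the displayed computation in Lemma \ref{lem:equivariance} works, and the antisymmetry of $a_{ij}$ is indeed what makes the coefficients determined from position $k$ and from position $j$ agree), and your verification that $\rho(\gerk_{p,q})$ centralizes $\gerh_{p,q}(\rho)$ supplies a detail the paper leaves implicit.

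The third paragraph is where the proposal goes astray. First, ``$Q(W)$ spans $V$'' is \emph{not} equivalent to non-degeneracy: in the degenerate cases $Q(W)$ is contained in the cone $\{P=0\}$, which still spans $V$ whenever $p,q\geq1$. What you actually need is only the spanning statement, equivalently the linear independence of $S_1,\dots,S_{p+q}$, and this is automatic from the Clifford relations and does not involve $(\sharp)$ at all: $S_iS_j+S_jS_i=2\varepsilon_i\delta_{ij}1_m$ gives $\mathrm{tr}(S_iS_j)=m\varepsilon_i\delta_{ij}$, so the $S_i$ are pairwise orthogonal and non-isotropic for the trace form, hence independent. (With this in hand you can also bypass the detour through $\exp(tZ)$ and $Q(W)$: if $Z=\sum_{i<j}c_{ij}S_iS_j$ lies in $\gerh_{p,q}(\rho)$, then $0={}^tZS_k+S_kZ=2\sum_{j>k}c_{kj}S_j-2\sum_{i<k}\varepsilon_i\varepsilon_kc_{ik}S_i$ for every $k$, and independence of the $S_l$ forces all $c_{ij}=0$, which also gives the injectivity of $\rho|_{\gerk_{p,q}}$.) Second, the argument you do sketch for the stronger claim is left unfinished (``iterating $(\sharp)$ \dots should yield a contradiction''); once independence of the $S_i$ is known the contradiction is in fact immediate, since $(\sharp)$ applied to $X_i=\varepsilon_iS_i$ would give $\varepsilon_iS_i=\sum_j a_{ij}S_j$ with $a_{ii}=0$, which is impossible --- but that entire step is unnecessary for the lemma. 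As written, the proposal's final step is both aimed at the wrong target and incomplete; the fix is the elementary trace argument above.
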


\begin{proof}
For $X \in \gerg_{p,q}(\rho)$, the assumption $(\sufcond)$ implies that   
\[
{}^t X S_i +S_i X = \sum_{j=1}^{p+q} a_{ij} S_j \ (i=1, \dots , p+q), \quad a_{ij}=-\varepsilon_i \varepsilon_j a_{ji} \ ( 1 \leq i <j \leq p+q). 
\]
The coefficients $a_{ij}$ defines an element $(a_{ij})$ in $\mathfrak{so}(p,q)$ and the mapping 
\[
f : \mathfrak{g}_{p,q} ( \rho )  \longrightarrow \mathfrak{so}(p,q) , \quad X \mapsto (a_{ij} ) 
\]
gives a Lie algebra homomorphism.  The mapping $\mathfrak{so}(p,q) \ni (a_{ij}) \mapsto \sum_{i<j} a_{ij} S_iS_j \in \rho(\gerk_{p,q})$ is a section of $f$ (see Lemma \ref{lem:equivariance}).  
It is obvious that
\[
\mathrm{Ker}(f) =\{ X \in \mathfrak{g}_{p,q}( \rho ) | \ {}^t X S_i +S_i X = 0 \ (1 \leq i \leq p+q) \} =\mathfrak{h}_{p,q}(\rho).  
\]
This proves the isomorphism $ \mathfrak{g}_{p,q} ( \rho) \cong \mathfrak{so}(p,q) \oplus \mathfrak{h}_{p,q} (\rho)$. 
\end{proof}

Theorem \ref{thm:3.1} is an immediate consequence of  
Lemma \ref{lem:sufficient} and the following lemma.
 
\begin{lem}
\label{lem:key}
The condition $(\sufcond)$ holds except for the following cases.
\[
\begin{array}{|c|c|c|c|c|c|c|c|c|c|c|} 
\hline 
p+q &2 & 3 & 4 & 5 & 6 & 7 & 8 & 9 & 10 & 11  \\
\hline 
m & \text{pure} & 2,4 & 4,8 & 8 & 8,16 & 16 & 16 & 16 & 16,32 & 32 \\
\hline 
\end{array} 
\]
\end{lem}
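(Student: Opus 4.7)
My plan is to establish condition $(\sharp)$ by reducing to the canonical form of the basis matrices given in Lemma \ref{lem:canonical form}, combined with an induction on $p+q$ and a dimension count; the exceptional pairs $(p+q, m)$ in the table are then handled one at a time by producing explicit extra solutions.

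First I record the "easy direction": if $X_i = \sum_j a_{ij} S_j$ with $a_{ij} = -a_{ji}$, then
\[
\sum_{i=1}^{p+q} S_i[w]\, X_i[w] = \sum_{i,j} a_{ij}\, S_i[w]\, S_j[w] = 0,
\]
since the scalar $S_i[w]\, S_j[w]$ is symmetric in $i,j$ while $a_{ij}$ is skew. This exhibits a $\binom{p+q}{2}$-dimensional "trivial" subspace of the kernel of the map
\[
\Phi \colon \mathrm{Sym}(m;\R)^{p+q} \longrightarrow \mathrm{Sym}^4(W^*), \qquad (X_1,\dots,X_{p+q}) \longmapsto \sum_i S_i[w]\, X_i[w].
\]
Condition $(\sharp)$ is the assertion that $\ker\Phi$ coincides with this trivial subspace, and this is what must be proved in the generic range.

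Assume $p \geq 2$ and, using Lemma \ref{lem:canonical form}, place $S_1,\dots,S_{p+q}$ in the canonical form with $S_1 = 1_d \perp (-1_d)$, $S_i$ block-off-diagonal with block $B_i$ for $2\leq i\leq p$, and $S_{p+j} = A_j \perp A_j$. Writing $w = \begin{pmatrix} u \\ v \end{pmatrix}$ and $X_i = \begin{pmatrix} P_i & Q_i \\ {}^tQ_i & R_i \end{pmatrix}$ with $P_i, R_i$ symmetric, the identity $\sum S_i[w]X_i[w] \equiv 0$ splits, according to the $(u,v)$-bidegree, into five polynomial identities. The $(4,0)$-piece reads
\[
u^2\, P_1[u] + \sum_{j=1}^q A_j[u]\, P_{p+j}[u] \equiv 0,
\]
which is the quadratic analogue, for the representation of $R_{q,1}$ generated by $\{1_d, A_1,\dots,A_q\}$, of the linear identity (\ref{eqn:2.4}) analysed in the proof of Theorem \ref{thm:degenerate}. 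An induction on $p+q$ forces the $P_i$ to be skew-symmetric combinations of $\{1_d, A_1,\dots,A_q\}$ unless $(q,d)$ falls into the degenerate range listed in Theorem \ref{thm:degenerate}(iv); the symmetric $(0,4)$-piece gives the analogous conclusion for the $R_i$. The mixed bidegree pieces $(3,1),(2,2),(1,3)$ then pin down the off-diagonal blocks $Q_i$ via the commutation relations $A_jB_k = B_kA_j$ and $B_jB_k = -B_kB_j$ of Lemma \ref{lem:canonical form}, and reassembling the three families yields $X_i = \sum_j a_{ij} S_j$ with $a_{ij} = -a_{ji}$ as required. The exceptional entries of the table correspond precisely to dimensions where the Clifford algebra admits extra symmetries (compare Remark \ref{remarlk:Ono} and Theorem \ref{thm:squared P}), so for each such $(p+q, m)$ one exhibits additional kernel elements explicitly; the pure case at $p+q=2$ is treated directly from the explicit basis matrices $(\ref{eqn:bm for 1_1})$.

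The main obstacle will be the inductive step for the $(4,0)$-piece. The linear version of this identity is precisely what is treated in the proof of Theorem \ref{thm:degenerate}, but lifting from linear forms $f_i(u)$ to quadratic forms $P_i[u]$ couples the problem to the full multiplication $\mathrm{Sym}^2(W^*)\otimes\mathrm{Sym}^2(W^*)\to\mathrm{Sym}^4(W^*)$, whose kernel contains many non-trivial components beyond the obvious skew-symmetric one. Controlling these extra components cleanly will require exploiting the decomposition of $\mathrm{Sym}^2(W^*)$ under $H_{p,q}(\rho)$ together with the branching of the spin representation under $\mathrm{Spin}(p,q) \supset \mathrm{Spin}(q,1)$; it is presumably this technical content that motivates the deferral of the full proof to Sections 5 and 6.
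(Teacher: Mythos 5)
Your skeleton matches the paper's: reduce to the canonical form of Lemma \ref{lem:canonical form}, split the identity $\sum_i S_i[w]X_i[w]\equiv 0$ by bidegree in $(u,v)$, and induct on $p+q$. But two essential ingredients are missing, and without them neither the induction nor the exceptional table can be obtained. First, the paper does not induct on the bare statement of Lemma \ref{lem:key}; it reformulates it as the quantitative claim that $(\sharp)$ holds whenever $m>4(p+q)-8$ (Lemma \ref{lem:5.1}), and the table is then \emph{read off} by comparing $4(p+q)-8$ with the minimal dimension $m_0$ of representations of $R_{p,q}$ from Lemma \ref{lem:T and T'}. This inequality is what makes the induction close: the $(4,0)$-piece concerns the $d$-dimensional representation of $R_{1,q}$ on $\{1_d,A_1,\dots,A_q\}$ with $d=m/2$, and one must check that $d>4(1+q)-8$ follows from $m>4(p+q)-8$ and $p\geq q$. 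You instead propose to invoke "the degenerate range of Theorem \ref{thm:degenerate}(iv)" as the obstruction, but that is the wrong exceptional set: degeneracy (vanishing of $\tilde P$, i.e.\ a \emph{linear} relation $\sum S_i[w]f_i(w)=0$) and failure of $(\sharp)$ (a \emph{quadratic} relation) have genuinely different exceptional loci --- e.g.\ $(p+q,m)=(7,16)$ is non-degenerate but exceptional for $(\sharp)$. The induction hypothesis must be $(\sharp)$ itself, with the dimension bound, for the subrepresentations of $R_{1,q}$, $R_{p-1,q}$ and $R_{p-1,0}$ that appear.

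Second, the mixed bidegree pieces $(3,1)$ and $(1,3)$ are not "pinned down by the commutation relations"; they are the hard part. After substituting the conclusions from the even pieces, they reduce to identities of the shape $X[u](u,v)-(u,u)(u,Xv)=\sum_i Y_i[u](u,B_iv)$, and the paper needs a separate elementary but delicate eigenvalue-and-signature argument (Lemma \ref{lem:5.3}) to conclude that $X$ is scalar when $d>2s$ --- again exactly where the inequality $m>4(p+q)-8$ enters. Your proposed substitute (decomposing $\mathrm{Sym}^2(W^*)$ under $H_{p,q}(\rho)$ and branching the spin representation under $\mathrm{Spin}(p,q)\supset\mathrm{Spin}(q,1)$) is not developed, and it risks circularity: the structure of $\gerh_{p,q}(\rho)$ is only established in Theorem \ref{thm:3.4}, whose proof (\S 6) is independent of, and logically parallel to, this lemma. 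Finally, your plan to exhibit explicit extra kernel elements in each exceptional case is not needed for the lemma as stated (it only asserts $(\sharp)$ \emph{holds} outside the table; strictness of the exceptions is settled later via the prehomogeneous classification in \S 4), so that effort is better spent on the two gaps above.
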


We postpone the proof of Lemma \ref{lem:key} until \S \ref{section:proof of key} and consider the Lie algebra $\gerh_{p,q}(\rho)$. 
The structure of $\gerh_{p,q} ( \rho )$ depends on the structure of the algebras $R_{p,q}$ and $R_{p,q}^+$ given in Lemma \ref{lem:T and T'}.

\begin{thm}
\label{thm:3.4} 
The Lie algebra $\gerh_{p,q}(\rho) $ is isomorphic to the reductive Lie algebra given in the following table: 
\[
\def\arraystretch{1.1}
\begin{array}{|c|c|c|c|}
\hline 
(R_{p,q}, R_{p,q}^+) & (\mathbb K, \mathbb K') & \gerh_{p,q}(\rho) & \{p \bmod 8, q \bmod 8\} \\
\hline 
 &  (\R,\C) & \mathfrak{so}(k,\C) &  \{0,2\}, \{4,6\} \\
\cline{2-4}
\smash{\lower3mm\hbox{$(T, T')$}} & (\C,\R) & \mathfrak{sp}(k,\R) & \{0,7\}, \{2,3\}, \{3,4\}, \{6,7\} \\
\cline{2-4}
 &   (\C,\mathbb H) & \mathfrak{so}^*(2k) &  \{0,3\}, \{2,7\}, \{3,6\}, \{4,7\} \\
\cline{2-4}
 &   (\mathbb H,\C) & \mathfrak{sp}(k,\C) &  \{0,6\}, \{2,4\} \\
\hline
\smash{\lower3mm\hbox{$(T,2T')$}} &   (\R,\R) & \mathfrak{gl}(k,\R) &  \{0,0\}, \{2,2\}, \{4,4\}, \{6,6\} \\
\cline{2-4}
 &   (\mathbb H,\mathbb H) & \mathfrak{gl}(k,\mathbb H) &  \{0,4\}, \{2,6\} \\
\hline
 &   (\R,\R) & \mathfrak{so}(k_1,k_2) &  \{0,1\}, \{1,2\}, \{4,5\}, \{5,6\} \\
\cline{2-4}
(2T,T') &   (\C,\C) & \mathfrak{u}(k_1,k_2) &  \{1,3\}, \{1,7\}, \{3,5\}, \{5,7\} \\
\cline{2-4}
 &   (\mathbb H,\mathbb H) & \mathfrak{sp}(k_1,k_2) &  \{0,5\}, \{1,4\}, \{1,6\}, \{2,5\} \\
\hline
\smash{\lower3mm\hbox{$(2T,2T')$}} &   (\C,\R) & \mathfrak{sp}(k_1,\R)\oplus \mathfrak{sp}(k_2,\R) & \{3,3\}, \{7,7\} \\
\cline{2-4}
 &   (\C,\mathbb H) & \mathfrak{so}^*(2k_1)\oplus \mathfrak{so}^*(2k_2) &  \{3,7\} \\
\hline
\smash{\lower3mm\hbox{$(4T,2T')$}} &   (\R,\R) & \mathfrak{so}(k_1,k_2)\oplus \mathfrak{so}(k_3,k_4) & \{1,1\}, \{5,5\} \\
\cline{2-4}
 &   (\mathbb H,\mathbb H) & \mathfrak{sp}(k_1,k_2)\oplus \mathfrak{sp}(k_3,k_4) &  \{1,5\} \\
\hline
\end{array}
\]
Here $k,k_1,k_2,k_3,k_4$ are the multiplicities of irreducible representations in $\rho$.  
More precisely, if $R_{p,q}=T$, then $R_{p,q}$ has only one irreducible representation  $\rho_1$ and $k$ is the multiplicity of $\rho_1$ in $\rho$;  
if $R_{p,q}=T \oplus T$, then $R_{p,q}$ has two irreducible representations $\rho_1,\rho_2$ and $k_1,k_2$ are the multiplicities of $\rho_1,\rho_2$ in $\rho$;  
if $R_{p,q}=T \oplus T \oplus T \oplus T$, then $R_{p,q}$ has four irreducible representations $\rho_1,\rho_2,\rho_3,\rho_4$ and $R_{p,q}^+$ has two irreducible representations (the  even and the odd half-spin representations) $\Lambda_e$, $\Lambda_o$. We may assume that $\rho_1|_{R^+_{p,q}}=\rho_2|_{R^+_{p,q}}=\Lambda_e$ and $\rho_3|_{R^+_{p,q}}=\rho_4|_{R^+_{p,q}}=\Lambda_o$. 
Then, 
$k_1,k_2,k_3,k_4$ are the multiplicities of $\rho_1,\rho_2,\rho_3,\rho_4$ in $\rho$.
\end{thm}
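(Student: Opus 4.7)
\noindent\textbf{Proof plan for Theorem \ref{thm:3.4}.}

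The plan is to realise $\mathfrak{h}_{p,q}(\rho)$ as the $(-1)$-eigenspace of an anti-involution on the commutant of $\rho(R_{p,q}^+)$, and then read off the answer by a Wedderburn case-analysis indexed by the rows of Lemma \ref{lem:T and T'}.

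First I would prove the reformulation that $X \in M(m;\R)$ lies in $\mathfrak{h}_{p,q}(\rho)$ if and only if $X$ commutes with the image $\rho(R_{p,q}^+)$ of the even subalgebra and additionally satisfies ${}^tXS_1 + S_1X = 0$ (any one of the $S_i$ would do). For the easy direction, from the pair of relations $S_iX = -{}^tXS_i$ and $S_jX = -{}^tXS_j$ one computes $S_jS_iX = XS_jS_i$, and the products $S_jS_i$ generate $\rho(R_{p,q}^+)$. For the converse, writing $S_i = S_1\cdot(S_1S_i)$ and using the commutation or anti-commutation of $S_1$ with each $S_i$ together with the relation ${}^tX = -S_1XS_1$ yields ${}^tXS_i + S_iX = 0$ for every $i$. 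Setting $Z := \mathrm{End}_{R_{p,q}^+}(W)$ and $\iota(X) := S_1\,{}^tX\,S_1$, the map $\iota$ is an anti-involution of $M(m;\R)$ stabilising $Z$ (since $S_1$ commutes or anti-commutes with each generator $e_ie_j$ of $R_{p,q}^+$, so conjugation by $S_1$ preserves the commutant), and the problem becomes the algebraic one of identifying
\[
\mathfrak{h}_{p,q}(\rho) \;=\; \{\,X \in Z : \iota(X) = -X\,\}.
\]

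Next, I would decompose $Z$ explicitly using the Wedderburn structure of $R_{p,q}^+$ from Lemma \ref{lem:T and T'}. Breaking the restriction $\rho|_{R_{p,q}^+}$ into isotypic components writes $Z$ as a product of matrix algebras $M(a_\alpha, D_\alpha)$ over Schur division rings $D_\alpha \in \{\R,\C,\mathbb H\}$; the multiplicities $a_\alpha$ and the rings $D_\alpha$ are exactly the data $k$, $k_i$ and $\mathbb K'$ appearing in the rows of Theorem \ref{thm:3.4}, once one works out for each residue class $(p\bmod 8, q\bmod 8)$ how the irreducibles of $R_{p,q}$ branch to $R_{p,q}^+$ --- either remaining irreducible (Types I and III), or splitting as a sum of two inequivalent $R_{p,q}^+$-irreducibles (Types II, IV, V). The decisive bifurcation is whether the odd element $S_1$ preserves each $R_{p,q}^+$-isotypic component of $W$ or interchanges pairs of them, and this is governed by how $S_1$ interacts with the central idempotents of $R_{p,q}^+$.

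The third step is to identify $\iota$ on each simple factor and read off the real form. When $S_1$ stabilises each block, $\iota$ restricts to an involution of the first kind, and the symmetric bilinear form $B_1(w,w') = {}^tw S_1 w'$ has a well-defined signature on each isotypic component that splits as $(k_1,k_2)$ or $(k_3,k_4)$ across the multiplicity space; the resulting $(-1)$-eigenspace is then $\mathfrak{so}(k_1,k_2)$, $\mathfrak{u}(k_1,k_2)$, $\mathfrak{sp}(k_1,k_2)$, $\mathfrak{so}(k,\C)$, $\mathfrak{sp}(k,\R)$, $\mathfrak{sp}(k,\C)$, or $\mathfrak{so}^*(2k)$ according to the Schur ring and the kind of involution induced. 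When instead $S_1$ swaps two blocks (Types II and the swap part of IV), with respect to $W = W_1 \oplus W_2$ and $S_1 = \left(\begin{smallmatrix} 0 & A \\ B & 0 \end{smallmatrix}\right)$ a direct computation gives $\iota(X_1, X_2) = (A\,{}^tX_2\,B,\, B\,{}^tX_1\,A)$; the $(-1)$-eigenspace is the anti-diagonal $\{(X_1, -B\,{}^tX_1\,A)\}$, isomorphic as a Lie algebra to $\mathfrak{gl}(k, D_\alpha)$, producing the $\mathfrak{gl}(k,\R)$ and $\mathfrak{gl}(k,\mathbb H)$ entries of Type II.

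The main obstacle is the book-keeping. For each of the eleven rows of Lemma \ref{lem:T and T'} one must simultaneously track (i) the Wedderburn type and Schur ring of $R_{p,q}^+$, (ii) the branching of $R_{p,q}$-irreducibles to $R_{p,q}^+$, and (iii) the type (orthogonal, symplectic, or unitary; first or second kind) of the induced anti-involution $\iota$ on each simple block. Once the five broad patterns I--V of Lemma \ref{lem:T and T'} are handled uniformly, each of the eleven rows of the table reduces to a standard identification of a real form of a classical Lie algebra, completing the proof.
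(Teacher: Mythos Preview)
Your plan is correct and is essentially the paper's own approach: Lemma~\ref{lem:6.1} is exactly your reformulation of $\mathfrak{h}_{p,q}(\rho)$ as the $(-1)$-eigenspace of the anti-involution $X\mapsto\rho(r)\,{}^tX\,\rho(r)^{-1}$ on the commutant $\mathcal A=\mathrm{End}_{R_{p,q}^+}(W)$, and \S\S6.1--6.8 then carry out the Wedderburn case-analysis you describe. The one tactical difference worth noting is that the paper does not stick with $r=e_1$ throughout: it exploits the freedom in the choice of odd invertible $r$, taking $r=\hat e_p$ (central in $R_{p,q}$ when $p$ is odd) or $r=\mathbf j$ (the quaternionic unit, in the $(\mathbb H,\mathbb C)$ case) so that the action of $r$ on $W$ becomes an explicit matrix in $\mathcal A$ itself, after which the identification of the real form is immediate --- this shortcut is what keeps each of the eight subsections to a few lines.
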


\begin{rem}\rm 
In \cite[Proposition 4.4.4, Table 4.4.3]{KobayashiYoshino}, Kobayashi and Yoshino gave a list of classical groups, which is quite similar to that in Theorem \ref{thm:3.4}.  
However the relation between these two lists is not clear. 
\end{rem}

We shall prove Theorem \ref{thm:3.4} in \S \ref{section:h_{p.q}}. 
From the proof, we can easily read how the Lie algebra $\mathfrak{so}(p,q)\oplus\gerh_{p,q}(\rho)$ acts on $W$. 
To describe the result, we need some notational preliminaries. 
We denote by $\Lambda_1$ the standard representation of $\mathfrak{gl}(k,\mathbb K)$ on $\mathbb K^k$ for $\mathbb K = \R,\C,\H$. 
We also denote by $\Lambda_1$ the representations of the classical Lie algebras obtained from the following natural inclusions
\begin{gather*}
\mathfrak{so}(k_1,k_2) \hookrightarrow \mathfrak{gl}(k_1+k_2,\R), \quad
\mathfrak{so}(k,\C) \hookrightarrow \mathfrak{gl}(k,\C), \quad
\mathfrak{so}^*(2k) \hookrightarrow \mathfrak{gl}(k,\H), \\
\mathfrak{sp}(k_1,k_2) \hookrightarrow \mathfrak{gl}(k_1+k_2,\H), \quad
\mathfrak{sp}(k,\R) \hookrightarrow \mathfrak{gl}(2k,\R), \quad
\mathfrak{sp}(k,\C) \hookrightarrow \mathfrak{gl}(2k,\C), \\
\mathfrak{u}(k_1,k_2) \hookrightarrow \mathfrak{gl}(k_1+k_2,\C).
\end{gather*}
In case $R^+_{p,q}$ is a simple algebra, then we denote by $\Lambda$ the representation of $\gerk_{p,q}\cong \mathfrak{so}(p,q)$ induced by the unique irreducible representation of $R^+_{p,q}$. Namely $\Lambda$ is the spin representation. 
In case  $R^+_{p,q}$ is the direct sum of two simple algebras, then we denote by $\Lambda_e$ and $\Lambda_o$ the irreducible representations of  $\gerk_{p,q}\cong \mathfrak{so}(p,q)$ induced by the two irreducible representations of $R^+_{p,q}$. Namely $\Lambda_e$  and $\Lambda_o$ are the even and odd half-spin representations. 
Moreover we denote by $1$ the trivial representation of a Lie algebra.

\begin{thm}
\label{thm:3.5}
Put $\gerg'_{p,q}(\rho)=\mathfrak{so}(p,q)\oplus\gerh_{p,q}(\rho)$. 
Then 
$\mathfrak{g}'_{p,q} ( \rho)$ acts on the representation space $W$ of $\rho$ as follows.
{\small
\[
\def\arraystretch{1.2}
\hspace*{-0.3cm}
\begin{array}{|c|c|c|c|} \hline 
(R_{p,q} , R_{p,q}^+) & ({\Bbb K}, {\Bbb K}') & \{p \bmod 8, q \bmod 8\}   & \text{\em Representation of $ \mathfrak{g}'_{p,q}{(\rho)}$} \\
\hline 
 & ({\Bbb R}, {\Bbb C} ) &   \{0,2\}, \{4,6\}  & ( \mathfrak{so}(p,q) \oplus \mathfrak{so}(k,\C), \Lambda \otimes \Lambda_1)\\ \cline{2-4} 
 \smash{\lower5mm\hbox{$(T, T')$}}  & ({\Bbb C}, {\Bbb R} )  &  
\begin{array}{c} \{0,7\}, \{2,3\} \\[-1ex] \{3,4\}, \{6,7\} \end{array}  & ( \mathfrak{so}(p,q) \oplus \mathfrak{sp}(k,\R) , \Lambda \otimes \Lambda_1 ) \\ 
\cline{2-4} 
       & ({\Bbb C}, {\Bbb H} ) & \begin{array}{c} \{0,3\}, \{2,7\} \\[-1ex] \{3,6\}, \{4,7\}  \end{array} & ( \mathfrak{so}(p,q) \oplus \mathfrak{so}^*(2k) , \Lambda \otimes \Lambda_1 ) \\ 
\cline{2-4} 
       & ({\Bbb H}, {\Bbb C} ) &  \{0,6\}, \{2,4\}  & ( \mathfrak{so}(p,q) \oplus \mathfrak{sp}(k,\C), \Lambda \otimes \Lambda_1) \\ 
\hline
\smash{\lower3mm\hbox{$(T,2T')$}}   & ({\Bbb R}, {\Bbb R})  & \begin{array}{c} \{0,0\}, \{2,2\}  \\[-1ex]  \{4,4\}, \{6,6\}  \end{array} & ( \mathfrak{so}(p,q) \oplus \mathfrak{gl}(k,\R) , \Lambda_e \otimes \Lambda_1 + \Lambda_o \otimes \Lambda_1^*) \\
\cline{2-4} 
                   & ({\Bbb H}, {\Bbb H})  &  \{0,4\}, \{2,6\}  & ( \mathfrak{so}(p,q) \oplus \mathfrak{gl}(k,\H) , \Lambda_e \otimes \Lambda_1 + \Lambda_o \otimes \Lambda_1^*) \\ 
\hline
 & ({\Bbb R}, {\Bbb R}) & \begin{array}{c}  \{0,1\}, \{1,2\} \\[-1ex]  \{4,5\}, \{5,6\}  \end{array} & ( \mathfrak{so}(p,q) \oplus \mathfrak{so}(k_1,k_2) , \Lambda \otimes \Lambda_1 )\\
\cline{2-4} 
(2T, T')   & ({\Bbb C}, {\Bbb C}) & \begin{array}{c} \{1,3\}, \{1,7\}  \\[-1ex]  \{3,5\}, \{5,7\}  \end{array} & ( \mathfrak{so}(p,q) \oplus \mathfrak{u}(k_1,k_2) , \Lambda \otimes \Lambda_1)\\
\cline{2-4}    
                 & ({\Bbb H}, {\Bbb H}) &   \begin{array}{c} \{0,5\}, \{1,4\}  \\[-1ex]  \{1,6\}, \{2,5\}  \end{array} &( \mathfrak{so}(p,q) \oplus \mathfrak{sp}(k_1,k_2) , \Lambda \otimes \Lambda_1 ) \\ 
\hline 
\smash{\lower3mm\hbox{$(2T,2T')$}}   & ({\Bbb C}, {\Bbb R}) &  \{3,3\}, \{7,7\}  & ( \mathfrak{so}(p,q) \oplus \mathfrak{sp}(k_1,\R) \oplus \mathfrak{sp}(k_2,\R) , \Lambda_e \otimes \Lambda_1 \otimes 1 + \Lambda_o \otimes 1 \otimes \Lambda_1 ) \\
\cline{2-4} 
              & ({\Bbb C}, {\Bbb H}) &  \{3,7\}  & ( \mathfrak{so}(p,q) \oplus \mathfrak{so}^*(2k_1) \oplus \mathfrak{so}^*(2k_2) , \Lambda_e \otimes \Lambda_1 \otimes 1 + \Lambda_o \otimes 1 \otimes \Lambda_1 )\\  
\hline     
 \smash{\lower3mm\hbox{$(4T,2T')$}}    & ({\Bbb R}, {\Bbb R}) & \{1,1\}, \{5,5\}  & ( \mathfrak{so}(p,q) \oplus \mathfrak{so}(k_1,k_2) \oplus \mathfrak{so}(k_3,k_4) , \Lambda_e \otimes \Lambda_1 \otimes 1 + \Lambda_o \otimes 1 \otimes \Lambda_1 ) \\
\cline{2-4} 
                & ({\Bbb H}, {\Bbb H}) &  \{1,5\}  & (\mathfrak{so}(p,q) \oplus \mathfrak{sp}(k_1,k_2) \oplus \mathfrak{sp}(k_3,k_4) , \Lambda_e \otimes \Lambda_1 \otimes 1 + \Lambda_o \otimes 1 \otimes \Lambda_1 ) \\
\hline                  
\end{array} 
\]}
\end{thm}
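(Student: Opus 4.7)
The plan is to decompose $W$ with respect to the action of the subalgebra $\rho(R^+_{p,q}) \subset \mathrm{End}_{\R}(W)$ and read off the representation of $\gerg'_{p,q}(\rho) = \gers\gero(p,q) \oplus \gerh_{p,q}(\rho)$ from the induced action on each isotypic component. By Lemma \ref{lem:equivariance}, the $\gers\gero(p,q)$-action on $W$ is the one induced by the restriction $\rho^+ := \rho|_{R^+_{p,q}}$; so by Lemma \ref{lem:T and T'}, $W$ is a direct sum of copies of the spin representation $\Lambda$ when $R^+_{p,q}$ is simple, and of copies of the two half-spin representations $\Lambda_e, \Lambda_o$ when $R^+_{p,q} \cong T' \oplus T'$.

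Next, I would use the double commutant theorem, applied over the commutant $\mathbb{K}'$ of the irreducible $R^+_{p,q}$-module listed in Lemma \ref{lem:T and T'}, to express $W$ as a tensor product of the irreducible $R^+_{p,q}$-module(s) with multiplicity space(s) $V$ (or $V_e, V_o$ in the non-isotypic case). The key algebraic fact is that the defining condition ${}^t X S_i + S_i X = 0$ for $X \in \gerh_{p,q}(\rho)$, together with $S_i = {}^t S_i$ and $S_i^2 = 1_m$, yields $S_i {}^t X = - X S_i$ and hence $[X, S_i S_j] = 0$ for all $i, j$. Thus $\gerh_{p,q}(\rho)$ centralizes $\rho(R^+_{p,q})$, preserves the isotypic decomposition, and acts only on the multiplicity spaces. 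Matching ranks with Theorem \ref{thm:3.4}, these multiplicity spaces realize the standard representations $\Lambda_1$ of the classical Lie-algebra factors of $\gerh_{p,q}(\rho)$.

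The delicate feature is the appearance of the dual representation $\Lambda_1^*$ in Type II $(T, 2T')$ and the external-direct-sum structure in Types IV and V. In each of these cases $\rho|_{R^+_{p,q}}$ has two isotypic components $\Lambda_e, \Lambda_o$, while some basis matrix $S_i$ lies in $R_{p,q} \setminus R^+_{p,q}$ and interchanges them. Writing $X \in \gerh_{p,q}(\rho)$ in block form with respect to $W = W_e \oplus W_o$, the equation ${}^t X S_i + S_i X = 0$ for this off-diagonal $S_i$ forces the block of $X$ on $V_o$ to be the negative transpose of the block on $V_e$ with respect to the pairing induced by $S_i$. In Type II these two blocks combine into a single $\mathfrak{gl}(k, \mathbb{K})$ acting as $\Lambda_1$ on $V_e$ and as $\Lambda_1^*$ on $V_o$; in Types IV and V the interchanging element is further block-diagonal on the subdivisions $(k_1, k_2)$ and $(k_3, k_4)$, so the two half-spin pieces are governed by independent Lie-algebra summands, which yields the external direct sum $\Lambda_e \otimes \Lambda_1 \otimes 1 + \Lambda_o \otimes 1 \otimes \Lambda_1$.

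The main obstacle is the case-by-case bookkeeping: in each of the eleven rows one must check that the multiplicities $k$ or $(k_i)$ of Theorem \ref{thm:3.4} are attached to the correct half-spin component, and that complex and quaternionic Schur structures are correctly translated into the real representations $\Lambda_1$ and $\Lambda_1^*$. Once this matching is in place, every entry of the table follows from the standard realizations of the classical Lie algebras as subalgebras of $\mathfrak{gl}(V)$.
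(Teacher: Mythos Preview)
Your approach is essentially the same as the paper's: in \S\ref{section:h_{p.q}} the paper proves Lemma~\ref{lem:6.1}, which says exactly that $\gerh_{p,q}(\rho)$ sits inside the commutant $\calA$ of $\rho(R^+_{p,q})$ and is cut out there by ${}^T\!X\rho(r)+\rho(r)X=0$ for a single odd invertible $r$; it then runs through the rows of Lemma~\ref{lem:T and T'}, writes down $\calA$ and the action of a convenient $r$ explicitly, and reads off both $\gerh_{p,q}(\rho)$ and its action on the multiplicity space. Your outline captures all of this.

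One point to correct: your treatment of Types IV and V is slightly off. You describe an ``interchanging element'' that is ``further block-diagonal'', but in these types there is \emph{no} interchanging. When $p$ and $q$ are both odd (Types IV and V), the product $\hat e_p\hat f_q$ is central in all of $R_{p,q}$, not just in $R^+_{p,q}$; hence the central idempotents $c^\pm=\tfrac12(1\pm\hat e_p\hat f_q)$ of $R^+_{p,q}$ are also central in $R_{p,q}$, and every odd element (in particular the $r$ you use) preserves each isotypic component $W^\pm$ separately. That is precisely why $\gerh_{p,q}(\rho)$ decouples into two independent summands acting on $W^+$ and $W^-$ with no contragredient linkage---contrast this with Type II, where $p$ and $q$ are even, $e_1c^+=c^-e_1$, and the resulting intertwining forces the single $\gerg\gerl(k,\mathbb K)$ with its $\Lambda_1\oplus\Lambda_1^*$ action. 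Once you fix this distinction, your ``matching ranks with Theorem~\ref{thm:3.4}'' step becomes the honest computation: the embedding $\gerh_{p,q}(\rho)\hookrightarrow\calA$ is exactly the standard realization of each classical Lie algebra in $\gerg\gerl$ of its natural module, so $\Lambda_1$ is forced.
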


%%%%%%%%%%%%%%%%%%%%%%%%%%%%%%%%%%%%%%%%%%%

\section{Clifford quartic forms and prehomogeneous vector spaces}

\label{section:classification}

Let $\rho$ be a representation of $R_{p,q}$ on a real vector space $W$. 
As in the previous sections, we put $m=\dim W$ and $\gerg_{p,q}'(\rho)=\mathfrak{so}(p,q)\oplus \gerh_{p,q}(\rho)$.  

\subsection{Classification of the degenerate, generic and prehomogeneous cases}
The aim of  this subsection is to prove the following by using Theorems \ref{thm:3.1}, \ref{thm:3.4} and \ref{thm:3.5}: 
\begin{enumerate}
\def\labelenumi{(\thesection.\alph{enumi})}
\item For the cases classified in Theorem \ref{thm:degenerate}, namely, for $(p,q,m)=(2,1,2)$, $(2,2,4)$, $(3,1,4)$, $(3,3,8)$, $(5,1,8)$, $(5,5,16)$, $(9,1,16)$,  and $(p,q)=(1,1)$ with $S_1 = \pm S_2$, the Clifford quartic forms actually vanish. 
This proves that the statement (iv) implies the statement (ii) in Theorem \ref{thm:degenerate} and completes the classification of degenerate representations. 
\label{prob:degenerate}
\item For the non-degenerate cases which belong to the exceptions in Theorem \ref{thm:3.1}, namely, for $(p,q,m)=(3,0,4)$, $(2,1,4)$, $(4,0,8)$, $(3,1,8)$, $(2,2,8)$, $(5,0,8)$, $(4,1,8)$, $(3,2,8)$, $(6,0,16)$, $(5,1,16)$, $(4,2,16)$, $(3,3,16)$, $(7,0,16)$, $(6,1,16)$, $(5,2,16)$, $(4,3,16)$, $(8,0,16)$, $(7,1,16)$, $(5,3,16)$, $(4,4,16)$, $(9,0,16)$, $(8,1,16)$, $(5,4,16)$, $(10,0,32)$, $(9,1,32)$, $(8,2,32)$, $(7,3,32)$, $(6,4,32)$, $(5,5,32)$, $(10,1,32)$, $(9,2,32)$, $(6,5,32)$,  the Lie algebra $\gerg_{p,q}(\rho)$ is strictly larger than $\gerg_{p,q}'(\rho)$. We call these representations $\rho$ {\it exceptional\/}. The representations that are non-degenerate and not exceptional are called {\it generic\/}. For generic representations we have $\gerg_{p,q}(\rho)=\gerg_{p,q}'(\rho)$ by Theorem \ref{thm:3.1}. 
\label{prob:exceptional}
\item  We classify the non-degenerate representations $\rho$ of $R_{p,q}$ for which the associated Clifford quartic form is a relative invariant of some prehomogeneous vector space. 	  
\label{prob:classification}
\end{enumerate}

A prehomogeneous vector space is, by definition, a triple $(\mathbf{G},\pi,\mathbf{W})$ of a connected linear algebraic group $\mathbf G$, a finite-dimensional vector space $\mathbf W$ and a rational representation $\pi$ of $\mathbf G$ on $\mathbf V$ with a Zariski-open $\mathbf G$-orbit. 
We often suppress $\pi$ or $\mathbf W$ and write $(\mathbf{G},\mathbf{W})$ or $(\mathbf{G},\pi)$, if there is no fear of confusion. 
We say that a finite dimensional representation $(\gerg,\pi,W)$ of a real Lie algebra $\gerg$ is a prehomogeneous vector space, if it is a real form of the infinitesimal representation of a prehomogeneous vector space. 
In the following, by abuse of notation, we use a same symbol to denote a representation of a linear algebraic group (or a Lie group) $G$ and the corresponding infinitesimal representation of $\mathrm{Lie}(G)$.   
This convention also applies to the table in Theorem \ref{thm:3.5}.  
For basic results on prehomogeneous vector spaces, refer to \cite{KimuraBook} and \cite{Sato-Kimura}. 

Denote by $\tau$ the representation of $\gerg_{p,q}'(\rho)$ on $W$ given in Theorem \ref{thm:3.5} and consider the triple $(\mathfrak{gl}(1,\R)\oplus\gerg'_{p,q}(\rho),\tau,W)$,  where $\mathfrak{gl}(1,\R)$ acts as scalar multiplication on $W$. 
Note that if $(\mathfrak{gl}(1,\R)\oplus\gerg'_{p,q}(\rho),\tau,W)$ is a prehomogeneous vector space, then $(\mathfrak{gl}(1,\R)\oplus\gerg_{p,q}(\rho),W)$ is a prehomogeneous vector space. 

First we consider the low-dimensional cases $p+q\leq6$ and $p+q=8$, where the (half-) spin representations of $\mathfrak{so}(p,q)$ are equivalent to the standard or rather simple tensor representations of classical Lie algebras (see \cite[Chapter X, \S 6.4]{Helgason}). 

\subsubsection*{The case $p+q=1$}
In this case, the representation $\rho$ of $R_{1,0}$ is generic and the representation $\tau$  in question is of the form $(\mathfrak{gl}(1,\R)\oplus\mathfrak{so}(k_1,k_2), \Lambda_1,\R^k)$ $(k=k_1+k_2)$, This is a typical example of prehomogeneous vector spaces. 
The fundamental relative invariant is a quadratic form of signature $(k_1,k_2)$ and the Clifford quartic form is its square. 

\subsubsection*{The case $p+q=2$}
In this case, the representation $\tau$ is of the form 
\begin{eqnarray*}
(p,q)=(1,1) & & (\mathfrak{gl}(1,\R)\oplus\mathfrak{so}(1,1)\oplus\mathfrak{so}(k_1,k_2)\oplus\mathfrak{so}(k_3,k_4), \tau,\R^k)\quad (k=k_1+k_2+k_3+k_4), \\ 
              & & \quad \cong (\mathfrak{gl}(1,\R)\oplus\mathfrak{so}(k_1,k_2),\Lambda_1, \R^{k_1+k_2}) \oplus (\mathfrak{gl}(1,\R)\oplus\mathfrak{so}(k_3,k_4),\Lambda_1, \R^{k_3+k_4}), \\ 
(p,q)=(2,0) & & (\mathfrak{gl}(1,\R)\oplus\mathfrak{u}(1)\oplus\mathfrak{so}(k,\C), \C^k)=(\mathfrak{gl}(1,\C)\oplus\mathfrak{so}(k,\C), \Lambda_1, \C^k).
\end{eqnarray*}
These are prehomogeneous vector spaces. 
The Clifford quartic form is the product of two quadratic forms in the former case and the norm of a quadratic form with coefficients in $\C$ in the latter case. 
The representation $\rho$ of $R_{1,1}$ is degenerate if $(k_1+k_2)(k_3+k_4)=0$, and non-degenerate and generic otherwise. 
We have already seen the vanishing of the Clifford quartic forms in the case $(k_1+k_2)(k_3+k_4)=0$ in the proof of (iii) $\Rightarrow$ (iv) of Theorem \ref{thm:degenerate}. 
The representation $\rho$ of $R_{2,0}$ is always non-degenerate and generic. 

\subsubsection*{The case $p+q=3$}
In this case, the representation $\tau$ is of the form 
\begin{eqnarray*}
(p,q)=(2,1) & & (\mathfrak{gl}(1,\R)\oplus\mathfrak{so}(2,1)\oplus\mathfrak{so}(k_1,k_2), \tau,\R^{2k})\quad (k=k_1+k_2), \\ 
              & & \quad \cong (\mathfrak{gl}(2)\oplus\mathfrak{so}(k_1,k_2), \Lambda_1\otimes \Lambda_1,M(2,k;\R)), \\ 
(p,q)=(3,0) & & (\mathfrak{gl}(1,\R)\oplus\mathfrak{so}(3)\oplus\mathfrak{so}^*(2k), \C^{2k}) \\
 & & \quad \cong (\mathfrak{gl}(1,\R)\oplus\mathfrak{sl}(1,\H)\oplus\mathfrak{so}^*(2k), \Lambda_1\otimes \Lambda_1, \H^{k}).
\end{eqnarray*}
These are prehomogeneous vector spaces. 
The representation $\rho$ of $R_{2,1}$ is degenerate for $k=1$, exceptional for $k=2$ (and then $\gerg_{2,1}(\rho)=\mathfrak{sl}(2,\R)\oplus \mathfrak{sl}(2,\R) \supsetneq \gerg'_{2,1}(\rho)$),  and generic for $k \geq 3$. 
The Clifford quartic form actually vanishes for $k=1$, since the prehomogeneous vector space $(\mathfrak{gl}(2,\R), \Lambda,\R^2)$ has no non-trivial relative invariants.  
The representation $\rho$ of $R_{3,0}$ is exceptional for $k=1$ (and then $\gerg_{3,0}(\rho)=\mathfrak{sl}(1,\H)\oplus \mathfrak{sl}(1,\H)\supsetneq \gerg'_{3,0}(\rho)$),  and generic for $k \geq 2$. 
In the two exceptional cases $(p,q,m)=(2,1,4)$, $(3,0,4)$, the Clifford quartic forms are the squares of quadratic forms (see Theorem \ref{thm:squared P}). 

\subsubsection*{The case $p+q=4$}
In this case, the representation  $\tau$ is of the form 
\begin{eqnarray*}
(p,q)=(2,2) & & (\mathfrak{gl}(1,\R)\oplus\mathfrak{so}(2,2)\oplus\mathfrak{gl}(k), \tau,  \R^{4k}), \\ 
              & & \quad \cong (\mathfrak{gl}(1,\R)\oplus\mathfrak{sl}(2,\R)\oplus\mathfrak{sl}(2,\R)\oplus\mathfrak{gl}(k,\R),  \\
     & & \hspace{2cm} (\Lambda_1\otimes 1 \otimes \Lambda_1) \oplus (1\otimes \Lambda_1 \otimes \Lambda_1^*), M(2,k;\R)\oplus M(2,k;\R)), \\ 
(p,q)=(3,1) & & (\mathfrak{gl}(1,\R)\oplus\mathfrak{so}(3,1)\oplus\mathfrak{u}(k_1,k_2), \tau, \C^{2k}) \\
 & & \quad \cong (\mathfrak{gl}(1,\R)\oplus\mathfrak{sl}(2,\C)\oplus\mathfrak{u}(k_1,k_2), \Lambda_1\otimes \Lambda_1, M(2,k_1+k_2;\C)), \\
(p,q)=(4,0) & & (\mathfrak{gl}(1,\R)\oplus\mathfrak{so}(4,0)\oplus\mathfrak{gl}(k,\mathbb{H}), \tau, \H^{2k}), \\ 
              & & \quad \cong (\mathfrak{gl}(1,\R)\oplus\mathfrak{sl}(1,\H)\oplus\mathfrak{sl}(1,\H)\oplus\mathfrak{gl}(k,\mathbb{H}),  \\
     & & \hspace{2cm} (\Lambda_1\otimes 1 \otimes \Lambda_1) \oplus (1\otimes \Lambda_1 \otimes \Lambda_1^*), M(1,k;\H)\oplus M(1,k;\H)). 
\end{eqnarray*}
These are prehomogeneous vector spaces. 
The representation $\rho$ of $R_{2,2}$ is degenerate for $k=1$, exceptional for $k=2$ (and then $\gerg_{2,2}(\rho)=\mathfrak{sl}(2,\R) \oplus \mathfrak{sl}(2,\R) \oplus \mathfrak{sl}(2,\R) \oplus \mathfrak{sl}(2,\R) \oplus \mathfrak{gl}(1,\R)$),  and generic for $k \geq 3$.
The representation $\rho$ of $R_{3,1}$ is degenerate for $k=1$, exceptional for $k=2$ (and then $\gerg_{3,1}(\rho)=\mathfrak{sl}(2,\C) \oplus \mathfrak{sl}(2,\C) \oplus \geru(1)$),  and generic for $k \geq 3$.
The representation $\rho$ of $R_{4,0}$ is exceptional for $k=1$ (and then $\gerg_{4,0}(\rho)=\mathfrak{sl}(1,\H) \oplus \mathfrak{sl}(1,\H) \oplus \mathfrak{sl}(1,\H) \oplus \mathfrak{sl}(1,\H) \oplus \mathfrak{gl}(1,\R)$),  and generic for $k \geq 2$.
The Clifford quartic forms actually vanish for $(p,q)=(2,2), (3,1)$ and $k=1$, since the prehomogeneous vector space $(\mathfrak{gl}(2,\mathbb K), \Lambda_1,\mathbb K^2)$ $(\mathbb K=\R, \C)$ has no non-trivial relative invariants.  
The Clifford quartic form for an exceptional representation is the product of two quadratic forms in 4 variables or the square of the absolute value of a complex quadratic form in 4 variables according as $(p,q,m)=(2,2,4)$, $(4,0,4)$ or $(p,q,m)=(3,1,4)$.

\subsubsection*{The case $p+q=5$}
In this case, the representation $\tau$ is of the form 
\begin{eqnarray*}
(p,q)=(3,2) & & (\mathfrak{gl}(1,\R)\oplus\mathfrak{so}(3,2)\oplus\mathfrak{sp}(k,\R),\tau, \R^{8k}), \\ 
              & & \quad \cong (\mathfrak{gl}(1,\R)\oplus\mathfrak{sp}(2,\R)\oplus\mathfrak{sp}(k,\R), \Lambda_1\otimes \Lambda_1, M(4,2k;\R)), \\ 
(p,q)=(4,1) & & (\mathfrak{gl}(1,\R)\oplus\mathfrak{so}(4,1)\oplus\mathfrak{sp}(k_1,k_2),\tau, \H^{2k}) \\
 & & \quad \cong (\mathfrak{gl}(1,\R)\oplus\mathfrak{sp}(1,1)\oplus\mathfrak{sp}(k_1,k_2),  \Lambda_1\otimes \Lambda_1, M(2,k;\H)), \\
(p,q)=(5,0) & & (\mathfrak{gl}(1,\R)\oplus\mathfrak{so}(5)\oplus\mathfrak{sp}(k_1,k_2), \tau, \H^{2k}) \\
 & & \quad \cong (\mathfrak{gl}(1,\R)\oplus\mathfrak{sp}(2)\oplus\mathfrak{sp}(k_1,k_2),  \Lambda_1\otimes \Lambda_1, M(2,k;\H)).
\end{eqnarray*}
By the classification of irreducible prehomogeneous vector spaces (\cite{Sato-Kimura}), these are prehomogeneous vector spaces  only when $k=1$ (the unique  exceptional case) and then it is easy to see that the Clifford quartic form is the square of a quadratic forms (cf.\ Theorem \ref{thm:squared P}). 
Hence  we have  
$\gerg_{3,2}(\rho)=\mathfrak{so}(4,4)$, $\gerg_{4,1}(\rho)=\mathfrak{so}(4,4)$, and $\gerg_{5,0}(\rho)=\mathfrak{so}(8)$. They are strictly larger than $\gerg'_{p,q}(\rho)$. 

\subsubsection*{The case $p+q=6$}
In this case, the representation $\tau$ is of the form 
\begin{eqnarray*}
(p,q)=(3,3) & & (\mathfrak{gl}(1,\R)\oplus\mathfrak{so}(3,3)\oplus\mathfrak{sp}(k_1,\R)\oplus\mathfrak{sp}(k_2,\R), \tau, \R^{8k}), \\ 
              & & \quad \cong (\mathfrak{gl}(1,\R)\oplus\mathfrak{sl}(4)\oplus\mathfrak{sp}(k_1,\R)\oplus\mathfrak{sp}(k_2,\R), \\ 
 & & \hspace{2cm} (\Lambda_1\otimes \Lambda_1 \otimes 1) \oplus (\Lambda_1^* \otimes 1 \otimes \Lambda_1), M(4,2k_1;\R)\oplus M(4,2k_2;\R)), \\ 
(p,q)=(4,2) & & (\mathfrak{gl}(1,\R)\oplus\mathfrak{so}(4,2)\oplus\mathfrak{sp}(k,\C),  \tau, \C^{8k}), \\ 
              & & \quad \cong (\mathfrak{gl}(1,\R)\oplus\mathfrak{su}(2,2)\oplus\mathfrak{sp}(k,\C), \Lambda_1\otimes \Lambda_1, M(4,2k;\C)), \\ 
(p,q)=(5,1) & & (\mathfrak{gl}(1,\R)\oplus\mathfrak{so}(5,1)\oplus\mathfrak{sp}(k_1,k_2)\oplus\mathfrak{sp}(k_3,k_4),  \tau, \H^{2k}) \\
 & & \quad \cong (\mathfrak{gl}(1,\R)\oplus\mathfrak{sl}(2,\H)\oplus\mathfrak{sp}(k_1,k_2)\oplus\mathfrak{sp}(k_3,k_4), \\ 
 & & \hspace{2cm} (\Lambda_1\otimes \Lambda_1 \otimes 1) \oplus (\Lambda_1^* \otimes 1 \otimes \Lambda_1), M(2,k_1+k_2;\H)\oplus M(2,k_3+k_4;\H)), \\
(p,q)=(6,0) & & (\mathfrak{gl}(1,\R)\oplus\mathfrak{so}(6)\oplus\mathfrak{sp}(k,\C),  \tau, \C^{8k}) \\
 & & \quad \cong (\mathfrak{gl}(1,\R)\oplus\mathfrak{su}(4)\oplus\mathfrak{sp}(k,\C), \Lambda_1\otimes \Lambda_1,  M(4,2k;\C)).
\end{eqnarray*}
These are prehomogeneous vector spaces  only when 
\[
\begin{cases}
k=1 & ((p,q)=(4,2), (6,0)) \\
k_1=k_2=1\ \text{or}\ k_1k_2=0  & ((p,q)=(3,3)) \\
k_1+k_2=k_3+k_4=1\ \text{or}\ (k_1+k_2)(k_3+k_4)=0  & ((p,q)=(5,1)). 
\end{cases}
\]
The non-prehomogeneous cases are characterized as the cases where $m=\dim_{\R}W>16$, and the representation $\rho$ is mixed over $\C$ in the sense in Definition \ref{def:pure and mixed}.
For $(p,q)=(3,3), (5,1)$ and $m=8$, the Clifford quartic form vanishes since the prehomogeneous vector space $(GL(1) \times SL(4) \times SL(2), \Lambda_1\otimes\Lambda_1)$ has no non-trivial relative invariants. 
For the case $m=16$, then $\gerg_{p,q}(\rho)\otimes\C$ is isomorphic to $\mathfrak{sl}(4,\C) \oplus \mathfrak{sl}(4,\C)$ or $\mathfrak{gl}(1,\C) \oplus \mathfrak{sl}(4,\C) \oplus \mathfrak{sl}(2,\C) \oplus \mathfrak{sl}(2,\C)$ according as $\rho$ is pure over $\C$ or mixed over $\C$ and is strictly larger than $\gerg_{p,q}'(\rho)\otimes\C$.  

\subsubsection*{The case $p+q=8$}
In this case, the representation $\tau$ is of the form 
\begin{eqnarray*}
(p,q)=(4,4) & & (\mathfrak{gl}(1,\R)\oplus\mathfrak{so}(4,4)\oplus\mathfrak{gl}(k,\R), \tau, \R^{16k})  \\ 
& & \quad \cong (\mathfrak{gl}(1,\R)\oplus\mathfrak{so}(4,4)\oplus\mathfrak{gl}(k,\R), (\Lambda_e\otimes \Lambda_1) \oplus (\Lambda_o\otimes \Lambda_1^*), M(8,k;\R)\oplus M(8,k;\R)),  \\ 
(p,q)=(5,3) & & (\mathfrak{gl}(1,\R)\oplus\mathfrak{so}(5,3)\oplus\mathfrak{u}(k_1,k_2),  \tau, \C^{8k}) \\ 
& & \quad \cong (\mathfrak{gl}(1,\R)\oplus\mathfrak{so}(5,3)\oplus\mathfrak{u}(k_1,k_2),  \Lambda \otimes \Lambda_1, M(8,k;\C)), \\ 
(p,q)=(6,2) & & (\mathfrak{gl}(1,\R)\oplus\mathfrak{so}(6,2)\oplus\mathfrak{gl}(k,\H),  \tau,  \H^{8k}) \\ 
& & \quad \cong (\mathfrak{gl}(1,\R)\oplus\mathfrak{so}(6,2)\oplus\mathfrak{gl}(k,\H),  (\Lambda_e\otimes \Lambda_1) \oplus (\Lambda_o\otimes \Lambda_1^*),  M(4,k;\H) \oplus M(4,k;\H)), \\ 
(p,q)=(7,1) & & (\mathfrak{gl}(1,\R)\oplus\mathfrak{so}(7,1)\oplus\mathfrak{u}(k_1,k_2),  \tau,  \C^{8k}) \\
& & \quad \cong (\mathfrak{gl}(1,\R)\oplus\mathfrak{so}(7,1)\oplus\mathfrak{u}(k_1,k_2),  \Lambda\otimes \Lambda_1,  M(8,k;\C)), \\
(p,q)=(8,0) & & (\mathfrak{gl}(1,\R)\oplus\mathfrak{so}(8)\oplus\mathfrak{gl}(k,\R),  \tau, \R^{16k}) \\
& & \quad \cong (\mathfrak{gl}(1,\R)\oplus\mathfrak{so}(8)\oplus\mathfrak{gl}(k,\R),  (\Lambda_e\otimes \Lambda_1) \oplus (\Lambda_o\otimes \Lambda_1^*), M(8,k;\R)\oplus M(8,k;\R)).
\end{eqnarray*}
By the classification of irreducible prehomogeneous vector spaces (\cite{Sato-Kimura}, \cite{KimuraI}, \cite{KimuraII}), these are prehomogeneous vector spaces  only when $k=1$ and $(p,q)\ne(6,2)$ (the unique  exceptional case) and then the Clifford quartic form is the product of two quadratic forms in 8 variables or the square of the absolute value of a complex quadratic form in 8 variables according as $(p,q,m)=(4,4,16),(8,0,16)$ or $(p,q,m)=(5,3,16),(7,1,16)$. 
Hence we have 
$\gerg_{4,4}(\rho)=\mathfrak{so}(4,4)\oplus\mathfrak{so}(4,4)\oplus\mathfrak{gl}(1,\R)$, $\gerg_{5,3}(\rho)=\mathfrak{so}(8,\C)\oplus\geru(1)$, $\gerg_{7,1}(\rho)=\mathfrak{so}(8,\C)\oplus\geru(1)$, and $\gerg_{8,0}(\rho)=\mathfrak{so}(8)\oplus\mathfrak{so}(8)\oplus\mathfrak{gl}(1,\R)$. They are strictly larger than $\gerg'_{p,q}(\rho)$.

\subsubsection*{The case $p+q=7$ or $p+q \geq 9$}
By \cite{Sato-Kimura} (see also \cite[Theorem 1.5]{KimuraI}), the irreducible prehomogeneous vector spaces (defined over $\C$) containing the spin or half-spin representations of $\mathbf{Spin}(p+q)$ ($p+q=7,\ \text{or}\ \geq 9$) are given by 
\begin{equation}
\label{eqn:spin pv}
\begin{array}{ll}
(\mathbf{Spin}(7)\times \mathbf{GL}(k),\Lambda \otimes \Lambda_1) & (k=1,2,3,5,6,7), \\
(\mathbf{Spin}(9)\times \mathbf{GL}(k),\Lambda \otimes \Lambda_1) & (k=1,15), \\
(\mathbf{Spin}(10)\times \mathbf{GL}(k),\Lambda_\sharp \otimes \Lambda_1) & (k=1,2,3,13,14,15,\ \sharp=e,o), \\
(\mathbf{Spin(11)}\times \mathbf{GL}(k),\Lambda \otimes \Lambda_1) & (k=1,31), \\
(\mathbf{Spin}(12)\times \mathbf{GL}(k),\Lambda_\sharp \otimes \Lambda_1) & (k=1,31,\ \sharp=e,o), \\
(\mathbf{Spin(14)}\times \mathbf{GL(k)},\Lambda_\sharp \otimes \Lambda_1) & (k=1,63,\ \sharp=e,o), \\
(\mathbf{Spin}(p+q)\times \mathbf{GL}(k),\Lambda_\sharp \otimes \Lambda_1) & (k\geq \deg\Lambda_\sharp,\ \sharp=e,o).
\end{array}
\end{equation}

First note that, by Theorem \ref{thm:3.1}, there exists only one degenerate representation for $p+q=7$ or $\geq 9$, namely the case $p+q=10$ and $m=16$.  
In this case the complexification of the representation $\tau$ is $(\mathbf{Spin}(10)\times \mathbf{GL}(1),\Lambda_\sharp)$ and this is a prehomogeneous vector space without non-trivial relative invariants (see \cite{Sato-Kimura}). Hence the Clifford quartic form vanishes in this case.

\begin{lem}
Let $\rho$ be a non-degenerate representation of  $R_{p,q}$ on a real vector space $W$. 
Assume that $p+q=7$ or $p+q \geq 9$. 
If $\rho$ is generic,  then $(\mathfrak{gl}(1,\R) \oplus \mathfrak{g}_{p,q}(\rho),W)$ is not a prehomogeneous vector space. 
If $\rho$ is exceptional, then $(\mathfrak{gl}(1,\R) \oplus \mathfrak{g}_{p,q}(\rho),W)$ is a prehomogeneous vector space except the case where $p+q=10$ and $\rho$ is mixed over $\C$. Moreover, as the following table shows, the Lie algebra $\gerg_{p,q}(\rho)$ is strictly larger than $\gerg'_{p,q}(\rho)$ in the exceptional cases. 
\[
\begin{array}{|c|c|c|c|c|}
\hline
p+q & \dim W & \mathfrak{g}'_{p,q}(\rho)\otimes\C & \mathfrak{g}_{p,q}(\rho)\otimes\C & \text{\rm pv?}\\
\hline
\hline
7 & 16 & (\mathfrak{so}(7)\oplus\mathfrak{sl}(2),\Lambda\otimes\Lambda_1) & \mathfrak{so}(8)\oplus\mathfrak{sl}(2) & {\rm pv} \\ 
\hline
9 & 16 & (\mathfrak{so}(9),\Lambda) & \mathfrak{so}(16)  & {\rm pv} \\
\hline
10 & 32\ (\rho=\text{pure}/\C) & (\mathfrak{so}(10)\oplus\mathfrak{so}(2),\Lambda_\sharp\otimes\Lambda_1) & \mathfrak{so}(10)\oplus\mathfrak{sl}(2) & {\rm pv}  \\
\hline
10 & 32\ (\rho=\text{mixed}/\C) & (\mathfrak{so}(10),\Lambda_e\oplus\Lambda_o) & \mathfrak{so}(10)\oplus\mathfrak{gl}(1)  & \text{\rm not pv} \\
\hline
11 & 32 & (\mathfrak{so}(11),\Lambda) & \mathfrak{so}(12)  & {\rm pv} \\
\hline
\end{array}
\]
In particular, if $p+q \geq 12$, then  $(\mathfrak{gl}(1) \oplus \mathfrak{g}_{p,q}(\rho),W)$ is not a prehomogeneous vector space. 
\end{lem}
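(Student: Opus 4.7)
The strategy is to complexify and reduce to the Sato--Kimura classification (4.1). Since a real representation of a real Lie algebra is prehomogeneous if and only if its complexification is, I would work throughout with $(\mathfrak{gl}(1,\C)\oplus\gerg_{p,q}(\rho)\otimes\C,\,W\otimes\C)$, treating the generic and exceptional cases of Theorem \ref{thm:3.1} separately.

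For the generic case Theorem \ref{thm:3.1} gives $\gerg_{p,q}(\rho)\otimes\C = \mathfrak{so}(p+q,\C)\oplus\gerh_{p,q}(\rho)\otimes\C$, and Theorem \ref{thm:3.5} shows that $\mathfrak{so}(p+q,\C)$ operates via a (half-)spin representation $\Lambda_\sharp$ while $\gerh_{p,q}(\rho)\otimes\C$ is a classical subalgebra of some $\mathfrak{gl}(k,\C)$ (or, in the mixed cases of Theorem \ref{thm:3.4}, of a product $\mathfrak{gl}(k_1,\C)\oplus\mathfrak{gl}(k_2,\C)$) acting by its standard representation $\Lambda_1$. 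Since enlarging the group can only make orbits bigger, prehomogeneity of our triple implies prehomogeneity of the enlarged triple $(\mathbf{Spin}(p+q,\C)\times\mathbf{GL}(k,\C),\Lambda_\sharp\otimes\Lambda_1)$ or its mixed analogue $\Lambda_e\otimes\Lambda_1+\Lambda_o\otimes\Lambda_1^*$. Comparing with the list (\ref{eqn:spin pv}), every entry with $p+q=7$ or $p+q\geq 9$ forces $(p+q,k)$ into the short list of exceptional triples of Theorem \ref{thm:3.1}, contradicting genericity. Hence no generic triple is prehomogeneous.

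Next I would treat the exceptional rows of the lemma's table one at a time. For each one I would identify $\gerg_{p,q}(\rho)\otimes\C$ by recognizing the representation $\tau$ as the restriction of a familiar one: triality embeds the spin representation of $\mathfrak{so}(7,\C)$ into that of $\mathfrak{so}(8,\C)$, giving the row for $p+q=7$ together with the extra $\mathfrak{sl}(2,\C)$ factor corresponding to the multiplicity space; the spin of $\mathfrak{so}(9,\C)$ coincides with the half-spin of $\mathfrak{so}(16,\C)$, and that of $\mathfrak{so}(11,\C)$ with the half-spin of $\mathfrak{so}(12,\C)$; for $(p+q,m)=(10,32)$ with $\rho$ pure over $\C$, the action of $\mathfrak{so}(10,\C)$ on $2\,\Lambda_\sharp$ extends to $\mathfrak{so}(10,\C)\oplus\mathfrak{sl}(2,\C)$ as on the list (\ref{eqn:spin pv}). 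In each of these four cases the triple appears on the Sato--Kimura list, yielding prehomogeneity. Only the remaining case $(p+q,m)=(10,32)$ with $\rho$ mixed over $\C$ fails: there $\gerg_{p,q}(\rho)\otimes\C = \mathfrak{so}(10,\C)\oplus\mathfrak{gl}(1,\C)$ acts on $\Lambda_e\oplus\Lambda_o$ with the two $\mathfrak{gl}(1,\C)$ factors (overall scalar and relative) scaling the two summands independently, and a direct dimension count combined with the fact that $\Lambda_e\oplus\Lambda_o$ is absent from (\ref{eqn:spin pv}) shows that this triple is not prehomogeneous. Finally, Theorem \ref{thm:3.1} lists no exceptional case with $p+q\geq 12$, so the generic analysis alone gives the last assertion.

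The main obstacle is the enlargement step in the generic analysis. The list (\ref{eqn:spin pv}) contains several ``saturated'' triples with $k\geq \deg\Lambda_\sharp$, and I must rule out that any of them can arise from a generic $\rho$ via restriction of $\mathbf{GL}(k,\C)$ to the proper classical subgroup $\gerh_{p,q}(\rho)\otimes\C$. This reduces to a case-by-case comparison of the multiplicity $k$ forced by $\dim W$ and Lemma \ref{lem:T and T'} against the bound $\deg\Lambda_\sharp = 2^{\lfloor(p+q-1)/2\rfloor}$; for $p+q\geq 9$ the spin dimension grows so rapidly that no proper classical subgroup $\gerh_{p,q}(\rho)\otimes\C\subsetneq \mathfrak{gl}(k,\C)$ can inherit the prehomogeneity of the saturated triple, completing the proof.
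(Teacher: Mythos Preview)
Your overall strategy — complexify and invoke the Sato--Kimura classification — matches the paper's. But there are two genuine gaps.

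First, your treatment of the exceptional rows is incomplete. Recognizing that, say, the spin representation of $\mathfrak{so}(11,\C)$ extends to a half-spin of $\mathfrak{so}(12,\C)$ only shows that $\mathfrak{so}(12,\C)$ acts on $W\otimes\C$ compatibly with $\gerg'_{p,q}(\rho)\otimes\C$; it does \emph{not} show that $\mathfrak{so}(12,\C)$ preserves the Clifford quartic form $\tilde P$, which is what $\gerg_{p,q}(\rho)$ requires. The paper closes this gap by identifying $\tilde P$ explicitly: for $p+q=9$ it is the square of a quadratic form (Theorem \ref{thm:squared P}), so its symmetry algebra is the full orthogonal algebra of that form; for $p+q=7,10,11$ the paper matches $\tilde P$ against the tabulated relative invariants in \cite[\S 7]{Sato-Kimura} and reads off $\gerg_{p,q}(\rho)$ from known coincidences between those invariants (e.g., spaces (22) and (23) in their list share the same quartic, giving $\mathfrak{so}(12)$). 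Your specific claim that ``the spin of $\mathfrak{so}(9,\C)$ coincides with the half-spin of $\mathfrak{so}(16,\C)$'' is also dimensionally wrong (the latter is $128$-dimensional); what actually happens is that the $16$-dimensional spin embeds $\mathfrak{so}(9,\C)$ into $\mathfrak{so}(16,\C)$ acting via its \emph{standard} representation, and $\tilde P$ is the square of the invariant quadratic form.

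Second, in the generic case your enlargement to $\mathbf{GL}(k)$ does not dispose of the entries in (\ref{eqn:spin pv}) with $k=\deg\Lambda_\sharp-1$ or $k\geq\deg\Lambda_\sharp$ (e.g., $k=15$ or $k\geq 16$ for $p+q=9$). For those $k$ the enlarged triple \emph{is} prehomogeneous, so you must show that restricting to the proper classical subgroup $\gerh_{p,q}(\rho)\otimes\C\subsetneq\mathfrak{gl}(k,\C)$ destroys prehomogeneity. A growth heuristic is not enough; the paper instead works directly with the actual groups $\mathfrak{so}(k)$, $\mathfrak{sp}(k)$, $\mathfrak{gl}(k)$ in its five types (I)--(V) and invokes the $2$-simple classifications of \cite{Kimura}, \cite{KimuraI}, \cite{KimuraII} case by case. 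Those references are also what handle the reducible types (III)--(V), where (\ref{eqn:spin pv}) applies only to each summand separately and one must further check (as the paper does, citing \cite[Proposition 1.15]{KimuraII} for type (III)) that no combination of summand parameters yields a prehomogeneous sum.
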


\begin{proof}
By Theorem \ref{thm:3.5}, the complexification of the representation $\tau$ of $\mathfrak{gl}(1) \oplus \gerg'_{p,q}(\rho)$ on $W$ are equivalent to one of the following:
\begin{enumerate}
\def\labelenumi{(\Roman{enumi})}
\item $(\mathfrak{gl}(1) \oplus \mathfrak{so}(p+q)\oplus\mathfrak{so}(k), \Lambda_e\otimes\Lambda_1)$ \quad $(p+q \equiv 1, 3 \pmod 8)$,
\item $(\mathfrak{gl}(1) \oplus \mathfrak{so}(p+q)\oplus\mathfrak{sp}(k), \Lambda_e\otimes\Lambda_1)$ \quad $(p+q \equiv 5, 7 \pmod 8)$,
\item $(\mathfrak{gl}(1) \oplus \mathfrak{so}(p+q)\oplus\mathfrak{gl}(k), \Lambda_e\otimes\Lambda_1 \oplus \Lambda_o\otimes\Lambda_1^*)$ \quad $(p+q \equiv 0, 4 \pmod 8)$,
\item $(\mathfrak{gl}(1) \oplus \mathfrak{so}(p+q)\oplus\mathfrak{so}(k_1)\oplus\mathfrak{so}(k_2), \Lambda_e\otimes\Lambda_1\otimes 1 \oplus \Lambda_o\otimes 1 \otimes\Lambda_1^*)$ \quad $(p+q \equiv 2 \pmod 8)$,
\item $(\mathfrak{gl}(1) \oplus \mathfrak{so}(p+q)\oplus\mathfrak{sp}(k_1)\oplus\mathfrak{sp}(k_2), \Lambda_e\otimes\Lambda_1\otimes 1 \oplus \Lambda_o\otimes 1 \otimes\Lambda_1^*)$ \quad $(p+q \equiv 6 \pmod 8)$.
\end{enumerate} 
Assume that $(\mathfrak{gl}(1) \oplus \mathfrak{g}'_{p,q}(\rho),\tau,W)$ is a prehomogeneous vector space. 
Then,  every direct summand is also a prehomogeneous vector spaces and it must coincide with one of the representations in $(\ref{eqn:spin pv})$. 
Hence, in the case (I), by the congruence condition on $p+q\equiv 1,3\pmod 8$, we have $p+q=9,11$ and $k=1,2$. By the classification in \cite{Kimura} (see also \cite[Proposition 1.3]{KimuraI}), if $k=2$, then the representation does not give a prehomogeneous vector space. Thus we obtain a prehomogeneous vector space only for $k=1$ (the unique exceptional case for $p+q=9,11$), namely, for $(p,q,m)=(9,0,16)$, $(8,1,16)$, $(5,4,16)$, $(10,1,32)$, $(9,2,32)$, $(6,5,32)$. 
In the case $(p,q,m)=(9,0,16)$, $(8,1,16)$, $(5,4,16)$, the Clifford quartic form is the square of a quadratic form and we have $\gerg_{9,0}(\rho)=\mathfrak{so}(16)$, $\gerg_{8,1}(\rho)=\mathfrak{so}(8,8)$, $\gerg_{5,4}(\rho)=\mathfrak{so}(8,8)$, which is larger than $\gerg'_{p,q}(\rho)$  (cf.\ Theorem \ref{thm:squared P}, \cite[\S 7, I), (19)]{Sato-Kimura}). 
In the case $(p,q,m)=(10,1,32)$, $(9,2,32)$, $(6,5,32)$, the Clifford quartic form is the irreducible relative invariant of the space (22) in \cite[\S 7, I)]{Sato-Kimura}, which is the same as the irreducible relative invariant of the space (23) in \cite[\S 7, I)]{Sato-Kimura}. 
This shows that $\gerg_{10,1}(\rho)=\gerg_{9,2}(\rho)=\mathfrak{so}(10,2)$ and $\gerg_{6,5}(\rho)=\mathfrak{so}(6,6)$, which is larger than $\gerg'_{p,q}(\rho)$. 

In the case (II),  we obtain a prehomogeneous vector space only for $p+q=7$, $k=1$ and $m=16$ (the unique exceptional case for $p+q=7$), namely, for $(p,q,m)=(7,0,16)$, $(6,1,16)$, $(5,2,16)$, $(4,3,16)$.  
In these cases, the Clifford quartic form is the irreducible relative invariant of the space (17) in \cite[\S 7, I)]{Sato-Kimura}, which is the same as the irreducible relative invariant of the space (15) in \cite[\S 7, I)]{Sato-Kimura}. 
This shows that  
 $\gerg_{7,0}(\rho) = \mathfrak{so}(8)\oplus\mathfrak{sl}(2,\R)$,  
 $\gerg_{4,3}(\rho) = \mathfrak{so}(4,4)\oplus\mathfrak{sl}(2,\R)$,  
$\gerg_{6,1}(\rho) \cong \gerg_{5,2}(\rho) \cong \mathfrak{so}^*(8)\oplus\mathfrak{su}(2)$, which is larger than $\gerg'_{p,q}(\rho)$.  

In the case (III),  if $\deg \Lambda_e \leq k$, by \cite[Proposition 1.15]{KimuraII}, the representation does not give a prehomogeneous vector space. 
If $\deg \Lambda_e > k$, by the classification of $2$-simple prehomogenous vector spaces in \cite{KimuraI}, we can easily check that the representation does not give a prehomogeneous vector space. 

In the case (IV),  if the representation gives a prehomogeneous vector space, then $n=10$ and $k_1,k_2=0,1,2$. If $k_1\cdot k_2 \ne 0$, from the classification in \cite{Kimura} and \cite{KimuraI}, we see that the representation does not give a prehomogeneous vector space. If $(k_1,k_2)=(1,0)$ or $(0,1)$, this is a degenerate case, which we have already examined just before this lemma. 
If $(k_1,k_2)=(2,0)$ or $(0,2)$, then the representation gives a prehomogeneous vector space of type (17) in the list in \cite[\S 3]{Kimura}. Hence the Clifford quartic form is the same as the irreducible relative invariant of the space (20)  in \cite[\S 7, I)]{Sato-Kimura}. 
Hence we have $\gerg_{9,1}(\rho)=\mathfrak{so}(9,1)\oplus\mathfrak{sl}(2,\R)$, $\gerg_{5,5}(\rho)=\mathfrak{so}(5,5)\oplus\mathfrak{sl}(2,\R)$,  $\gerg_{7,3}(\rho)=\mathfrak{so}(7,3)\oplus\mathfrak{su}(2)$,  which is larger than 
$\gerg'_{p,q}(\rho)$. 

In the case (V), by the congruence condition $p+q\equiv 6 \pmod 8$, we have $p+q=14$.  
However the irreducible representation contained in (V) does not appear in $(\ref{eqn:spin pv})$. 
Hence, we obtain no prehomogeneous vector spaces in this case. 

Thus all the cases for which $(\mathfrak{gl}(1) \oplus \mathfrak{g}'_{p,q}(\rho),W)$ is a prehomogeneous vector space are the 4 cases given in the lemma and, by Theorem \ref{thm:3.1},  all of them are of exceptional type.  
Therefore, if $\rho$ is of generic type (and $p+q=7$ or $p+q\geq9$), then  $(\mathfrak{gl}(1) \oplus \mathfrak{g}_{p,q}(\rho),W)$ is not a prehomogeneous vector space. 
In these 4 exceptional cases the Lie algebras $\gerg_{p,q}(\rho)$ are well known in the theory of  prehomogeneous spaces (see \cite{Sato-Kimura} and \cite{Kimura}). 
There remains one more representation of exceptional type, namely, the case where $p+q=10$, $\dim W=16$ and $\rho$ is mixed over $\C$. 
In this case, we have $\gerg_{p,q}(\rho)=\mathfrak{so}(10)\oplus \mathfrak{gl}(1)$ by direct calculation and hence $(\mathfrak{gl}(1) \oplus \mathfrak{g}_{p,q}(\rho),W)$ is not a prehomogeneous vector space.   
\end{proof}

Summing up the results above, we see that (4.a) and (4.b) are established. 
As for the classification of prehomogeneous cases (4.c), we obtain the following theorem. 

\begin{thm}
\label{thm:pv} 
$(1)$ A Clifford quartic form is not a relative invariant of any prehomogeneous vector space if and only if 
\[
\left\{
\begin{array}{l}
p+q=5,\ m>8;\\
p+q=6,\ m>16\ \text{and $\rho$ is mixed over $\C$}; \\
p+q=7,8,9,\ m>16;\\
p+q=10,\ m>32,\ \text{or $m=32$ and $\rho$ is mixed over $\C$}; \\
p+q=11,\ m>32; \\
p+q\geq12.
\end{array}
\right.
\]

$(2)$ 
The prehomogeneous vector spaces having Clifford quartic forms as relative invariant are the spaces listed in Table 1 in p.\pageref{table:1}.  
\end{thm}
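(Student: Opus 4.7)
The plan is to reduce the problem to the following principle: $\tilde P$ is a relative invariant of some prehomogeneous vector space if and only if $(\mathfrak{gl}(1)\oplus\gerg_{p,q}(\rho),W)$ is itself prehomogeneous. This is because any group $\mathbf G$ having $\tilde P$ as a relative invariant must act on $W$ by scalar multiples of elements of $G_{p,q}(\rho)$, so its Lie algebra is contained in $\mathfrak{gl}(1)\oplus\gerg_{p,q}(\rho)$; conversely, if the latter is prehomogeneous then $\tilde P$, being a semi-invariant of positive degree in the open orbit, is automatically a relative invariant. Once this reduction is made, Theorem \ref{thm:3.1} allows us to replace $\gerg_{p,q}(\rho)$ by $\mathfrak{so}(p,q)\oplus\gerh_{p,q}(\rho)$ for all generic $\rho$, and Theorem \ref{thm:3.5} gives an explicit tensor-product description of the action on $W$. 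The question then becomes a direct lookup against the known classifications of irreducible and $2$-simple prehomogeneous vector spaces due to Sato--Kimura \cite{Sato-Kimura} and Kimura \cite{Kimura}, \cite{KimuraI}, \cite{KimuraII}.

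The case analysis would be organized by the value of $p+q$. For $p+q\leq 4$ the relevant (half-)spin representations coincide with standard representations of classical Lie algebras, so the triples in Theorem \ref{thm:3.5} are well-known prehomogeneous tensor-product spaces and Table 1 is filled directly. For $5\leq p+q\leq 8$, each triple becomes, after the accidental isomorphism of $\mathfrak{so}(p,q)$ with a low-rank classical algebra, a standard matrix representation $(\mathfrak{gl}(1)\oplus\mathfrak{sl}_r\oplus\mathfrak{cl}_k,\Lambda_1\otimes\Lambda_1)$ or a direct sum of two such, and I would compare against the classifications case by case, taking care to distinguish the pure and mixed-over-$\C$ situations for $p+q=6$ where $R_{p,q}^+$ decomposes. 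For $p+q=7$ or $p+q\geq 9$ the essential input is the fact that the only irreducible prehomogeneous vector spaces containing a (half-)spin representation of $\mathbf{Spin}(p+q)$ are those in the finite list $(\ref{eqn:spin pv})$, whence prehomogeneity can occur only for small values of the multiplicity $k$ or $(k_1,k_2)$.

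For the exceptional representations in the sense of (\ref{prob:exceptional}), where $\gerg_{p,q}(\rho)$ is strictly larger than $\gerg'_{p,q}(\rho)$, the approach is to compute $\gerg_{p,q}(\rho)$ directly using the fact that the associated Clifford quartic form then coincides with an already-classified relative invariant of a known prehomogeneous vector space in Sato--Kimura's list; Theorems \ref{thm:squared P} and \ref{thm:degenerate} supply most of the required identifications (the quartic is a square of a quadratic form, or comes from the $\mathrm{Spin}(10,2)$- or $\mathrm{Spin}(5,5)$-type spaces, etc.). This part of the analysis is essentially the case discussion already carried out in (\ref{prob:degenerate})--(\ref{prob:classification}); Theorem \ref{thm:pv} is the clean restatement of its outcome.

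The main obstacle is not conceptual but organizational: one must verify, for every $(p,q,m)$ not excluded by the nonprehomogeneity list in (1), that the triple $(\mathfrak{gl}(1)\oplus\gerg_{p,q}(\rho),W)$ actually matches an entry in the Sato--Kimura/Kimura classification, identify the correct real form, and thus assemble Table 1. The nonexistence statement for $p+q\geq 12$ follows from the observation that $\dim W$ is a multiple of $2^{\lfloor (p+q-1)/2\rfloor}$, which exceeds the largest multiplicity $k$ for which a spin representation of $\mathbf{Spin}(p+q)$ can appear in any prehomogeneous space in $(\ref{eqn:spin pv})$, ruling out every candidate decomposition; the congruence $p+q\equiv 6\pmod 8$ case (i.e.\ $p+q=14$) is excluded separately because its irreducible representation does not appear in the list.
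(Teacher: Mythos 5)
Your proposal is correct and follows essentially the same route as the paper: reduce to prehomogeneity of $(\mathfrak{gl}(1)\oplus\gerg_{p,q}(\rho),W)$, use Theorems \ref{thm:3.1} and \ref{thm:3.5} to make the action explicit, and compare case by case (organized by $p+q$, with the spin-representation list $(\ref{eqn:spin pv})$ handling $p+q=7$ and $p+q\geq 9$, and the exceptional/degenerate representations identified with known relative invariants) against the Sato--Kimura and Kimura classifications. This is precisely the content of \S\ref{section:classification}.
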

\begin{table}
\begin{center}
{\small
$
\def\arraystretch{1.3}
\begin{array}{|c|c|}
\hline
(p,q) &  \text{prehomogeneous vector space}\\
\hline
\hline
(1,0) & (GL(1,\R)\times SO(k_1,k_2), \Lambda_1) \\
\hline
(2,0) & (GL(1,\C)\times SO(k,\C), \Lambda_1) \\
\hline
(1,1) & (GL(1,\R)\times SO(k_1,k_2), \Lambda_1)\oplus (GL(1,\R)\times SO(k_3,k_4), \Lambda_1) \\
\hline
(3,0) & (GL(1,\H) \times SO^*(2k), \Lambda_1\otimes\Lambda_1) \\
\hline
(2,1) & (GL(2,\R)\times SO(k_1,k_2), \Lambda_1\otimes\Lambda_1)\\
\hline
(4,0) & (GL(1,\mathbb H)\times GL(1,\mathbb H)\times GL(k,\mathbb H), (\Lambda_1\otimes1\otimes\Lambda_1)\oplus(1\otimes\Lambda_1\otimes\Lambda_1^*) )\\
\hline
(3,1) & (GL_2(\C)\times SU(k_1,k_2), \Lambda_1\otimes\Lambda_1)\\
\hline
(2,2) & (GL_2(\R)\times GL_2(\R)\times SL(k,\R), (\Lambda_1\otimes1)\oplus(1\otimes\Lambda_1\otimes\Lambda_1))\\
\hline
(5,0) & (GL(1,\R)\times SO(8), \Lambda_1)\\
\hline
(4,1) & (GL(1,\R)\times SO(4,4), \Lambda_1)\\
\hline
(3,2) & (GL(1,\R)\times SO(4,4), \Lambda_1)\\
\hline
(6,0) & (GL(2,\C)\times SU(4), \Lambda_1 \otimes \Lambda_1)\\
\hline
\smash{\lower3mm\hbox{$(5,1)$}} & (GL_2(\mathbb H)\times Sp(k_1,k_2),\Lambda_1 \otimes \Lambda_1)\ (k_1+k_2 \geq 2) \\
\cline{2-2}
& (GL_1(\R)\times SL(2,\H)\times SU(2)\times SU(2),(\Lambda_1\otimes\Lambda_1\otimes1)\oplus(\Lambda_1^*\otimes1\otimes\Lambda_1))  \\
\hline
(4,2) & (GL(2,\C)\times SU(2,2), \Lambda_1 \otimes \Lambda_1)\\
\hline
\smash{\lower3mm\hbox{$(3,3)$}} &  (GL_4(\R)\times Sp(k,\R),\Lambda_1\otimes\Lambda_1)  \quad (k \geq 2) \\
\cline{2-2}
& (GL_1(\R)\times SL(4,\R)\times SL(2,\R)\times SL(2,\R),(\Lambda_1\otimes\Lambda_1\otimes1)\oplus(\Lambda_1^*\otimes1\otimes\Lambda_1))  \\
\hline
(7,0) & (GL_2(\R)\times SO(8),\Lambda_1\otimes\Lambda_1) \\
\hline
(6,1) & (GL_1(\H) \times SO^*(8),\Lambda_1\otimes\Lambda_1) \\
\hline
(5,2) & (GL_1(\H) \times SO^*(8),\Lambda_1\otimes\Lambda_1) \\
\hline
(4,3) & (GL_2(\R)\times SO(4,4),\Lambda_1\otimes\Lambda_1) \\
\hline
(8,0) & (GL_1(\R)\times GL_1(\R)\times SO(8)\times SO(8),(\Lambda_1\otimes1)\oplus(1\otimes\Lambda_1)) \\
\hline
(7,1) & (GL_1(\C)\times SO(8,\C), \Lambda_1) \\
\hline
(5,3) & (GL_1(\C)\times SO(8,\C), \Lambda_1) \\
\hline
(4,4) & (GL_1(\R)\times GL_1(\R)\times SO(4,4)\times SO(4,4),(\Lambda_1\otimes1)\oplus(1\otimes\Lambda_1)) \\
\hline
(9,0) & (GL_1(\R)\times SO(16),\Lambda_1) \\
\hline
(8,1) & (GL_1(\R)\times SO(8,8),\Lambda_1) \\
\hline
(5,4) & (GL_1(\R)\times SO(8,8),\Lambda_1) \\
\hline
(9,1) & (GL_2(\R)\times Spin(9,1),\Lambda_1\otimes\Lambda_\sharp) 
    \quad (\sharp=e,o) \\
\hline
(7,3) & (GL_1(\H) \times Spin(7,3),\Lambda_1\otimes\Lambda_\sharp)     \quad (\sharp=e,o) \\
\hline
(5,5) & (GL_2(\R)\times Spin(5,5),\Lambda_1\otimes\Lambda_\sharp)    \quad (\sharp=e,o) \\
\hline
(10,1) & (GL_1(\R)\times Spin(10,2),\Lambda_\sharp)    \quad (\sharp=e,o) \\
\hline
(9,2) & (GL_1(\R)\times Spin(10,2),\Lambda_\sharp)    \quad (\sharp=e,o) \\
\hline
(6,5) & (GL_1(\R)\times Spin(6,6),\Lambda_\sharp)    \quad (\sharp=e,o) \\
\hline
\end{array}
$
} % End of \small  
\end{center}
\caption{Prehomogeneous vector spaces having Clifford quartic forms as relative invariant}
\label{table:1}
\end{table}

\begin{remark}
Among prehomogeneous vector spaces in Table 1, the cases (9,1), (7,3), (5,5), (10,1), (9,2), (6,5) are interesting, since these spaces have rather complex structures. Our theorem gives a construction of the irreducible relative invariants and explicit formulas for the local functional equations in a unified manner. 
For the split case (5,5) (resp.\ (6,5)) the explicit formula for the functional equations was calculated earlier by Igusa \cite{Igusa} (resp.\ Suzuki \cite{Suzuki1}). 
The other cases seem to be new.    
\end{remark}

\subsection{Non-prehomogeneous example: $(p,q)=(3,2)$}
As is shown in the theorem above, non-prehomogeneous Clifford quartic forms appear first for $p+q=5$. We consider here this non-prehomogeneous case for $(p,q)=(3,2)$ in some detail. 
The algebra $R_{3,2}$ has a unique irreducible representation $\rho_0$ of degree $8$. 
We may choose the basis matrices $S_i=\rho_0(e_i)$ $(1\leq i \leq 5)$ as follows:
\begin{gather*}
S_1=
\begin{pmatrix}
0 & 0 & 0 & 1_2 \\ 
0 & 0 & -1_2 & 0 \\ 
0 & -1_2 & 0 & 0 \\ 
1_2 & 0 & 0 & 0 
\end{pmatrix}, \quad
S_2=
\begin{pmatrix}
0 & 0 & J & 0 \\ 
0 & 0 & 0 & -J \\ 
-J & 0 & 0 & 0 \\ 
0 & J  & 0 & 0 
\end{pmatrix}, \quad
S_3=
\begin{pmatrix}
0 & 0 & 0 & J \\ 
0 & 0 & J & 0 \\ 
0 & -J & 0 & 0 \\ 
-J & 0  & 0 & 0 
\end{pmatrix}, \\
S_4=
\begin{pmatrix}
0 & 0 & 0 & H \\ 
0 & 0 & -H & 0 \\ 
0 & -H & 0 & 0 \\ 
H & 0  & 0 & 0 
\end{pmatrix},  \quad
S_5=
\begin{pmatrix}
0 & 0 & 0 & K \\ 
0 & 0 & -K & 0 \\ 
0 & -K & 0 & 0 \\ 
K & 0  & 0 & 0 
\end{pmatrix},
\end{gather*}
where we put $J=\begin{pmatrix} 0 & -1 \\ 1 & 0 \end{pmatrix}$, $H=\begin{pmatrix} -1 & 0 \\ 0 & 1 \end{pmatrix}$, $K=\begin{pmatrix} 0 & 1 \\ 1 & 0 \end{pmatrix}$.
Then we have 
\[
\rho_0(\gerk_{3,2})=\set{\begin{pmatrix} X & 0 \\ 0 & X \end{pmatrix}}{X \in \mathfrak{sp}(2;\R)}, \quad  \mathfrak{sp}(2;\R)=\set{X \in M(4;\R)}{{}^tXJ_2+J_2X=0},
\]
where $\gerk_{3,2}$ is the Lie algebra spanned by $e_ie_j$ $(1 \leq i < j \leq 5)$ and $J_2=J \perp J$. 
We identify the representation space $W_0$ of $\rho_0$ with $M(4,2;\R)$ by 
$w=\begin{pmatrix} u \\ v \end{pmatrix} \mapsto (u v)$. Then the action of $\gerk_{3,2}$ is given by the left multiplication of $\mathfrak{sp}(2;\R)$.  
Consider a reducible representation $\rho=\rho_0^{\oplus k}$. 
If $k \geq 2$, then $\rho$ is non-degenerate, generic and non-prehomegeneous, and the Clifford quartic form 
\[
\tilde P(w) = \sum_{i=1}^3 S_i^{(k)}[w]^2 - \sum_{i=4}^5 S_i^{(k)}[w]^2, 
\quad S_i^{(k)}=\overbrace{S_i\perp \cdots\perp S_i}^k
\]
satisfies the functional equation (Theorem \ref{thm:2.14})
\begin{eqnarray*}
\begin{pmatrix}
\tilde\zeta_+\left(s,\hat\Psi\right)  \\
\tilde\zeta_-\left(s,\hat\Psi\right)  
\end{pmatrix}
 &=& 2^{4s+4k-1}\pi^{-4s -4k-2} \Gamma (s+1) 
          \Gamma \left(s+\frac 52 \right) \Gamma\left (s+2k-\frac 32 \right) 
          \Gamma \left(s+2k\right) \\
  & & {} \times  \begin{pmatrix}
                     -\sin (2\pi s) & -4\sin(\pi s)  \\
                       0 & \sin(2\pi s)
                   \end{pmatrix} 
                  \begin{pmatrix}
                  \tilde\zeta_+\left(-2k - s,\Psi\right)  \\
                  \tilde\zeta_-\left(-2k - s,\Psi\right)  
                   \end{pmatrix}.
\end{eqnarray*}
In the present case, the Clifford quartic form has an expression in terms of more popular invariants. 
By Theorems \ref{thm:3.4} and \ref{thm:3.5}, $\tilde P$ is invariant under the action of the group $G_{3,2}(\rho)$ with Lie algebra
\[
\gerg_{3,2}(\rho)= \mathfrak{sp}(2;\R)\oplus \mathfrak{sp}(k;\R), 
\quad \mathfrak{sp}(k;\R)=\set{X \in M(2k;\R)}{{}^tXJ_k+J_kX=0},
\]
where $J_k=\overbrace{J \perp \cdots \perp J}^k$.
We identify the representation space of $\rho$ with $M(4,2k;\R)$.  
Then the action on $M(4,2k;\R)$ is given by $w \mapsto Xw+w\,{}^tY$ $(x \in \mathfrak{sp}(2;\R),\ Y \in \mathfrak{sp}(k;\R))$ (see \S 6.2). 

\begin{lem}
\label{lem:sp invariant}
We define the polynomials $P_1,P_2$ on $M(4,2k)$ by 
\[
P_1(w)= \text{the Pfaffian of $w J_k \,{}^tw$}, \quad
P_2(w)= \mathrm{tr}(J_2w J_k \,{}^tw).
\]
Then the ring $\C[M(4,2k)]^{Sp(2)\times Sp(k)}$ of $Sp(J_2)\times Sp(k)$-invariants is generated by $P_1,P_2$ and is isomorphic to the polynomial ring of $2$ variables.
\end{lem}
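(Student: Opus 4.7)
The plan is to compute the invariant ring in two stages---first take $Sp(k)$-invariants and then $Sp(2)$-invariants. Since $Sp(2,\R)$ and $Sp(k,\R)$ are Zariski-dense in their complexifications, it is equivalent to identify the $Sp(4,\C)\times Sp(2k,\C)$-invariants on $M(4,2k;\C)$. Invariance of $P_1$ and $P_2$ is immediate: under $w\mapsto gwh^T$ with $g\in Sp(2,\C)$ and $h\in Sp(k,\C)$, the identity $h^TJ_kh=J_k$ yields $wJ_k\,{}^tw\mapsto g(wJ_k\,{}^tw)\,{}^tg$, and since $\det g=1$ and $\,{}^tgJ_2g=J_2$ both $P_1=\mathrm{Pf}(wJ_k\,{}^tw)$ and $P_2=\mathrm{tr}(J_2\,wJ_k\,{}^tw)$ are preserved; the remaining task is to show that they freely generate the invariant ring.

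For stage one I view $w\in M(4,2k;\C)$ as a quadruple of row vectors $w^{(1)},\dots,w^{(4)}\in\C^{2k}$. By the first fundamental theorem of invariant theory for the symplectic group, $\C[(\C^{2k})^{\oplus 4}]^{Sp(2k,\C)}$ is generated by the symplectic pairings $w^{(i)}J_k\,{}^tw^{(j)}$, which are exactly the entries of the $4\times 4$ skew-symmetric matrix $B:=wJ_k\,{}^tw$. Because $4\le 2k$ (using $k\ge 2$), the second fundamental theorem ensures there are no nontrivial relations, so the map $\C[\mathrm{Skew}(4;\C)]\to\C[M(4,2k;\C)]^{Sp(2k,\C)}$ sending the generic entries of a skew matrix to the entries of $B$ is a $\C$-algebra isomorphism.

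For stage two, $g\in Sp(4,\C)$ acts on $B$ by $B\mapsto gB\,{}^tg$, so the problem reduces to computing $\C[\mathrm{Skew}(4;\C)]^{Sp(4,\C)}$. The key is the $Sp(4,\C)$-module decomposition $\Lambda^2\C^4=\C\cdot J_2\oplus V$, where $V$ is the $5$-dimensional irreducible representation; under the exceptional isomorphism $\mathfrak{sp}(4,\C)\cong\mathfrak{so}(5,\C)$ the summand $V$ is the standard representation of $SO(5,\C)$, whose polynomial invariants form the polynomial ring on its defining quadratic form. Hence $\C[\mathrm{Skew}(4;\C)]^{Sp(4,\C)}$ is generated by one linear invariant from the trivial summand, which I take to be $\mathrm{tr}(J_2B)=P_2$, and one quadratic invariant from $V$, for which I take the Pfaffian $P_1=\mathrm{Pf}(B)$ (invariant since $\mathrm{Pf}(gB\,{}^tg)=\det(g)\mathrm{Pf}(B)=\mathrm{Pf}(B)$). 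Writing $B=\alpha J_2+v$ with $v\in V$ gives $P_2=-4\alpha$ and $P_1=\alpha^2+q(v)$ for a nondegenerate quadratic form $q$ on $V$; since $q(v)$ is independent of $\alpha$, this confirms that $P_1,P_2$ are algebraically independent and that $\{P_1,P_2\}$ freely generates the invariant ring. The two classical inputs---the FFT/SFT for $Sp$ in stage one and the $Sp(4,\C)$-decomposition of $\Lambda^2\C^4$ in stage two---are standard, so no serious obstacle arises, the only care needed being the verification that the Pfaffian projects nontrivially to the quadratic invariant on $V$.
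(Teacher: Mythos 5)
Your proof is correct, and its first stage coincides exactly with the paper's: both pass through the $Sp(2)$-equivariant map $w\mapsto B=wJ_k\,{}^tw$ and use the first and second fundamental theorems for the symplectic group (valid since $4\le 2k$) to identify $\C[M(4,2k)]^{Sp(k)}$ with $\C[\mathrm{Alt}(4)]$. Where you genuinely diverge is the second stage, the computation of $\C[\mathrm{Alt}(4)]^{Sp(2)}$. The paper quotes the theory of prehomogeneous vector spaces: $(GL(1)\times GL(1)\times Sp(J_2),\mathrm{Alt}(4))$ is a regular prehomogeneous vector space (from Kimura's classification) whose fundamental relative invariants are $\mathrm{Pf}(Y)$ and $\mathrm{tr}(J_2Y)$, and fundamental relative invariants are automatically algebraically independent. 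You instead argue by classical invariant theory: decompose $\Lambda^2\C^4=\C J_2\oplus V$, use the exceptional isogeny $Sp(4,\C)\to SO(5,\C)$ to get $\C[V]^{Sp(4)}=\C[q]$ with $q=\mathrm{Pf}|_V$, and then make the explicit change of generators $\C[\alpha,q]=\C[P_2,P_1]$. Your route is more self-contained and elementary (no appeal to the PV classification), at the cost of one computation you correctly flag: that the polarization of the Pfaffian pairs $\C J_2$ and $V$ trivially, so that $\mathrm{Pf}(\alpha J_2+v)=\alpha^2+q(v)$ with no cross term; this does hold, either by Schur's lemma applied to the invariant bilinear form on $\C J_2\oplus V$ or by direct expansion using $v_{12}+v_{34}=0$ on $V$. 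The paper's route is shorter given that PV machinery is already in use throughout, and it yields algebraic independence for free from the general lemma on fundamental relative invariants.
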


\begin{proof}
The $Sp(2)$-equivariant mapping $\phi:M(4,2k) \rightarrow \mathrm{Alt}(4)$ $(\phi(w)=w J_k \,{}^tw)$ induces an isomorphism $M(4,2k)//Sp(k) \cong \mathrm{Alt}(4)$, namely, $\phi^*:\C[\mathrm{Alt}(4)] \rightarrow \C[M(4,2k)]^{Sp(k)}$ is a $\C$-algebra isomorphism. Hence,  $\phi^*:\C[\mathrm{Alt}(4)]^{Sp(2)} \stackrel{\cong}{\longrightarrow} \C[M(4,2k)]^{Sp(2)\times Sp(k)}$.   
Put $V_1=\C J_2$ and $V_2=\set{Y \in \mathrm{Alt}(4)}{\mathrm{tr}(J_2Y)=0}$. 
Then $V_1$ and $V_2$ are simple $Sp(J_2)$-modules and $\mathrm{Alt}(4)=V_1\oplus V_2$. It is known that $(GL(1)\times GL(1) \times Sp(J_2),\mathrm{Alt}(4))$ is a regular prehomogeneous vector space and the fundamental relative invariants are given by the Pfaffian $Pf(Y)$ and $\mathrm{tr}(J_2Y)$ (see \cite{Kimura}).  Here the first (resp.\ second) factor of $GL(1)\times GL(1)$ acts of $V_1$ (resp.\ $V_2$) as scalar multiplication. 
This shows that $\C[M(4,2k)]^{Sp(2)}=\C[Pf(Y),\mathrm{tr}(J_2Y)]$ and hence 
$\C[M(4,2k)]^{Sp(2)\times Sp(k)}=\C[P_1,P_2]$, since $P_1=\phi^*(Pf(Y))$ and $P_2=\phi^*(\mathrm{tr}(J_2Y))$. 
The fundamental relative invariants are algebraically independent (\cite[\S 4, Lemma 4]{Sato-Kimura}, \cite[Lemma 2.8]{KimuraBook}) and this implies that $\C[M(4,2k)]^{Sp(J_2)\times Sp(J_k)}$ is isomorphic to the polynomial ring of $2$ variables. 
\end{proof}

The polynomial $P_1$ is of degree $4$ and the polynomial $P_2$ is a quadratic form of signature $(4k,4k)$. 
By Lemma \ref{lem:sp invariant}, the Clifford quartic form is written as a linear combination of  $P_1$ and $P_2^2$. Indeed we have 
\[
\tilde P(w) = -16 P_1(w)+P_2(w)^2.
\]
Note that $P_1(w)$ is the irreducible relative invariant of the prehomogeneous vector space $(GL(4)\times Sp(k), \Lambda_1\otimes\Lambda_1, M(4,2k))$ and is viewed as the Clifford quartic form for $(p,q)=(3,3)$.  
Hence $P_1$ also satisfies a local functional equation. 
The polynomial $P_2^2$ also satisfy a local functional equation, since $P_2$ is a quadratic form. 
We define the local zeta functions $\tilde\zeta_{1,\pm}(f;s,\Psi)$ (resp.\ $\tilde\zeta_{2,+}(f;s,\Psi)$) for $P_1$ (resp.\  $P_2^2$) as in \S 2. 
Then we obtain the following functional equations from Theorem \ref{thm:2.14} for $P_1$ and Theorem \ref{lem:LFE for quad form} for $P_2^2$:
\begin{eqnarray*}
\begin{pmatrix}
\tilde\zeta_{1,+}\left(s,\hat\Psi\right)  \\
\tilde\zeta_{1,-}\left(s,\hat\Psi\right)  
\end{pmatrix}
 &=& 2^{4s+4k}\pi^{-4s -4k-2} \Gamma (s+1) 
          \Gamma \left(s+3 \right) \Gamma\left (s+2k-2 \right) 
          \Gamma \left(s+2k\right) \\
 & & {} \times     \begin{pmatrix}
                     \sin( \pi s)^2 & 0  \\
                       0 & \sin( \pi s)^2
                   \end{pmatrix} 
                  \begin{pmatrix}
                  \tilde\zeta_{1,+}\left(-2k - s,\Psi\right)  \\
                  \tilde\zeta_{1,-}\left(-2k - s,\Psi\right)  
                   \end{pmatrix}, \\
\tilde\zeta_{2,+}\left(s,\hat\Psi\right) 
 &=&  - 2^{4s+4k-1}\pi^{-4s -4k-2}   \Gamma \left(s+\frac12\right) 
          \Gamma \left(s+1 \right) \Gamma\left (s+2k \right) 
          \Gamma \left(s+2k+\frac12\right) \\
 & & \times 
    \sin (2\pi s)     \tilde\zeta_{2,+}\left(-2k - s,\Psi\right). 
\end{eqnarray*}

\subsection{The Cilifford quartic forms are homaloidal}

A homogeneous rational function $f$ on a fininte-dimensional vector space $\mathbf V$ is called {\it homaloidal}, if
\begin{quote}
$v \mapsto \mathrm{grad}\,f(v)$ defines a birational mapping of $\mathbb P(\mathbf V) \longrightarrow \mathbb P(\mathbf V^*)$.
\end{quote}
For a homaloidal homogeneous rational function $f$, there exists a rational function $f^*$ satisfying the identity $f^*(\mathrm{grad}\,\log f(v))=1/f(v)$, which is called the {\it multiplicative Legendre transform}\/ of $f$. 
Following \cite{Dolgachev}, we call a  polynomial $f$ a {\it homaloidal EKP-polynomial}\/ if $f$ is homaloidal and its multiplicative Legendre transform $f^*$ is also a polynomial.  

By definition, a regular prehomogeneous vector space has homaloidal relative invariant polynomials. 
As we mentioned in the introduction, it is rather difficult to construct homaloidal polynomials that are not relative invariants of prehomogeneous vector spaces and the classification of homaloidal polynomials has been done only for some special cases:
\begin{itemize}
\item
Cubic homaloidal EKP-polynomials are classified by Etingof-Kazhdan-Polishchuk  (\cite{EKP}). 
\item 
Homaloidal polynomials in 3 variables without multiple factors are classified by Dolgachev (\cite{Dolgachev}).
\item 
In \cite{Bruno}. Bruno determined when a product of linear forms is homaloidal. 
\end{itemize}
All the homaloidal polynomials classified in these works are relative invariants of prehomogeneous vector spaces and 
 Etingof, Kazhdan and Polishchuk  (\cite[\S 3.4, Question 1]{EKP}) asked whether homaloidal EKP-polynomials are relative invariants of regular prehomogeneous vector spaces. 

Now we show that Clifford quartic forms are homaloidal EKP-polynomials and give a negative answer to the question raised by Etingof, Kazhdan and Polishchuk.

\begin{thm}
\label{thm:homaloidal}
Let $S_1,\ldots,S_{p+q}$ be the basis matrices of a representation $R_{p,q}$. 
Then the Clifford quartic form $\tilde P(w)=S_1[w]^2+\cdots+S_{p+q}[w]^2$ is a homaloidal EKP-polynomials. The multiplicative Legendre transform of $\tilde P$ coincides with $\tilde P$ itself up to a constant factor. 
\end{thm}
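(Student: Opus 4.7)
The plan is to compute the gradient of $\tilde P$ explicitly via the Clifford algebra structure, verify that the gradient map (up to scaling) is a projective involution, and read off the multiplicative Legendre transform from the same computation. Writing $v = Q(w)$, $S(v) = \sum_i v_i S_i$ and $\tilde S(v) = \sum_i \varepsilon_i v_i S_i$, the self-duality derivation in \S 2.1 gives the Clifford identity $S(v) \tilde S(v) = P(v) 1_m$, so $\tilde S(v) = P(v) S(v)^{-1}$ on $\Omega$. Differentiating $\tilde P(w) = \sum_i \varepsilon_i S_i[w]^2$ then yields
\begin{equation*}
\mathrm{grad}\,\tilde P(w) = 4 \tilde S(Q(w))\, w = 4\, \tilde P(w)\, S(Q(w))^{-1} w.
\end{equation*}

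The main technical step will be the algebraic identity
\begin{equation*}
\tilde S(v) \, S_i \, \tilde S(v) = \varepsilon_i \bigl( 2 v_i \tilde S(v) - P(v) S_i \bigr) \qquad (1 \le i \le p+q).
\end{equation*}
Expanding the left-hand side as $\sum_{j,k} \varepsilon_j \varepsilon_k v_j v_k S_j S_i S_k$ and using the relations $(\ref{eqn:clifford cond1})$, $(\ref{eqn:clifford cond2})$, the diagonal terms $j=k$ contribute $(2v_i^2 - \varepsilon_i P(v)) S_i$, the terms with $j=i$ or $k=i$ contribute $2\varepsilon_i v_i \tilde S(v) - 2v_i^2 S_i$, and the remaining terms with $j,k \ne i$, $j \ne k$ cancel in pairs because the signs $\sigma(i,j) \in \{\pm 1\}$ in $S_iS_j = \sigma(i,j) S_j S_i$ satisfy $\sigma(i,j) \sigma(i,k) \sigma(j,k) = -1$ for all distinct $i,j,k$. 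This sign cancellation is the only subtle point; everything else is a routine verification.

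Granting this identity, the formulas $S_i[w] = v_i$ and ${}^t w \tilde S(v) w = P(v)$ give $Q_i(\mathrm{grad}\,\tilde P(w)) = 16\, \tilde P(w) \, \varepsilon_i v_i$, i.e.\ $Q(\mathrm{grad}\,\tilde P(w)) = 16 \tilde P(w) \cdot v^*$ where $v^* = (\varepsilon_1 v_1, \ldots, \varepsilon_{p+q} v_{p+q})$. Since $P(v^*) = P(v)$, this yields $\tilde P(\mathrm{grad}\,\tilde P(w)) = 256\, \tilde P(w)^3$, and $S(16\tilde P(w) v^*) = 16 \tilde P(w)^2 S(v)^{-1}$; substituting back into the gradient formula gives
\begin{equation*}
\mathrm{grad}\,\tilde P \bigl( \mathrm{grad}\,\tilde P(w) \bigr) = 256\, \tilde P(w)^2 \, w.
\end{equation*}
Hence $w \mapsto \mathrm{grad}\,\tilde P(w)$ descends to a projective involution on $\mathbb{P}(\mathbf W)$, proving that $\tilde P$ is homaloidal.

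Finally, applying the same computation to $y := \mathrm{grad}\,\log \tilde P(w) = 4 S(Q(w))^{-1} w$ yields $Q(y) = (16/\tilde P(w)) v^*$ and therefore $\tilde P(y) = 256/\tilde P(w)$. Thus the multiplicative Legendre transform satisfies $\tilde P^*(\mathrm{grad}\,\log \tilde P(w)) = 1/\tilde P(w)$ with $\tilde P^* = \tilde P / 256$, so $\tilde P$ is an EKP-polynomial whose multiplicative Legendre transform coincides with $\tilde P$ up to the constant factor $1/256$.
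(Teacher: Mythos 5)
Your proposal is correct and follows essentially the same route as the paper: your identity $\tilde S(v)\,S_i\,\tilde S(v)=\varepsilon_i\bigl(2v_i\tilde S(v)-P(v)S_i\bigr)$ is precisely the paper's computation of $S_jS_iS_j$ and $S_jS_iS_k+S_kS_iS_j$ from the relations $(\ref{eqn:clifford cond1})$, $(\ref{eqn:clifford cond2})$, and both arguments reduce to $S_i[\mathrm{grad}\,\tilde P(w)]=2^4\varepsilon_i S_i[w]\,\tilde P(w)$ and hence $\tilde P(\mathrm{grad}\,\tilde P(w))=2^8\tilde P(w)^3$. The only (harmless) divergence is that you establish birationality directly by exhibiting the gradient map as a projective involution, whereas the paper deduces homaloidality from the Legendre-transform identity by citing \cite[Proposition 3.6]{EKP}.
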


\begin{proof}
It is easy to see that 
\[
\mathrm{grad}\, \tilde P(w) ={}^t\left(\frac{\partial \tilde  P(w)}{\partial w_1},\ldots,\frac{\partial P(w)}{\partial w_m}\right) 
 = 2^2 \sum_{i=1}^{p+q} \epsilon_i S_i[w]\cdot S_iw, 
\]
where $\epsilon_i$ $(1\leq i \leq p+q)$ are defined by $(\ref{eqn:sign})$. 
Let us calculate 
\[
\tilde P(\mathrm{grad}\, \tilde P(w)) 
 = \sum_{i=1}^{p+q} \epsilon_i S_{i}[\mathrm{grad}\, \tilde P(w)]^2 
\]
For any $i$, we have 
\begin{eqnarray*}
2^{-8}S_{i}[\mathrm{grad}\, \tilde P(w)]
 &=& S_{i}\left[\sum_{j=1}^{p+q} \epsilon_j S_j[w] S_jw\right] \\
 &=& \sum_{j=1}^{p+q} S_j[w]^2 (S_jS_{i}S_j)\left[w\right] 
   + \sum_{1\leq j < k \leq p+q} \epsilon_j \epsilon_k S_j[w]S_k[w] (S_jS_{i}S_k+S_kS_{i}S_j)\left[w\right]. 
\end{eqnarray*}
From the commutation relations $(\ref{eqn:clifford cond1})$ and $(\ref{eqn:clifford cond2})$, it follows that
\begin{eqnarray*}
S_jS_{i}S_j&=&\begin{cases}
              S_i & (i=j), \\
              -\epsilon_i\epsilon_jS_i & (i \ne j),
              \end{cases}\\
S_jS_{i}S_k+S_kS_{i}S_j
             &=&\begin{cases} 
                   0 & (j \ne k,\ j \ne i, k \ne i), \\
                   2S_j & (k = i), \\  
                   2S_k & (j = i).  
                   \end{cases}
\end{eqnarray*}
Hence 
\begin{eqnarray*}
2^{-8}S_{i}[\mathrm{grad}\,\tilde  P(w)]
 &=&  S_i[w]^3 -\epsilon_i S_i[w] \sum_{j\ne i} \epsilon_j S_j[w]^2
   + 2\epsilon_i S_i[w]\sum_{j\ne i} \epsilon_j S_j[w]^2 \\
 &=& S_i[w]^3 + \epsilon_i S_i[w] \sum_{j\ne i} \epsilon_j S_j[w]^2 \\
 &=& \epsilon_i S_i[w] \tilde P(w). 
\end{eqnarray*}
Thus we obtain 
\[
\tilde P(\mathrm{grad}\,\tilde  P(w)) 
 = 2^8 \tilde P(w)^3. 
\]
In other words, 
\[
\tilde P(\mathrm{grad}(\log \tilde P)(w)) = \tilde P(\tilde P(w)^{-1}\mathrm{grad}\,\tilde P(w)) =2^8 \tilde P(w)^{-1}.
\]
This shows that the multiplicative Legendre transform of $\tilde P$ coincides with $\tilde P$ itself (up to constant factor). 
Consequently, By \cite[Proposition 3.6]{EKP}, $\tilde P$ is a homaloidal EKP-polynomial. 
\end{proof}

\begin{rem}
\rm
Non-prehomogeneous homaloidal polynomials are constructed by Ciliberto, Russo, and Simis (\cite[\S 3]{CRS}). (In \cite[\S 4]{CRS} the authors proved that the determinants of subHankel-matrices are homaloidal. But the polynomials are also relative invariants of non-reductive regular prehomogeneous vector spaces.) 
Another examples of non-prehomogeneous (reducible) homaloidal rational functions are given by Letac and Massam (\cite{LM}). 
\end{rem}

%%%%%%%%%%%%%%%%%%%%%%%%%%%% Generic Case

\section{Proof of Lemma \ref{lem:key}}

\label{section:proof of key}
 
We prove Lemma \ref{lem:key} by induction on $n=p+q$ and complete the proof of Theorem \ref{thm:3.1}. 
It is obvious that the condition $(\sufcond)$ in Lemma \ref{lem:sufficient} holds for $p+q=1$. 
Assume that $p+q=2$. 
Unless $p=q=1$ and $S_1=\pm S_2$, then $S_1[x]$ and $S_2[x]$ are coprime  and the condition $(\sufcond)$ holds.   
When $p+q \geq 3$, Lemma \ref{lem:key} is equivalent to the following:
\begin{lem}
\label{lem:5.1}
If $p \geq q \geq 0$, $p+q \geq 3$ and $m > 4(p+q)-8$,  then the condition $(\sufcond)$  in Lemma \ref{lem:sufficient} holds for every $m$-dimensional representation of $R_{p,q}$. 
\end{lem}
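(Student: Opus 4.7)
The plan is to induct on $n = p+q \geq 3$. The base case $n = 3$ (so $(p,q) \in \{(3,0),(2,1)\}$) is handled by direct computation: by Lemma \ref{lem:pos def Clif} the irreducible representations have dimension $4$, so the hypothesis $m > 4$ forces $m \geq 8$, and with $S_1,\ldots,S_{p+q}$ in the canonical form of Lemma \ref{lem:canonical form} one solves the finite linear system coming from $\sum_i S_i[w] X_i[w] \equiv 0$ to confirm $(\sufcond)$.

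For the inductive step $n \geq 4$, the inequality $p \geq \lceil n/2 \rceil \geq 2$ allows an application of Lemma \ref{lem:canonical form}: writing $m = 2d$, we may assume $S_1 = 1_{d,d}$, $S_i = \bigl(\begin{smallmatrix} 0 & B_i \\ {}^tB_i & 0 \end{smallmatrix}\bigr)$ for $2 \leq i \leq p$, and $S_{p+j} = A_j \perp A_j$ for $1 \leq j \leq q$. Set $w = \bigl(\begin{smallmatrix} u \\ v \end{smallmatrix}\bigr)$ and decompose each $X_i$ into three $d \times d$ blocks $X_i^{(11)}, X_i^{(12)}, X_i^{(22)}$. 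Expanding $\sum_i S_i[w] X_i[w] \equiv 0$ and collecting by bidegree in $(u,v)$ yields one identity for each bidegree $(k, 4-k)$. The extremal bidegrees $(4,0)$ and $(0,4)$ reduce to
\[
u^Tu \cdot X_1^{(11)}[u] + \sum_{j=1}^q A_j[u]\, X_{p+j}^{(11)}[u] = 0, \qquad
-v^Tv \cdot X_1^{(22)}[v] + \sum_{j=1}^q A_j[v]\, X_{p+j}^{(22)}[v] = 0,
\]
which are $(\sufcond)$-type identities for the tuple $(1_d, A_1, \ldots, A_q)$. Since $1_d$ commutes with each $A_j$ while the $A_j$'s pairwise anticommute, this tuple is a representation of $R_{1, q}$ on $\R^d$; the inductive hypothesis (augmented by a direct argument to handle the fact that the first generator acts centrally) then determines the diagonal blocks up to an antisymmetric combination of $1_d$ and the $A_j$'s. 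The mixed bidegrees $(3,1)$, $(2,2)$, $(1,3)$ introduce the off-diagonal blocks $X_i^{(12)}$ together with the matrices $B_i$; exploiting the relations $B_i B_j = -B_j B_i$ for $i \neq j$ and $A_i B_j = B_j A_i$, we pin down each $X_i^{(12)}$ and match coefficients to obtain $X_i = \sum_j a_{ij} S_j$ with $a_{ij} = -a_{ji}$.

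The main obstacle will be the bidegree $(2, 2)$ equation, which couples every block of every $X_i$ with every $S_j$ simultaneously; the full strength of the hypothesis $m > 4(p+q) - 8$ is precisely what forces this coupled linear system to have only the expected kernel, and the verification leans on the case-by-case structure of $R_{p, q}$ and $R_{p, q}^+$ supplied by Lemma \ref{lem:T and T'}. A secondary subtlety is that the induction is applied to $R_{1, q}$ with a distinguished ``trivial'' first generator, a case not covered by the generic list of Lemma \ref{lem:key}; this has to be addressed either by slightly strengthening the inductive statement to admit such central slots, or by a direct reduction using the product decomposition $R_{1, q} = C_q \oplus C_q$ (together with the splitting of $\R^d$ into the $\pm 1$-eigenspaces of the central generator) to isolated $C_q$-module identities, at which point the standard structure theory of Clifford modules finishes the argument.
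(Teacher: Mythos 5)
Your skeleton — induction on $n=p+q$, reduction to the canonical form of Lemma \ref{lem:canonical form}, decomposition of $\sum_i S_i[w]X_i[w]\equiv 0$ by bidegree in $(u,v)$, and recognition that the extremal bidegrees give a $(\sufcond)$-condition for the tuple $(1_d,A_1,\dots,A_q)$, i.e.\ for a representation of $R_{1,q}$ — coincides with the paper's. The subtlety you flag about $R_{1,q}$ is real, but it is not that $e_1$ ``acts centrally'' in a problematic way (that is exactly the relation in $R_{1,q}$, so the induction applies directly once $d>4q-4$, which follows from $d>2(p+q)-4$ and $p\ge q$); the actual exception is $q=1$ with $A_1=\pm 1_d$, the pure $R_{1,1}$ case excluded from Lemma \ref{lem:key}, and the paper must carry a separate branch (its ``Case A'') for it through the \emph{entire} argument, not just in the extremal bidegrees.

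The genuine gap is in the mixed bidegrees, and your diagnosis of where the hypothesis $m>4(p+q)-8$ is spent is off. The bidegree $(2,2)$ identity is \emph{not} the main obstacle and does not lean on Lemma \ref{lem:T and T'} at all: once the diagonal blocks are pinned down from the $(4,0)$ and $(0,4)$ identities (plus a rank/degeneracy argument on the $(2,2)$ identity that uses $d>2(p-1)$), the $(2,2)$ identity collapses to $\sum_{i=2}^p(u,B_iv)(u,X_i^{(2)}v)\equiv 0$, which is the $(\sufcond)$ condition for the representation of $C_{p-1}$ given by $S_2,\dots,S_p$ and is disposed of by induction. The heavy lifting is in the bidegrees $(3,1)$ and $(1,3)$, which you dismiss with ``we pin down each $X_i^{(12)}$ and match coefficients.'' These identities mix terms like $(u,u)(u,X_1^{(2)}v)$, $(u,v)X_2^{(1)}[u]$, $(u,B_iv)X_i^{(1)}[u]$ and $A_j[u](u,Y_j^{(2)}v)$, and the paper needs a separate technical lemma (Lemma \ref{lem:5.3}): if $X[u](u,v)-(u,u)(u,Xv)=\sum_{i=1}^r Y_i[u](u,B_iv)$ with the $B_i$ skew and at most $s$ of the $Y_i$ non-scalar, and $d>2s$, then $X$ is scalar and both sides vanish. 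This eigenvalue-multiplicity argument is precisely where the dimensional hypothesis is consumed (with $s\le p+q-2$, so $d>2(p+q)-4\ge 2s$), and nothing in your proposal supplies it or a substitute; after applying it one still has to symmetrize in $u,v$ and invoke the induction hypothesis a third time, for the representation of $R_{p-1,q}$ given by $S_1,S_3,\dots,S_{p+q}$. Without this lemma the ``coupled linear system'' you describe has no reason to have only the expected kernel.
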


Indeed, denoting by $m_0$ the minimal dimension of representations of $R_{p,n-p}$   $(p=0,1, \dots , n)$, we have by Lemma \ref{lem:T and T'}
\[
\begin{array}{|c|c|c|c|c|c|c|c|c|c|c|c|} \hline 
n=p+q & 3 & 4 & 5 & 6 & 7 & 8 & 9 & 10 & 11 & 12 & \cdots \\\hline 
4(p+q)-8 & 4 & 8 & 12 & 16 & 20 & 24 & 28 & 32 & 36 & 40 & \cdots \\\hline 
m_0 & 2 & 4 & 8 & 8 & 16 & 16 & 16 & 16 & 32 & 64 & \cdots \\\hline 
\end{array} 
\]
Moreover $m_0 > 4(p+q)-8$ for $n=p+q \geq 12$. 
Hence, when $p+q \geq 3$, Lemma \ref{lem:key} follows immediately from Lemma \ref{lem:5.1}.

Now we consider the case $p+q \geq 3$ with $p \geq q \geq 0$. 
Since $p \geq 2$,  we may choose the basis matrices $S_1,\ldots,S_{p+q}$ as given in Lemma \ref{lem:canonical form}.
Then, the assumption $\sum_{i=1}^{p+q} S_i[w]X_i[w] = 0$ in $(\sufcond)$ 
 can be written as follows:
\begin{equation}
\{(u,u) -(v,v)\} X_1 [w] + 2 \sum_{i=2}^{p} (u,B_i v) X_i [w] 
 + \sum_{j=1}^q (A_j [u] +A_j [v] ) Y_j [w] =0, \label{eqn:5.1}
\end{equation}
where we put $w= \begin{pmatrix} u \\ v \end{pmatrix}$ $(u, v \in {\Bbb R}^d)$ with $d=m/2$ and $Y_j=X_{p+j}$.
We write the matrices $X_1, \dots, X_p, Y_1, \dots, Y_q$ in the form
\begin{gather*}
X_i=\left( \begin{array}{cc}
X_i^{(1)} & X_i^{(2)} \\
{}^t X_i^{(2)} & X_i^{(3)} 
\end{array} \right) , \quad X_i^{(1)} , X_i^{(3)} \in {\rm Sym}(d,{\Bbb R}),\ X_i^{(2)} \in M(d,{\Bbb R}), \\
Y_j=\left( \begin{array}{cc}
Y_j^{(1)} & Y_j^{(2)} \\
{}^t Y_j^{(2)} & Y_j^{(3)} 
\end{array} \right) , \quad Y_j^{(1)} , Y_j^{(3)} \in {\rm Sym}(d,{\Bbb R}),\ Y_j^{(2)} \in M(d,{\Bbb R}).
\end{gather*}
Then the identity (\ref{eqn:5.1}) is equivalent to the following 5 identities:
\begin{gather}
(u,u) X_1^{(1)} [u] + \displaystyle{\sum_{j=1}^q} A_j [u] Y_j^{(1)} [u] = 0 , 
\label{eqn:5.2} \\
         -(v,v) X_1^{(3)} [v] + \displaystyle{\sum_{j=1}^q} A_j [v] Y_j^{(3)} [v] = 0, \label{eqn:5.3} \\
(u,u) X_1^{(3)} [v] -(v,v)X_1^{(1)} [u] + 4 \sum_{i=2}^p  (u, B_i v) (u, X_i^{(2)} v) 
 + \sum_{j=1}^q (A_j [u] Y_j^{(3)} [v] +A_j [v] Y_j^{(1)} [u] ) = 0, \nonumber \\
(u,u) (u,X_1^{(2)} v) +(u,v) X_2^{(1)} [u] + \sum_{i=3}^p (u,B_i v) X_i^{(1)} [u] 
 + \sum_{j=1}^q A_j [u](u,Y_j^{(2)} v)  = 0, \label{eqn:5.4} \\
-(v,v) (u,X_1^{(2)} v) +(u,v) X_2^{(3)} [v] + \sum_{i=3}^p (u,B_i v) X_i^{(3)} [v] 
 + \sum_{j=1}^q  A_j [v] (u, Y_j^{(2)} v ) = 0. \label{eqn:5.5}
\end{gather}
It is convenient to rewrite the third identity as follows:
\begin{eqnarray}
\lefteqn{
-4 \sum_{i=2}^p (u, B_i v) (u, X_i^{(2)} v) 
}\nonumber\\
 & & = (u,u) X_1^{(3)} [v] -(v,v) X_1^{(1)} [u] 
   + \sum_{j=1}^q (A_j [u] Y_j^{(3)} [v] +A_j [v] Y_j^{(1)} [u]) . \label{eqn:5.6}
\end{eqnarray}

The following lemma is a consequence of the identities $(\ref{eqn:5.2})$, $(\ref{eqn:5.3})$ and $(\ref{eqn:5.6})$. 

\begin{lem}
\label{lem:5.2} 
There exist $a_{ij} \in{\Bbb R}$ satisfying $a_{ij} +a_{ji}=0$ and
\begin{gather} 
X_1^{(1)} = \displaystyle{\sum_{j=1}^q} a_{1, p+j} A_j , \quad 
Y_j^{(1)} =a_{p+j,1} 1_d + \displaystyle{\sum_{k=1}^q} a_{p+j, p+k} A_k \quad (1 \leq j \leq q),  \label{eqn:5.7}
\\
X_1^{(3)} =\displaystyle{\sum_{j=1}^q} a_{1,p+j} A_j, \quad 
Y_j^{(3)} =-a_{p+j,1} 1_d + \displaystyle{\sum_{k=1}^q} a_{p+j, p+k} A_k
\quad (1 \leq j \leq q), \label{eqn:5.8}
\\
X_i^{(2)} = \displaystyle{\sum_{j=2}^p} a_{ij} B_j \quad (2 \leq i \leq p). \label{eqn:5.9}
\end{gather}
\end{lem}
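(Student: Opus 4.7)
The plan is to derive $(\ref{eqn:5.7})$, $(\ref{eqn:5.8})$, and $(\ref{eqn:5.9})$ by applying the inductive form of condition $(\sufcond)$ to the three identities $(\ref{eqn:5.2})$, $(\ref{eqn:5.3})$, $(\ref{eqn:5.6})$ respectively. The hypothesis $p\geq q$ and $m>4(p+q)-8$ gives $d=m/2>4(1+q)-8$, so by Lemma \ref{lem:5.1} condition $(\sufcond)$ holds inductively for the symmetric $R_{1,q}$-representation on $\R^d$ carried by $1_d,A_1,\ldots,A_q$ (these matrices do satisfy the $R_{1,q}$-relations, with $1_d$ playing the role of the single ``positive'' basis vector).

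Identity $(\ref{eqn:5.2})$ reads $1_d[u]X_1^{(1)}[u]+\sum_j A_j[u]Y_j^{(1)}[u]=0$, which is exactly the hypothesis of $(\sufcond)$ applied to the symmetric matrices $(X_1^{(1)},Y_1^{(1)},\ldots,Y_q^{(1)})$ against the basis $(1_d,A_1,\ldots,A_q)$; its conclusion gives $(\ref{eqn:5.7})$ with some antisymmetric constants $\alpha_{ij}$. Identity $(\ref{eqn:5.3})$, after absorbing the minus sign into $-X_1^{(3)}$, yields analogously an antisymmetric family $\beta_{ij}$ producing the expressions in $(\ref{eqn:5.8})$, \emph{a priori} with different constants. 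The essential step is to show $\beta_{1,1+j}=-\alpha_{1,1+j}$ and $\beta_{1+j,1+k}=\alpha_{1+j,1+k}$, so that one consistent antisymmetric family $a_{ij}$ may be used.

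For this, I substitute both expansions into $(\ref{eqn:5.6})$ and equate as polynomials in $(u,v)$ of bidegree $(2,2)$. After collection and use of antisymmetry, the right-hand side reduces to
\[
-\sum_j(\alpha_{1,1+j}+\beta_{1,1+j})\bigl(A_j[v]\cdot 1_d[u]+(v,v)\,A_j[u]\bigr)+\sum_{j\neq k}(\alpha_{1+j,1+k}-\beta_{1+j,1+k})\,A_j[v]\,A_k[u],
\]
while the left-hand side is $-4\sum_{i=2}^p(u,B_iv)(u,X_i^{(2)}v)$, a sum of products of bilinear forms. Exploiting the linear independence of the quadratic forms $\{(v,v),A_1[v],\ldots,A_q[v]\}$ and of the matrices $\{1_d,A_1,\ldots,A_q\}$ (both of which hold because the $R_{0,q}$-action carried by the $A_j$'s is faithful in our dimension range), I match coefficients to force $\alpha+\beta=0$ in the first index and $\alpha=\beta$ in the second. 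The identity then collapses to $\sum_{i=2}^p(u,B_iv)(u,X_i^{(2)}v)=0$, which is a bilinear analogue of $(\sufcond)$ for the Clifford-like family $\{B_2=1_d,B_3,\ldots,B_p\}$ of mutually anti-commuting matrices; expanding $X_i^{(2)}$ in the linearly independent family $\{B_2,\ldots,B_p\}$ and using the commutation relations among the $B_i$'s yields $(\ref{eqn:5.9})$ with antisymmetric $a_{ij}$ that merge consistently with the $a_{ij}$'s from $(\ref{eqn:5.7})$ and $(\ref{eqn:5.8})$.

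The main obstacle is the coefficient-matching step inside $(\ref{eqn:5.6})$: one has to disentangle the independent matrix-valued contributions of types $(u,u)A_j[v]$, $(v,v)A_j[u]$, and $A_j[v]A_k[u]$ on the right from the bilinear-times-bilinear structure on the left, and the dimensional hypothesis $m>4(p+q)-8$ enters essentially to guarantee the independences used. In the small-dimensional exceptional cases excluded from Lemma \ref{lem:key}, these independences break down and extra algebraic relations appear, which is exactly the source of the strictly larger symmetry algebra $\gerg_{p,q}(\rho)$ observed in those cases.
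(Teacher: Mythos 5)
Your route for the generic case is essentially the paper's: apply the inductive condition $(\sufcond)$ for the $R_{1,q}$-family $1_d,A_1,\ldots,A_q$ to $(\ref{eqn:5.2})$ and $(\ref{eqn:5.3})$, reconcile the two a priori different antisymmetric families through $(\ref{eqn:5.6})$, and then deduce $(\ref{eqn:5.9})$ from the resulting identity $\sum_{i=2}^p(u,B_iv)(u,X_i^{(2)}v)=0$ by induction on the representation of $C_{p-1}$ carried by $S_2,\ldots,S_p$. However, there are two genuine gaps. The first is that you invoke $(\sufcond)$ for $1_d,A_1,\ldots,A_q$ unconditionally. This is illegitimate exactly when $q=1$ and $A_1=\pm1_d$: that family is then a \emph{pure} representation of $R_{1,1}$, which is one of the excluded cases of Lemma \ref{lem:key} (the ``$p+q=2$, pure'' exception), so $(\sufcond)$ is false for it and $(\ref{eqn:5.2})$, $(\ref{eqn:5.3})$ only yield $Y_1^{(1)}=-\varepsilon X_1^{(1)}$, $Y_1^{(3)}=\varepsilon X_1^{(3)}$, not the scalar form required by $(\ref{eqn:5.7})$--$(\ref{eqn:5.8})$. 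Your parenthetical justification via ``faithfulness of the $C_q$-action'' does not help: nothing in Lemma \ref{lem:canonical form} excludes $A_1=\pm1_d$, and this situation really occurs. The paper devotes half of its proof (``Case A'') to it, replacing the induction by a rank argument: for fixed $v$ the left side of $(\ref{eqn:5.6})$ has rank at most $2(p-1)<d$ as a quadratic form in $u$, which forces $\det\bigl(X_1^{(3)}[v]1_d-(v,v)X_1^{(1)}\bigr)=0$ and hence that $X_1^{(1)}$, $X_1^{(3)}$ are equal scalars.

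The second gap is the coefficient-matching step in $(\ref{eqn:5.6})$ even in the good case. Since the left-hand side $-4\sum_i(u,B_iv)(u,X_i^{(2)}v)$ is not known to vanish and the $X_i^{(2)}$ range over \emph{all} symmetric matrices, separating the two sides is not a linear-independence statement about the displayed monomials; you would have to show that the span of all forms $(u,B_iv)(u,Xv)$ meets the span of $(u,u)A_j[v]$, $(v,v)A_j[u]$, $A_j[u]A_k[v]$ trivially, which is precisely what needs proof. The paper's actual mechanism is: (i) the left side is a degenerate quadratic form in $u$ because its rank is at most $2(p-1)<d$; (ii) by self-duality of the quadratic mapping defined by $1_d,A_1,\ldots,A_q$, degeneracy of $c_0(v)1_d+\sum_jc_j(v)A_j$ is equivalent to $c_0(v)^2-\sum_jc_j(v)^2\equiv0$ (this is $(\ref{eqn:5.14})$); and (iii) the \emph{algebraic} independence of $(v,v),A_1[v],\ldots,A_q[v]$ together with the fact that a sum of squares of real numbers vanishes only termwise then forces $a_{1,p+j}+b_{p+j,1}=0$ and $a_{p+j,p+k}+b_{p+k,p+j}=0$. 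Steps (i)--(iii), in particular the sum-of-squares use of reality, are absent from your sketch, and without them the ``matching'' does not go through. The concluding derivation of $(\ref{eqn:5.9})$ is fine in spirit, but it should be phrased as an application of the induction hypothesis to the $2d$-dimensional representation of $C_{p-1}$ rather than an ad hoc expansion in $B_2,\ldots,B_p$.
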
   

\begin{proof}
First note that, from $(\ref{eqn:5.7})$ and $(\ref{eqn:5.8})$, we see  that the right-hand side of (\ref{eqn:5.6}) is identically zero and 
\[
\sum_{i=2}^p (u,B_i v) (u, X_i^{(2)} v) = 0. 
\]
This implies the identity (\ref{eqn:5.9}), since, by the induction assumption, the condition $(\sufcond)$ holds for the $2d$-dimensional representation of $C_{p-1}$ determined by $S_2, \dots , S_p$.
Hence it is sufficient to prove the identities (\ref{eqn:5.7}), (\ref{eqn:5.8}). 
In the subsequent discussion, we have to distinguish two cases, namely, the case where $q=1$ and $A_1 = \pm 1_d$ and the case where $q \ne 1$ or $A_1 \ne \pm 1_d$. 
We refer to the first case as Case A and the second case as Case B. 
\\[2ex]
{\bf Case A:} We put $A_1= \varepsilon 1_d$ $(\varepsilon = \pm 1)$. 
Then the identities (\ref{eqn:5.2}) and (\ref{eqn:5.3}) become 
\[
(u,u) X_1^{(1)} [u] + \varepsilon (u,u) Y_1^{(1)} [u] =0, \quad 
-(v,v) X_1^{(3)} [v] + \varepsilon (v,v) Y_1^{(3)} [v] =0.
\]
This implies that
\begin{equation}
Y_1^{(1)} = - \varepsilon X_1^{(1)} , \quad Y_1^{(3)} = \varepsilon X_1^{(3)}. \label{eqn:5.10}
\end{equation}
We substitute (\ref{eqn:5.10}) to (\ref{eqn:5.6}) to get
\[
-2 \sum_{i=2}^p (u,B_i v) (u,X_i^{(2)} v) = (u,u) X_1^{(3)} [v] -(v,v) X_1^{(1)} [u].
\]
Fix $v$ and consider the left-hand side of this identity as a quadratic form of $u$. 
Then the rank of the quadratic form is not greater than $2(p-1)$. 
Since $d > 2(p+1)-4 =2(p-1)$ by the assumption of Lemma \ref{lem:5.1}, 
it is a degenerate quadratic form of $u$.  
Hence by the expression on the right-hand side we have $\det (X_1^{(3)} [v] 1_d -(v,v) X_1^{(1)} ) =0.$ 
Since $v$ is arbitrary, this shows that $X_1^{(3)} = \lambda 1_d$ for some eigenvalue $\lambda$ of $X_1^{(1)}$, namely, $X_1^{(3)}$ is a scalar matrix. 
Similarly we can prove that $X_1^{(1)} $ is also a scalar matrix and we get 
\[
X_1^{(1)} =a_{1,p+1} A_1, \quad X_1^{(3)} = b_{1, p+1} A_1
\]
for some constants $a_{1,p+1}, b_{1, p+1}$. 
Then, since $(u,u) X_1^{(3)} [v] -(v,v) X_1^{(1)} [u] $ is degenerate, we have 
\[
a_{1,p+1} =b_{1,p+1}. 
\]
Furthermore (\ref{eqn:5.10}) implies that
\[
Y_1^{(1)} = - \varepsilon a_{1,p+1} A_1 = -a_{1, p+1} 1_d, \quad 
 Y_1^{(3)} = \varepsilon a_{1,p+1} A_1 = a_{1, p+1} 1_d.
\]
Hence putting $a_{p+1, 1}=-a_{1, p+1}$, we obtain 
\[
Y_1^{(1)} =a_{p+1,1} 1_d , \quad Y_1^{(3)} =-a_{p+1, 1} 1_d 
\]
and $a_{p+1, p+1} =b_{p+1, p+1}=0$.  
This proves the identities (\ref{eqn:5.7}) and (\ref{eqn:5.8}).
\\[2ex]
{\bf Case B:}  
In this case, since $ q \ne 1$ or $ A_1 \neq \pm 1_d$, by the induction assumption, 
the $d$-dimensional representation of $R_{1,q}$ determined by $1_d, A_1, \dots , A_q$ satisfies the condition $(\sufcond)$.   
Hence from the identities (\ref{eqn:5.2})  and (\ref{eqn:5.3}) we have
\begin{gather}
X_1^{(1)} = \sum_{j=1}^q a_{1, p+j} A_j , \quad 
Y_j^{(1)} = a_{p+j, 1} 1_d + \sum_{k=1}^q a_{p+j, p+k} A_k  \quad (1 \leq j \leq q), \label{eqn:5.11}\\
X_1^{(3)} = \sum_{j=1}^q b_{1,p+j} A_j , \quad 
Y_j^{(3)} =-b_{p+j, 1} 1_d + \sum_{k=1}^q b_{p+j, p+k} A_k   \quad (1 \leq j \leq q), \label{eqn:5.12}
\end{gather}
where $a_{ij}, b_{ij} $ satisfy that $ a_{ij} +a_{ji}=0$, $b_{ij} +b_{ji}=0$. 
We rewrite the right-hand side of (\ref{eqn:5.6}) by using (\ref{eqn:5.11}) and (\ref{eqn:5.12}) to get
\begin{eqnarray}
\lefteqn{
-4 \sum_{i=2}^p (u,B_i v) (u, X_i^{(2)} v) 
}\nonumber\\
 & & = - (u,u) \sum_{j=1}^q (a_{p+j,1} + b_{1, p+j} ) A_j [v] \nonumber\\
 & &  \hspace{5mm} {} + \sum_{j=1}^q A_j [u] \left\{(a_{1,p+j} + b_{p+j,1} ) (v,v) 
    - \sum_{k=1}^q (a_{p+j, p+k} + b_{p+k, p+j} ) A_k [v] \right\}.
\label{eqn:5.13}
\end{eqnarray}
As in Case A, the quadratic form of $u$ (for an arbitrarily fixed $v$) defined by the left-hand side of  (\ref{eqn:5.13})  is degenerate. 
Indeed, if $q=0$, then the right-hand side vanishes, and 
if $q>0$, then the rank of the quadratic form is not greater than $2(p-1) \leq 2(p+q)-4 < d$.  
By the self-duality of the quadratic mapping defined by $1_d, A_1, \dots , A_q$, the linear combination of $(u,u), A_1 [u], \dots , A_q [u]$ on the right-hand side of (\ref{eqn:5.13}) is degenerate, only when  
\begin{eqnarray}
\lefteqn{
\left(\sum_{j=1}^q (a_{p+j,1} + b_{1,p+j}) A_j [v]  \right)^2 
} \nonumber\\
 & & - \sum_{j=1}^q \left\{ (a_{1,p+j} + b_{p+j,1} ) (v,v) - \sum_{k=1}^q (a_{p+j,p+k} +b_{p+k, p+j} ) A_k [v] \right\}^2 =0. \label{eqn:5.14}
\end{eqnarray}
Furthermore in Case B the quadratic mapping defined by $1_d, A_1, \dots , A_q$ is non-degenerate (see Theorem \ref{thm:degenerate}), and hence the quadratic forms $(u,u), A_1 [u], \dots , A_q [u]$ are algebraically independent. 
Therefore, comparing the coefficients of $(v,v)^2$ on 
the both sides of (\ref{eqn:5.14}), we obtain
\[
\sum_{j=1}^q (a_{1,p+j} +b_{p+j,1} )^2 =0.
\]
Since the constants $a_{1,p+j}$, $b_{p+j,1}$ are real numbers, we have
\[
a_{1,p+j} +b_{p+j,1} =0 \quad (1 \leq j \leq q).
\]
and, by the skew-symmetry of $a_{ij}$, $b_{ij}$, 
\[
a_{p+j,1} +b_{1,p+j}=0 \quad (1 \leq j \leq q).
\]
Hence, by (\ref{eqn:5.14}),  we get 
\[
\sum_{j=1}^q \left\{\sum_{k=1}^q (a_{p+j,p+k} +b_{p+k, p+j}) A_k [v] \right\}^2 = 0.
\]
This implies 
\[
\sum_{k=1}^q  (a_{p+j, p+k} +b_{p+k,p+j} ) A_k [v] =0 \quad (1 \leq j \leq q) .
\]
Since the basis matrices $A_1 , \dots , A_q$ are linearly independent, we have
\[
a_{p+j,p+k} +b_{p+k,p+j} =0 \quad (1 \leq j,k \leq q).
\]
By using the skew-symmetry of $a_{ij}, b_{ij}$, we also have 
\begin{equation}
a_{p+j,1}=b_{p+j,1}, \quad a_{p+j,p+k} =b_{p+j,p+k}. \label{eqn:5.15}
\end{equation}
Now the identities (\ref{eqn:5.7}) and (\ref{eqn:5.8}) follow from (\ref{eqn:5.11}), (\ref{eqn:5.12}) and (\ref{eqn:5.15}). 
\end{proof}
  
To proceed further, we need the following lemma.

\begin{lem}
\label{lem:5.3} 
We assume that the identity 
\begin{equation}
X[u] (u,v) -(u,u)(u,Xv) = 
\sum_{i=1}^r Y_i [u] (u,B_i v) \label{eqn:5.16}
\end{equation}
holds for $X, Y_1 , \dots , Y_r \in {\rm Sym}(d,{\Bbb R})$ and  $B_1, \dots , B_r \in {\rm Alt}(d,{\Bbb R})$.  
Let $s$ be the number of symmetric matrices $Y_i$ that are not scalar matrices.  
If $d > 2s$, 
then $X$ is a scalar matrix and the both sides of {\rm (\ref{eqn:5.16})} are identically zero.
\end{lem}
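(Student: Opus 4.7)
The strategy is to convert the polynomial identity $(\ref{eqn:5.16})$ into a rigid matrix identity that can be analyzed using real-symmetric positivity.

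\textbf{Step 1 (Vector form).} Since $(\ref{eqn:5.16})$ is linear in $v$, reading off the coefficient of each $v_j$ and using $B_i^{\top}=-B_i$ converts it into the equivalent vector identity
\[
F(u):=X[u]\,u-(u,u)\,Xu+\sum_{i=1}^{r}Y_i[u]\,B_iu=0\qquad(u\in\R^d).
\]

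\textbf{Step 2 (Monomial relations).} Diagonalize $X=\mathrm{diag}(\lambda_1,\ldots,\lambda_d)$ in an orthonormal eigenbasis, and write $\mathcal I$ for the index set of the $s$ non-scalar $Y_i$'s. Since a scalar $Y_i=c_i\,1_d$ has $(Y_i)_{jk}=0$ for $j\neq k$, such $Y_i$'s do not contribute to off-diagonal relations. Comparing coefficients of $u_j^2u_k$ ($j\ne k$) in the $k$-th component of the vector identity yields
\[
\lambda_j-\lambda_k=2\!\!\sum_{i\in\mathcal I}(Y_i)_{jk}(B_i)_{jk},\tag{I}
\]
and comparing coefficients of $u_ju_lu_k$ with $j,l,k$ distinct yields
\[
\sum_{i\in\mathcal I}\bigl[(Y_i)_{jk}(B_i)_{lk}+(Y_i)_{lk}(B_i)_{jk}\bigr]=0.\tag{III}
\]

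\textbf{Step 3 (Rank packaging).} Suppose for contradiction that $X$ has two distinct eigenvalues $\lambda\neq\mu$ with eigenspaces $V_\lambda,V_\mu$ of dimensions $d_\lambda,d_\mu\ge 1$. Fix $k\in V_\mu$ and assemble the real $d_\lambda\times s$ matrices $A,B$ with $A_{ji}=(Y_i)_{jk}$, $B_{ji}=(B_i)_{jk}$ ($j\in V_\lambda$, $i\in\mathcal I$). Then (I) and (III) combine into the single matrix identity
\[
AB^{\top}+BA^{\top}=(\lambda-\mu)\,I_{d_\lambda}.
\]

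\textbf{Step 4 (Real positivity).} Set $U=A+B$, $V=A-B$. A short algebraic manipulation gives
\[
AB^{\top}+BA^{\top}=\tfrac12\bigl(UU^{\top}-VV^{\top}\bigr),
\]
so the identity above becomes $UU^{\top}-VV^{\top}=2(\lambda-\mu)\,I_{d_\lambda}$. Since $U\in\R^{d_\lambda\times s}$, the matrix $UU^{\top}$ is positive-semidefinite of rank at most $s$, hence has at most $s$ positive eigenvalues. Assuming $\lambda>\mu$ (otherwise swap the roles of $U$ and $V$), the matrix $UU^{\top}=2(\lambda-\mu)I_{d_\lambda}+VV^{\top}$ is positive-definite of rank $d_\lambda$. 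Comparing the number of positive eigenvalues forces $d_\lambda\le s$. The symmetric argument with the roles of $V_\lambda$ and $V_\mu$ exchanged gives $d_\mu\le s$, so $d=d_\lambda+d_\mu\le 2s$, contradicting the hypothesis $d>2s$.

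\textbf{Step 5 (Conclude).} Hence $X$ has a single eigenvalue and is a scalar matrix, say $X=\lambda\,1_d$. In that case the left-hand side of $(\ref{eqn:5.16})$ equals $\lambda(u,u)(u,v)-\lambda(u,u)(u,v)=0$ identically, so the right-hand side vanishes identically too.

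\textbf{Main obstacle.} The technical crux is Step~4: a naive rank estimate applied to (I) and (III) only shows that each eigenvalue of $X$ has multiplicity at least $d-2s$, which by itself is insufficient and only excludes two eigenvalues when $d>4s$. Sharpening the bound to the stated hypothesis $d>2s$ relies essentially on real-symmetric positivity, encoded in the identity $AB^{\top}+BA^{\top}=\tfrac12(UU^{\top}-VV^{\top})$, which converts a rank question into a signature question. Organizing the remaining coefficient relations so that this positivity argument applies cleanly on each pair of candidate eigenspaces is the heart of the proof.
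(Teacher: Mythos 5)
Your Steps 1--4 are correct, and they are at bottom the same argument as the paper's: the paper compares the coefficient of $u_k$ in the $k$-th component of the vector identity to get $\sum_{j\ne k}(\lambda_j-\lambda_k)u_j^2=-\sum_i \psi_{ik}(u)\phi_{ik}(u)$, a sum of $s$ products of linear forms, and then counts positive and negative eigenvalues of that quadratic form. Your identity $AB^{\top}+BA^{\top}=\tfrac12(UU^{\top}-VV^{\top})$ is exactly the matrix packaging of writing each product $\psi\phi$ as $\tfrac14\bigl[(\psi+\phi)^2-(\psi-\phi)^2\bigr]$, so the ``real positivity'' you isolate in Step 4 is the same signature count the paper uses.

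The genuine gap is in the final assembly, at the line ``$d=d_\lambda+d_\mu\le 2s$.'' That equality presupposes that $X$ has \emph{exactly} two distinct eigenvalues. What your argument actually establishes is that every eigenvalue of a non-scalar $X$ has multiplicity at most $s$; if $X$ has $t\ge 3$ distinct eigenvalues this only yields $d\le ts$, which is perfectly compatible with $d>2s$ (e.g.\ $s=2$, $d=5$, multiplicities $2,2,1$), so no contradiction is reached and the lemma is not proved in that case. The fix stays inside your framework: take $k$ in the eigenspace of the \emph{smallest} eigenvalue $\mu$ and let $j$ range over \emph{all} indices outside that eigenspace. Relations (I) and (III) then give $AB^{\top}+BA^{\top}=\mathrm{diag}(\lambda_j-\mu)_j$, a positive definite matrix of size $d-d_\mu$, so $UU^{\top}=2\,\mathrm{diag}(\lambda_j-\mu)_j+VV^{\top}$ is positive definite of rank $d-d_\mu\le s$, i.e.\ $d_\mu\ge d-s$; the same argument for the largest eigenvalue gives $d\ge 2(d-s)$, hence $d\le 2s$, the desired contradiction. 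This lower bound $d-s$ on the multiplicities of the two \emph{extremal} eigenvalues, rather than an upper bound $s$ on the multiplicities of an arbitrary pair, is precisely how the paper closes the argument.
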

 
\begin{proof}
We may assume that $X$ is a diagonal matrix with diagonal entries $\lambda_1 \geq \cdots  \geq \lambda_d$. 
Then, since 
\[
(X[u] 1_d -(u,u) X) u = -\displaystyle{ \sum_{i=1}^r } Y_i [u] B_i u, 
\]
by comparing the $k$-th entries of the left- and right-hand sides, we have 
\[
\left(\sum_{i \neq k} ( \lambda_i - \lambda_k ) u_i^2 \right) u_k 
  = - \sum_{i=1}^r Y_i [u] \psi_{ik} (u), 
\]
where $ \psi_{ik} (u) $ is the linear form of $u$ which gives the $k$-th entry of $B_i u$.  
The left-hand side of this identity is of degree 1 with respect to $u_k$. 
Since $B_i $ is a skew-symmetric matrix, $u_k$ does not appear in $\psi_{ik} (u)$. 
We consider $Y_i [u]$ as a quadratic polynomial of $u_k$ and denote by $\phi_{ik} (u)$ the linear form of $u_1,\ldots,u_{k-1},u_{k+1},\ldots,u_d$ appearing as the coefficient of $u_k$. 
We may assume that $Y_i$ are scalar matrices for $i \geq s+1$. 
Then $\phi_{ik}(u)=0$ for $i \geq s+1$ and we have
\[
{\sum_{i \neq k} } ( \lambda_i - \lambda_k ) u_i^2 
 = -\sum_{i=1}^r  \psi_{ik} (u) \phi_{ik} (u). 
\]
Note that the number of positive eigenvalues and that of negative eigenvalues of the quadratic form on the right-hand side are not greater than $r$. 
Consider the case where $k=1$. 
Then the left-hand side of this identity is negative semidefinite. 
Hence the multiplicity of $\lambda_1 $ is not smaller than $d-r$.  
Similarly, if we consider the case where $k=d$, the left-hand side of this identity is positive semidefinite and the multiplicity of $\lambda_d$ is at least $d-r$.  
Therefore, if $X$ has distinct eigenvalues, then $d \geq 2(d-r) $ namely $2r \geq d$.
This contradicts the assumption $d > 2r$. 
Hence,  $X$ is a scalar matrix and the both sides of (\ref{eqn:5.16}) are identically zero. 
\end{proof}

Let us return to the proof of Lemma \ref{lem:5.1} and examine the identities  $(\ref{eqn:5.4})$ and $(\ref{eqn:5.5})$.
Substitute $v=u$ to (\ref{eqn:5.4}) and  (\ref{eqn:5.5}). 
Then, since $B_i$ $(i \geq 3)$ are skew-symmetric matrices and hence $(u, B_i u) =0$,  we obtain 
\begin{gather}
(u,u) ( X_1^{(2)} + X_2^{(1)} ) [u] + \displaystyle{\sum_{j=1}^q } A_j [u] Y_j^{(2)} [u] =0,  \label{eqn:5.17}
\\
(u,u) ( X_2^{(3)} - X_1^{(2)} ) [u] + \displaystyle{\sum_{j=1}^q } A_j [u] Y_j^{(2)} [u] =0.  
\label{eqn:5.18}
\end{gather}
Here we have to treat Case A and Case B separately as in the proof of Lemma \ref{lem:5.2}. 
\\[2ex]
{\bf Case A:} 
In this case, $q=1$, $A_1 = \varepsilon 1_d$ $(\varepsilon = \pm 1)$  
and the identities (\ref{eqn:5.17}) and (\ref{eqn:5.18}) become
\[
(u,u) (X_1^{(2)} +X_2^{(1)} + \varepsilon Y_1^{(2)} ) [u] =0, \quad 
 (u,u) (X_2^{(3)} -X_1^{(2)} + \varepsilon Y_1^{(2)} ) [u] =0.
\]
Hence, putting 
\begin{equation}
Z^+ =X_1^{(2)} + \varepsilon Y_1^{(2)} , \quad 
Z^- =X_1^{(2)} - \varepsilon Y_1^{(2)}, \quad
Z_s^{\pm} ={1 \over 2} ( Z^{\pm} + {}^t Z^{\pm}), \quad 
Z_a^{\pm} ={1 \over 2} (Z^{\pm} - {}^t Z^{\pm} ), \label{eqn:5.19}
\end{equation}
we have 
\[
Z_s^{+} =-X_2^{(1)} , \quad Z_s^- =X_2^{(3)}. 
\]
This yields that
\begin{equation}
X_1^{(2)}=Z_a^{+}-X_2^{(1)}-\varepsilon Y_1^{(2)}
           =Z_a^{-}+X_2^{(3)}+\varepsilon Y_1^{(2)}. \label{eqn:5.20}
\end{equation}
By substituting (\ref{eqn:5.20}) to (\ref{eqn:5.4}) and (\ref{eqn:5.5}), we get 
\begin{gather*}
(u,u) (u, Z_a^+ v) +(u,v) X_2^{(1)} [u] -(u,u) (u,X_2^{(1)} v) 
 + \sum_{i=3}^p (u,B_i v) X_i^{(1)} [u] =0, \\
-(v,v) (u, Z_a^- v) +(u,v) X_2^{(3)} [v] -(v,v) (u,X_2^{(3)} v) 
 + \sum_{i=3}^p (u,B_i v) X_i^{(3)} [v] =0. 
\end{gather*}
We rewrite these identities as follows: 
\begin{gather*}
(u,v) X_2^{(1)} [u] -(u,u) (u,X_2^{(1)} v) 
 = - (u,u) (u,Z_a^+ v) - \sum_{i=3}^p (u,B_i v) X_i^{(1)} [u], \\
(u,v) X_2^{(3)} [v] -(v,v) (u,X_2^{(3)} v) 
 = (v,v) (u,Z_a^- v) - \sum_{i=3}^p (u,B_i v) X_i^{(3)} [v]. 
\end{gather*}
Lemma \ref{lem:5.3} can apply to these identities, 
since  $s$ in Lemma \ref{lem:5.3} is not greater than $p-2$ and 
$d > 2(p+q) -4 > 2s$.  
Thus we obtain for some $\alpha,\beta$ 
\begin{gather}
X_2^{(1)} = \alpha 1_d, \quad X_2^{(3)} =\beta 1_d, \label{eqn:5.21} \\
(u,u) (u, Z_a^+ v) + \sum_{i=3}^p (u, B_i v) X_i^{(1)} [u] =0, \label{eqn:5.22}\\
(v,v) (u, Z_a^- v) - \sum_{i=3}^p (u, B_i v) X_i^{(3)} [v] =0. \label{eqn:5.23}
\end{gather}
Summing (\ref{eqn:5.22}) (resp.\ (\ref{eqn:5.23})) and the identity obtained by exchanging $u$ and $v$ in (\ref{eqn:5.22}) (resp.\ (\ref{eqn:5.23})), we get 
\begin{gather}
\left\{ (u,u)-(v,v) \right\} (u,Z_a^+ v) + \sum_{i=3}^p (u,B_i v) (X_i^{(1)} [u] -X_i^{(1)} [v]) =0, \label{eqn:5.24} \\
\left\{ (u,u)-(v,v) \right\} (u,Z_a^- v) 
 - \sum_{i=3}^p (u,B_i v) (X_i^{(3)} [u] -X_i^{(3)} [v]) =0. \label{eqn:5.25}
\end{gather} 
Since the condition $(\sufcond)$ holds for the $2d$-dimensional representation of $R_{p-1,0} $ determined by $S_1, S_3, \dots, S_p $ by induction hypothesis, 
there exist constants $\gamma_{i,1} , \gamma_{1,i} , \delta_{i,1} , \delta_{1,i} \in {\Bbb R}$ $(3 \leq i \leq p)$ satisfying 
$ \gamma_{i,1}=-\gamma_{1,i}$, $\delta_{i,1}=-\delta_{1,i}$ and  
\begin{gather}
Z_a^+ =\displaystyle{\sum_{i=3}^p} \gamma_{1,i} B_i , \quad 
Z_a^- = \sum_{i=3}^p \delta_{1,i} B_i, \label{eqn:5.26} \\ 
X_i^{(1)} =\gamma_{i,1} 1_d , \quad X_i^{(3)} = -\delta_{i,1} 1_d \quad (3 \leq i \leq p). \label{eqn:5.27}
\end{gather}
From (\ref{eqn:5.20}), (\ref{eqn:5.21}) and (\ref{eqn:5.26}), we have 
\[%begin{equation}
Z^+ =- \alpha 1_d + \displaystyle{ \sum_{i=3}^p } \gamma_{1,i} B_i ,  \quad 
Z^- = \beta 1_d + \displaystyle{ \sum_{i=3}^p} \delta_{1,i} B_i. 
%\label{eqn:5.28}
\]%\end{equation}
Then, since $X_1^{(2)} ={1 \over 2} (Z^+ + Z^-)$, $Y_1^{(2)} ={ {\varepsilon} \over 2} (Z^+ -Z^- )$,  we obtain 
\begin{equation}
X_1^{(2)} =\displaystyle{\sum_{i=2}^p} a_{1,i} B_i , \quad Y_1^{(2)} =\displaystyle{ \sum_{i=2}^p} a_{p+1, i} B_i , 
\label{eqn:5.28}
\end{equation}
where we put 
\begin{gather*} 
a_{1,2}={1 \over 2} (\beta - \alpha), \quad a_{p+1, 2} 
 = -{{\varepsilon} \over 2} (\alpha + \beta) \\
a_{1,i} = {1 \over 2}(\gamma_{1,i} + \delta_{1,i} ) , \quad 
a_{p+1, i} = {{\varepsilon} \over 2} ( \gamma_{1,i} - \delta_{1,i}) \quad (3 \leq i \leq p). 
\end{gather*}
Now the identity (\ref{eqn:5.27}) takes the form
\begin{equation}
X_i^{(1)} =(a_{1,i} + \varepsilon a_{p+1, i} ) 1_d , \quad X_i^{(3)} =(-a_{1,i} + \varepsilon a_{p+1, i} ) 1_d \quad (3 \leq i \leq p). 
\label{eqn:5.29}
\end{equation} 
Furthermore, by (\ref{eqn:5.21}), we have 
\[
X_2^{(1)} =-a_{1,2} 1_d -a_{p+1,2} \varepsilon 1_d , \quad 
X_2^{(3)} =a_{1,2} 1_d -a_{p+1,2} \varepsilon 1_d. 
\]
Then, putting $a_{2,1} =-a_{1,2}$, $a_{2,p+1}=-a_{p+1, 2}$, we obtain 
\begin{equation}
X_2^{(1)} =a_{2,1} 1_d + a_{2,p+1} \varepsilon 1_d , \quad 
X_2^{(3)} = - a_{2,1} 1_d + a_{2, p+1} \varepsilon 1_d . \label{eqn:5.30}
\end{equation}
Lemma \ref{lem:5.1} for Case A follows from Lemma \ref{lem:5.2}, (\ref{eqn:5.28}), (\ref{eqn:5.29}) and (\ref{eqn:5.30}).
\\[2ex]
{\bf Case B:}   We consider (\ref{eqn:5.17}) and (\ref{eqn:5.18}) for Case B, namely, under the assumption $q \ne 1$ or $A_1\ne\pm1_d$. 
Put 
\begin{gather*}
X_{1,s}^{(2)} ={1 \over 2} (X_1^{(2)} +{}^t X_1^{(2)} ) , \quad X_{1,a}^{(2)} ={1 \over 2} (X_1^{(2)} - {}^t X_1^{(2)} ) , \\
Y_{i,s}^{(2)} ={1 \over 2} (Y_i^{(2)} +{}^t Y_i^{(2)} ) , \quad Y_{i,a}^{(2)} ={1 \over 2} (Y_i^{(2)} - {}^t Y_i^{(2)} ).
\end{gather*}
Since, by the induction assumption, the condition $(\sufcond)$ holds for $d$-dimensional representations of $R_{1,q}$ for Case B,   
we have from (\ref{eqn:5.17}) and (\ref{eqn:5.18})
\begin{equation}
X_{1,s}^{(2)} +X_2^{(1)} = X_2^{(3)} -X_{1,s}^{(2)} 
 = \sum_{j=1}^q \alpha_j A_j , \quad 
Y_{i,s}^{(2)} =-\alpha_i 1_d + \sum_{j=1}^q \alpha_{ij} A_j \quad ( \alpha_{ij} =- \alpha_{ji} ). 
\label{eqn:5.31}
\end{equation}  
By substituting (\ref{eqn:5.31}) to (\ref{eqn:5.4}) and  (\ref{eqn:5.5}), we obtain
\begin{eqnarray}
\lefteqn{
(u,u) (u, X_{1,a}^{(2)} v) +(u,u) ( u, X_{1,s}^{(2)} v) -(u,v) X_{1,s}^{(2)} [u] 
}\nonumber \\
 & & {} + \sum_{i=3}^p (u,B_i v) X_i^{(1)} [u] 
          + \sum_{i=1}^q A_i [u] (u, Y_{i,a}^{(2)} v) 
          + \sum_{i,j=1}^q \alpha_{ij} A_i [u] (u, A_j v) = 0, 
\label{eqn:5.32} \\
\lefteqn{
 -(v,v) (u, X_{1,a}^{(2)} v) -(v,v) ( u, X_{1,s}^{(2)} v) +(u,v) X_{1,s}^{(2)} [v] 
} \nonumber\\
 & & {} + \sum_{i=3}^p (u,B_i v) X_i^{(3)} [v] 
          + \sum_{i=1}^q A_i [v] (u, Y_{i,a}^{(2)} v) 
          + \sum_{i,j=1}^q \alpha_{ij} A_i [v] (u, A_j v) =0. 
\label{eqn:5.33} 
\end{eqnarray} 
Let us prove that 
\[
\alpha_{ij} =0 \quad ( 1 \leq i,j \leq q).  
\]
Denote by $f_1(u,v)$ (resp.\ $f_2(u,v)$) the left-hand side of (\ref{eqn:5.32}) (resp.\ (\ref{eqn:5.33})) and consider $f_1(u,v) + f_1(v,u) + f_2(u,v) + f_2(v,u)$. 
Then we obtain
\begin{eqnarray*}
\lefteqn{
2\bigl((u,u)-(v,v)\bigr) (u,X_{1,a}^{(2)}v) 
} \\
 & & {} + \sum_{i=3}^p (u,B_i v)\{(X_i^{(1)}-X_i^{(3)})[u] - (X_i^{(1)}-X_i^{(3)})[v] \}  \\
 & & {} + 2\sum_{i,j=1}^q \alpha_{ij} (A_i[u]+A_i[v])(u,A_jv)= 0 
\end{eqnarray*}
This can be written as
\begin{gather*}
S_1[w]T_1[w]+\sum_{i=3}^p S_i[w]T_i[w]+\sum_{j=1}^q S_{p+j}[w]T_{p+j}[w]=0,\\
T_1 = \begin{pmatrix} 0 & X_{1,a}^{(2)} \\ {}^tX_{1,a}^{(2)} & 0 \end{pmatrix}, \quad
T_i = \begin{pmatrix} \frac12 \left(X_i^{(1)}-X_i^{(3)}\right) & 0 \\ 
                         0 & -\frac12 \left(X_i^{(1)}-X_i^{(3)}\right) \end{pmatrix}\ 
 (3 \leq i \leq p),  \\
T_{p+j} = \sum_{k=1}^q \alpha_{jk} 
          \begin{pmatrix} 0 & A_k \\ A_k & 0 \end{pmatrix}\ 
 (1 \leq j \leq q).
\end{gather*}
Since, by the induction assumption, the condition $(\sufcond)$ holds for $2d$-dimensional representations of $R_{p-1,q}$, 
every $T_i$ $(i\ne2)$ is a linear combination of $S_1,S_3,\ldots,S_{p+q}$. 
Then, for $j$ $(1 \leq j \leq q)$, the matrix $\sum_{k=1}^q \alpha_{jk} A_k$, the off-diagonal block of $T_{p+j}$,  is a linear combination of $B_3,\ldots,B_p$, which is skew-symmetric. 
Since $\sum_{k=1}^q \alpha_{jk} A_k$ is symmetric, this equals $0$. 
The linear independence of $A_1,\ldots,A_q$ implies that $\alpha_{jk} =0$ $( 1 \leq j,k \leq q)$.   
Moreover, from (\ref{eqn:5.31}), we get 
\begin{equation}
Y_{i,s}^{(2)} =a_{p+i, 2} 1_d,  \label{eqn:5.34}
\end{equation} 
where we write $a_{p+i, 2}$ for $-\alpha_i$. 
By $\alpha_{ij}=0$, the identities   (\ref{eqn:5.32}) and (\ref{eqn:5.33}) become 
\begin{eqnarray*}
\lefteqn{
 (u,v) X_{1,s}^{(2)} [u] - (u,u) ( u, X_{1,s}^{(2)} v) 
}\\ 
 & & = (u,u) (u, X_{1,a}^{(2)} v) + \sum_{i=3}^p (u,B_i v) X_i^{(1)} [u] 
          + \sum_{i=1}^q A_i [u] (u, Y_{i,a}^{(2)} v), 
\\
\lefteqn{
 -(u,v) X_{1,s}^{(2)} [v] + (v,v) ( u, X_{1,s}^{(2)} v)
} \nonumber\\
 & & =  -(v,v) (u, X_{1,a}^{(2)} v) + \sum_{i=3}^p (u,B_i v) X_i^{(3)} [v] 
          + \sum_{i=1}^q A_i [v] (u, Y_{i,a}^{(2)} v). 
\end{eqnarray*} 
We can apply Lemma \ref{lem:5.3} to these identities, since $s$ in Lemma \ref{lem:5.3} is not greater than $p+q-2$  and $d > 2(p+q-2) \geq 2s$. 
Therefore we have 
\begin{equation}
X_{1,s}^{(2)} =a_{12}1_d 
\label{eqn:5.35}
\end{equation}
for some constant $a_{12}$, and 
\begin{eqnarray*}
(u,u) (u, X_{1,a}^{(2)} v) + \sum_{i=3}^p (u,B_i v) X_i^{(1)} [u] 
          + \sum_{i=1}^q A_i [u] (u, Y_{i,a}^{(2)} v) = 0 
\\
-(v,v) (u, X_{1,a}^{(2)} v) + \sum_{i=3}^p (u,B_i v) X_i^{(3)} [v] 
          + \sum_{i=1}^q A_i [v] (u, Y_{i,a}^{(2)} v) = 0. 
\end{eqnarray*} 
Summing these two identities, we obtain
\begin{eqnarray*}
\bigl((u,u)-(v,v)\bigr) (u, X_{1,a}^{(2)} v) 
 + \sum_{i=3}^p (u,B_i v) (X_i^{(1)} [u] +  X_i^{(3)} [v]) 
          + \sum_{i=1}^q (A_i [u]+A_i [v]) (u, Y_{i,a}^{(2)} v) = 0. 
\end{eqnarray*} 
We can rewrite this as follows:
\begin{gather*}
S_1 [w] Z_1[w] + \sum_{i=3}^{p+q} S_i [w] Z_i [w] = 0, \\
Z_1=\left( \begin{array}{cc} 
0 & X_{1,a}^{(2)} \\
{}^t X_{1,a}^{(2)} & 0 
\end{array} \right) , \quad 
Z_i=\begin{pmatrix} 
X_i^{(1)} & 0 \\
0 & X_i^{(3)} 
\end{pmatrix} \quad (3 \leq i \leq p), \\
Z_{p+j} = \begin{pmatrix} 
0 & Y_{j,a}^{(2)} \\
{}^tY_{j,a}^{(2)} & 0
\end{pmatrix}  \quad (1 \leq j \leq q).
\end{gather*}
Since any $2d$-dimensional representation of $R_{p-1,q}$ satisfies the condition $(\sufcond)$ by the induction assumption, we obtain 
\[
Z_i = \sum_{\stackrel{j=1}{j \neq 2}}^{p+q}  a_{ij} S_j \quad 
(i=1,3,\ldots,p+q)
\]
for some constants $a_{ij}$ with $a_{ij} = -a_{ji}$. 
This identity together with Lemma \ref{lem:5.2}, (\ref{eqn:5.31}), (\ref{eqn:5.34}) and  (\ref{eqn:5.35}) implies Lemma \ref{lem:5.1} for Case B.

%%%%%%%%%%%%%%%%%%%%%%%%%%%%%%%%% h_{pq} 'ÌŒˆ'è
\section{Proof of Theorems \ref{thm:3.4} and \ref{thm:3.5}}

\label{section:h_{p.q}}

In this section we calculate the Lie algebra $\gerh_{p,q}(\rho)$ and prove Theorems \ref{thm:3.4} and \ref{thm:3.5}. 
Our calculation is based on the following Lemma.

\begin{lem}
\label{lem:6.1} 
Let 
$\rho : R_{p,q} \rightarrow M(m;{\Bbb R})$ be a representation of $R_{p,q}$ such that the basis matrices $S_i= \rho (e_i)$ $(1 \leq i \leq p+q)$ are symmetric matrices. 
Put 
\[
\mathcal{A} = \calA_{p,q}(\rho) :=\set{X \in M(m;{\Bbb R})}{\rho (Y) X = X \rho (Y)\ ( \forall Y \in R_{p,q}^+ )}.
\] 
If $ r\in ( R_{p,q})^{\times} $ is an odd element, then we have 
\[
\gerH_{p,q}(\rho)=\set{X \in \calA}{{}^tX \rho(r) + \rho(r)X=0}.
\]
\end{lem}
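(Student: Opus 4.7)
The plan is to reformulate both sides of the claimed equality via conjugation. Since each $S_i$ is symmetric with $S_i^2 = 1_m$ by $(\ref{eqn:clifford cond1})$, the condition ${}^t X S_i + S_i X = 0$ is equivalent to $S_i X S_i = -{}^t X$, and, for any invertible $r$, the condition ${}^t X \rho(r) + \rho(r) X = 0$ is equivalent to $\rho(r) X \rho(r)^{-1} = -{}^t X$. So the proof reduces to showing that, within the commutant $\calA$, the equation $\rho(t) X \rho(t)^{-1} = -{}^tX$ is the same condition for every odd invertible $t \in (R_{p,q})^\times$, and that it automatically forces $X \in \calA$ when $t$ runs over the standard odd generators.

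For the inclusion $\gerh_{p,q}(\rho) \subseteq \{X \in \calA : {}^tX\rho(r) + \rho(r)X = 0\}$, I would first verify $X \in \calA$: the relations $S_i X S_i = -{}^tX = S_j X S_j$ together with $S_i^2 = S_j^2 = 1_m$ give $(S_j S_i) X (S_j S_i)^{-1} = X$, so $X$ commutes with every $S_j S_i = \rho(e_j e_i)$, and these products generate $\rho(R_{p,q}^+)$. The condition involving $\rho(r)$ then follows by writing $r$ as an $\R$-linear combination of odd monomials $m = e_{i_1}\cdots e_{i_{2k+1}}$ and iterating
\[
{}^tX \rho(m) = -S_{i_1} {}^tX S_{i_2}\cdots S_{i_{2k+1}} = \cdots = (-1)^{2k+1}\rho(m)X = -\rho(m)X
\]
via ${}^tX S_i = -S_i X$; summing over monomials yields ${}^tX\rho(r) + \rho(r)X = 0$.

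For the reverse inclusion, the central observation is that for any $X \in \calA$ the conjugate $\rho(t) X \rho(t)^{-1}$ is independent of the choice of odd invertible $t \in (R_{p,q})^\times$. Indeed, using the $\Z/2$-grading of $R_{p,q}$, the inverse of an odd invertible element is again odd: writing $r^{-1} = a+b$ with $a$ even and $b$ odd, the odd component of $r\cdot r^{-1} = 1$ is $ra$, which must vanish, forcing $a = 0$. Hence for any two odd invertible $r, r'$ the element $s := r^{-1} r'$ lies in $R_{p,q}^+$, and since $\rho(s)$ commutes with $X \in \calA$ a short rearrangement gives $\rho(r)X\rho(r)^{-1} = \rho(r')X\rho(r')^{-1}$. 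Specializing to the (odd invertible) generators $e_i$, the common value equals $S_i X S_i^{-1} = S_i X S_i$ for every $i$. Thus the single hypothesis $\rho(r) X \rho(r)^{-1} = -{}^tX$ propagates to $S_i X S_i = -{}^tX$ for all $i$, i.e.\ to $X \in \gerh_{p,q}(\rho)$.

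The only nontrivial point is the parity argument showing $r^{-1}$ is odd whenever $r$ is odd invertible; once this is isolated, both inclusions are essentially one-line computations built around the involutive and (anti)commuting structure of the basis matrices $S_i$.
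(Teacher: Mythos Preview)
Your proof is correct and shares the paper's core idea: for $X \in \calA$, the condition ${}^tX\rho(r) + \rho(r)X = 0$ is the same for every odd invertible $r$, because any two such elements differ multiplicatively by an even unit. The paper's argument is more economical, however. After showing $\gerh_{p,q}(\rho) \subset \calA$ exactly as you do, it sets $r' = e_i r \in (R_{p,q}^+)^\times$ and, for $X \in \calA$, factors
\[
{}^tX\rho(r) + \rho(r)X = \bigl({}^tXS_i + S_iX\bigr)\rho(r'),
\]
which yields both inclusions simultaneously. This bypasses your monomial expansion in the forward direction and the parity-of-inverse argument in the reverse direction; the latter is correct but unnecessary, since one only needs that $e_i r$ is even, which is immediate from odd~$\times$~odd~$=$~even. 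A minor point: in your displayed iteration the intermediate expression should alternate between $X$ and ${}^tX$ as it passes each $S_{i_j}$ (using ${}^tXS_i = -S_iX$ and then $XS_i = -S_i{}^tX$); the conclusion after $2k+1$ steps is unaffected.
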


\begin{proof}
By the definition of $\mathfrak{h}_{p,q} ( \rho )$, 
\[
\gerH_{p,q}(\rho)=\set{X \in M(m;\R)}{{}^tX S_i + S_iX=0\ (1 \leq i \leq p+q)}
\]
If $X \in \mathfrak{h}_{p,q} (\rho)$, then
\[
S_iS_jX=-S_i{}^tXS_j=XS_iS_j.
\]
Since $S_i S_j \ ( 1 \leq i <j \leq p)$ generate $ \rho (R_{p,q}^+)$,  $X$ is in $\mathcal{A}$. 
Thus $\mathfrak{h}_{p,q} ( \rho ) \subset \mathcal{A}$. 
Put  $r'=e_i r$. Then $r'$ is an element of $(R_{p,q}^+)^{\times}$. 
Hence, for $X \in \mathcal{A}$, we have
\begin{eqnarray*}
{}^tX \rho(r)+\rho(r)X
 &=& {}^tX \rho(e_i)\rho(r')+\rho(e_i)\rho(r')X \\
 &=& {}^tX \rho(e_i)\rho(r')+\rho(e_i)X\rho(r') \\
 &=& \bigl({}^tX S_i+S_iX\bigr)\rho(r').
\end{eqnarray*}
Thus, if $X \in \calA$, we have 
\begin{eqnarray*}
{}^tX \rho(r)+\rho(r)X=0\ 
 &\Longleftrightarrow& {}^tX S_i+S_iX=0\ \text{for some $i$}\\ 
 &\Longleftrightarrow& {}^tX S_i+S_iX=0\ \text{for all $i$} 
 \Longleftrightarrow X \in \gerh_{p,q}(\rho).
\end{eqnarray*}
This proves the lemma.
\end{proof}

Note that, we may take $r=e_i$. 
Then we have $\rho(r)^2=1$ and 
the map $X \mapsto -\rho(r)\,{}^tX\rho(r)^{-1}=-S_i\,{}^tXS_i$ is an involutive automorphism of $\calA$. 
Hence $(\calA,\gerh_{p,q}(\rho))$ is a symmetric Lie algebra, if we regard $\calA$ as a Lie algebra (see $(\ref{eqn:calA as matrix algebra})$ below).  

Let $T'$ and $\mathbb K'$ be as in Theorem \ref{thm:3.4}. 
It is often convenient to consider the representation space $W=\R^m$ as a  $\mathbb K'$-vector space. 
Then the algebra $\calA$ is of the form 
\begin{equation}
\label{eqn:calA as matrix algebra}
\calA=\begin{cases} 
         M(k;\mathbb K') = \mathfrak{gl}(k,\mathbb K')& \text{if $R_{p,q}^+ = T'$,}  \\
         M(k_1;\mathbb K')\oplus M(k_2;\mathbb K') 
             = \mathfrak{gl}(k_1,\mathbb K') \oplus \mathfrak{gl}(k_2,\mathbb K')
                         & \text{if $R_{p,q}^+ = T'\oplus T'$,}  
         \end{cases}
\end{equation}
where $k$, $k_1$, $k_2$ are the multiplicities of irreducible representations corresponding to the simple components of $R^+_{p,q}$ in $\rho|_{R^+_{p,q}}$.  
In the sequel we denote the transpose of $X \in \calA$ as a matrix in $M(m;\R)$ by ${}^T\!X$ to distinguish it from the transpose as a matrix in $M(k;\mathbb K')$. 
Then ${}^T\!X$ corresponds to $X^*={}^t\bar X$ in $M(k;\mathbb K')$, where $X \mapsto \bar X$ denotes the conjugate in  $\mathbb K'$. 
 
We put
\[
\hat e_p = e_1\cdots e_p \quad \hat f_q=e_{p+1}\cdots e_{p+q}.
\]
Then it is easy to check the following identities 
\begin{gather}
\hat e_p^2 = \begin{cases} 1 & (p \equiv 0,1 \pmod 4), \\ -1 & (p \equiv 2,3 \pmod 4), \end{cases} \quad 
\hat f_q^2 = \begin{cases} 1 & (q \equiv 0,1 \pmod 4), \\ -1 & (q \equiv 2,3 \pmod 4), \end{cases}
\label{eqn:6.2}\\
e_i\hat e_p=(-1)^{p-1}\hat e_p e_i \quad (i=1,\ldots,p), \quad 
e_{p+i}\hat f_q=(-1)^{q-1}\hat f_q e_{p+i} \quad (i=1,\ldots,q), 
\label{eqn:6.3}\\
e_{p+i}\hat e_p=\hat e_p e_{p+i} \quad (i=1,\ldots,q), \quad 
e_{i}\hat f_q=\hat f_q e_{i} \quad (i=1,\ldots,p). 
\nonumber%\label{eqn:6.4}
\end{gather}

Now we are in a position to prove Theorems \ref{thm:3.4} by case by case examination. 
Theorem \ref{thm:3.5} on the action of $\gerg'_{p,q}(\rho)$ is an immediate consequence of the description of $\gerh_{p,q}(\rho)$ below.  

%%%%%%%%%%% 6.1 %%%%%%%%%%%%%%5

\subsection{The case: $(T,T')$, $({\Bbb K}, {\Bbb K}')=({\Bbb R}, {\Bbb C})$}

In this case, $\{\bar{p},\bar{q}\}=\{0,2\},\{4,6\}$ and  
\[
R_{p,q}=M(2^{n/2};\R) \supset R^+_{p,q} = M(2^{(n-2)/2};\C) \quad (n=p+q). 
\]
We may assume that $p \equiv 0 \pmod 4$ and $q \equiv 2 \pmod  4$. 
Then $\hat{e}_p^2=1, \hat{f}_q^2=-1$. 
The center of $R^+_{p,q}$, which is isomorphic to $\C$,  is given by $\R+\R \bf i$ (${\bf i}:=\hat{e}_p \hat{f}_q$), 
since ${\bf i}^2=-1$ and ${\bf i}$ is a central element of $R_{p,q}^+$. 
Let $W_0$ be a simple $R_{p,q}$-module (unique up to isomorphism). 
Then $W_0$ is simple also  as an $R_{p,q}^+$-module and naturally identified with ${\Bbb C}^{2^{(n-2)/2}}$.
More generally, a not necessarily simple $R_{p,q}$-module $W=\overbrace{W_0 \oplus \cdots \oplus W_0}^k$ is identified with $M(2^{(n-2)/2}, k; {\Bbb C})$. 
Under the identification, $\mathcal{A}=M(k,{\Bbb C})$ and the action of $X \in \mathcal{A}$ on $ w \in W$ is given by
\[
X\cdot w=(w_1,\ldots,w_k)\,{}^t\!\bar X \quad
 (X \in \calA=M(k,\C),\ w=(w_1,\ldots,w_k) \in W=M(2^{(n-2)/2},k,\C)).
\]
Let us take $e_1$ as  $r$ in Lemma \ref{lem:6.1} and calculate 
\[
\gerH_{p,q}=\set{X \in \calA}{{}^TX\rho(r)+\rho(r)X=0}.
\]
For $ \alpha =a + b {\bf i} \in {\Bbb C}$, by (\ref{eqn:6.3}) , we have 
\[
e_1 \alpha = a e_1 +b e_1 \hat{e}_p \hat{f}_q 
               = ae_1 -b \hat{e}_p \hat{f}_q e_1 
               = \bar{\alpha} e_1. 
\]
From this relation, we have
\begin{eqnarray*}
\left({}^T\!X \rho(e_1) + \rho(e_1)X\right)\cdot(w_1,\ldots,w_k) 
 &=& (e_1w_1,\ldots,e_1w_k)X + e_1\bigl((w_1,\ldots,w_k)\,{}^t\bar X\bigr) \\
 &=& (e_1w_1,\ldots,e_1w_k)X + (e_1w_1,\ldots,e_1w_k)\,{}^t X \\
 &=& (e_1w_1,\ldots,e_1w_k)(X + {}^t X). 
\end{eqnarray*}
Thus we have 
\[
{}^T\!X \rho(e_1) + \rho(e_1) X = 0 \quad 
 \Longleftrightarrow \quad {}^tX+X=0 \quad (\text{in $M(k;\C)$})
\]
and 
\[
\gerH_{p,q} = \set{X \in M(k;\C)}{{}^tX+X=0} = \gers\gero(k,\C).
\]

%%%%%%%%%%%%%%%%%%%% 6.2  %%%%%%%%%%%%%%%%%%%%%%%%%%%%%%%%%%%%%

\subsection{The case: $(T,T')$, $({\Bbb K}, {\Bbb K}')=({\Bbb C}, {\Bbb R})$}

In this case, $\{\bar{p},\bar{q}\}=\{0,7\}, \{2,3\}, \{3,4\}, \{6,7\}$ and 
\[
R_{p,q}=M(2^{(n-1)/2};\C) \supset R^+_{p,q} = M(2^{(n-1)/2};\R). 
\]
We may assume that $p \equiv 3 \pmod  4$, $q \equiv 0 \pmod  2$. 
By (\ref{eqn:6.2}), $\hat{e}_p^2=-1$. 
Since $p$ is odd, ${\bf i}:=\hat{e}_p$ is a central element of $R_{p,q}$ and 
the center of $R_{p,q}$ is given by ${\Bbb C} 1_{2^{(n-1)/2} } ={\Bbb R} + {\Bbb R} {\bf i}$.  
Let $W_0$ be a simple $R_{p,q}$-module. Then 
$W_0 ={\Bbb C}^{2^{(n-1)/2}} ={\Bbb R}^{2^{(n-1)/2}}+ {\bf i} { \Bbb R}^{2^{(n-1)/2}} $ is a direct sum of simple $R_{p,q}^+$-modules.
In general a not necessarily simple $R_{p,q}$-module $W=\overbrace{W_0 \oplus \cdots \oplus W_0}^k$ is identified with $M(2^{(n-1)/2},2k; {\Bbb R})$. 
Under this identification, 
$\mathcal{A}=M(2k;{\Bbb R})$ and the action of $X \in \mathcal{A}$ on $ w \in W$ is given 
\[
X\cdot w=(w_1,\ldots,w_{2k}){}^t X \quad
 (X \in \calA=M(2k;\R), w=(w_1,\ldots,w_{2k}) \in W=M(2^{(n-1)/2},2k;\R))
\]
Since $p$ is odd, we may take $\mathbf i$ as $r$ in Lemma \ref{lem:6.1} 
and 
\[
\gerH_{p,q}=\set{X \in \calA}{{}^TX\rho(\mathbf i)+\rho(\mathbf i)X=0}.
\]
Since
\[
\mathbf i (u+\mathbf i v) = -v+\mathbf i u\quad (u,v \in \R^{2^{(n-1)/2}}),
\]
the action of $r={\bf i}$ on $W$ coincides with the action of 
\begin{equation}
\label{eqn:6.4}
J_k := 
\overbrace{
J_1 \perp \cdots \perp J_1
}^k
\in M(2k;\R) = \calA, \quad 
J_1 = \begin{pmatrix} 0 & -1 \\ 1 & 0 \end{pmatrix}
\end{equation}
on $W.$
Thus we have 
\[
{}^TX\rho(r)+\rho(r)X = 0 \quad \Longleftrightarrow \quad {}^tXJ_k+J_kX=0.
\]
Hence, 
\[
\gerH_{p,q} = \set{X \in M(2k;\R)}{{}^tXJ_k+J_kX=0} = \gers\gerp(k,\R).
\]

%%%%%%%%%%%%%%%%%%%    6.3    %%%%%%%%%%%%%%%%%%%%%%%%%%%%%%%%%

\subsection{The case: $(T,T')$,  $({\Bbb K}, {\Bbb K}')=({\Bbb C}, {\Bbb H})$}

In this case, $\{\bar{p},\bar{q}\}=\{0,3\}, \{2,7\}, \{3,6\}, \{4,7\}$, and 
\[
R_{p,q}=M(2^{(n-1)/2};\C) \supset R^+_{p,q} = M(2^{(n-3)/2};\H).
\]
We may assume that $p \equiv 3 \pmod 4$, $q \equiv 0 \pmod 2$. 
As in  the case of $({\Bbb K}, {\Bbb K}')=({\Bbb C}, {\Bbb R})$, the center of $R_{p,q}$ is given by ${\Bbb C} 1_{2^{(n-1)/2} }={\Bbb R} + {\Bbb R} {\bf i}$ with ${\bf i}:=\hat{e}_p$. 
We write $\mathbb H = \C+\C j$. Then $\alpha j = j \bar\alpha$ for $\alpha \in \C$. 

Let $W_0$ be a simple $R_{p,q}$-module. 
Then $W_0 ={\Bbb C}^{2^{(n-2)/2}} $ and $W_0$ is also simple as an $R_{p,q}^+$-module. 
We identify $W_0$ with  
${\Bbb H}^{2^{(n-3)/2}}={\Bbb C}^{2^{(n-3)/2}} + j{\Bbb C}^{2^{(n-3)/2}}$. 
Then a not necessarily simple $R_{p,q}$-module $W=\overbrace{W_0 \oplus \cdots \oplus W_0}^k$ is identified with $M(2^{(n-3)/2},2k; {\Bbb C})$. 
Then, $\mathcal{A}=M(k;,{\Bbb H})$ may be considered as a subalgebra of $M(2k;{\Bbb C})$. 
Since $\H$ is identified with 
\[
\set{\begin{pmatrix} \alpha & \bar\beta \\ -\beta & \bar\alpha \end{pmatrix}}{\alpha,\beta\in\C} 
 = \set{X \in M(2;\C)}{J_1\bar X J_1^{-1}=X},
\]
we have
\[
\calA = M(2k;\C)^\sigma = \set{X \in M(2k;\C)}{\sigma(X)=X}, \quad 
\sigma(X) = J_k \bar X J_k^{-1},
\]
where $J_k$ is the skew symmetric matrix given by (\ref{eqn:6.4}). 
The action of $X \in \mathcal{A}$ on $w \in W$ is then given by 
\[
X\cdot w=(w_1,\ldots,w_k)\,{}^t\!\bar X. 
\]
Since $p$ is odd, we may take $\bf i$ as $r$ in Lemma \ref{lem:6.1} and
\[
\gerH_{p,q}=\set{X \in \calA}{{}^TX\rho(\mathbf i)+\rho(\mathbf i)X=0}.
\]
Since 
\[
\mathbf i (u + j v) = (u -  j v)\sqrt{-1} \quad (u,v \in \C^{2^{(n-3)/2}}),
\]
the action of $r={\bf i}$ on $W$ coincides with the action of $\sqrt{-1}H_k \in \calA$, where
\[
H_k :=  
\overbrace{
\begin{pmatrix}
-1 & 0 \\
0 & 1 \end{pmatrix} 
\perp \cdots \perp 
\begin{pmatrix}
-1 & 0 \\
0 & 1 \end{pmatrix} 
}^k
\in M(2k;\C).
\]
Hence we have 
\[
{}^TX\rho(r)+\rho(r)X = 0 \quad \Longleftrightarrow \quad {}^t\bar XH_k+H_kX=0.
\]
Therefore 
\begin{eqnarray*}
\gerH_{p,q} 
  &=& \set{X \in \calA}{{}^t\bar XH_k+H_kX=0}  \\
  &=& \set{X \in \calA}{{}^t XH_kJ_k+H_kJ_kX=0}.
\end{eqnarray*}
Put 
\[
U_k=\overbrace{\frac{1}{\sqrt{2}} 
      \begin{pmatrix} 1 & 1 \\ \sqrt{-1} & -\sqrt{-1} \end{pmatrix}
      \perp \cdots \perp 
      \frac{1}{\sqrt{2}}\begin{pmatrix} 1 & 1 \\ \sqrt{-1} & -\sqrt{-1} \end{pmatrix}
      }^k.
\]
Then ${}^tU_kU_k=H_kJ_k$. 
The mapping $X \mapsto Y=U_kXU_k^{-1}$ stabilizes $\calA=M(k,\H)$ in $M(2k;\C)$ and gives an isomorphism
\begin{eqnarray*}
\gerH_{p,q} 
 &\cong& \set{Y \in \calA}{{}^tY+Y=0} \\
 &=& \set{Y \in M(2k;\C)}{{}^t Y+Y=0,\ {}^t\bar YJ_k+J_kY=0}
 = \gers\gero^*(2k).
\end{eqnarray*}

%%%%%%%%%%%%%%%%%%%%%%% 6.4 %%%%%%%%%%%%%%%%%%%%%%%%%%%%%%%%%%%

\subsection{The case: $(T,T')$,  $({\Bbb K}, {\Bbb K}')=({\Bbb H}, {\Bbb C})$}

In this case, $\{\bar{p},\bar{q}\}=\{0,6\}, \{2,4\}$ and 
\[
R_{p,q}=M(2^{(n-2)/2};\H) \supset R^+_{p,q} = M(2^{(n-2)/2};\C). 
\]
We may assume that $p \equiv 0 \pmod 4$ and $q \equiv 2 \pmod 4$. 
Then, $\hat{e}_p^2=1$,  $\hat{f}_q^2 =-1$. 
Hence ${\bf i}:=\hat{e}_p \hat{f}_q$ satisfies that ${\bf i}^2 =-1$ and  
the center of $R_{p,q}^+$ is given by 
${\Bbb C} 1_{2^{(n-1)/2} }={\Bbb R} + {\Bbb R} {\bf i}$. 
Let $W_0$ be a simple $R_{p,q}$-module. 
Then 
$W_0 ={\Bbb H}^{2^{(n-2)/2}}$ decomposes to the direct sum of two isomorphic simple $R_{p,q}^+$-modules: 
$W_0 ={\Bbb H}^{2^{(n-2)/2}} ={\Bbb C}^{2^{(n-2)/2}} + {\Bbb C}^{2^{(n-2)/2}} j$. 
In general, a not necessarily simple  $R_{p,q}$-module $W=\overbrace{W_0 \oplus \cdots \oplus W_0}^k$ is identified with $M(2^{(n-2)/2},2k; {\Bbb C})$. 
Under this identification, $\mathcal{A}=M(2k,{\Bbb C})$ and the action of $X \in \mathcal{A}$ on $ w \in W$ is given by 
\[
X\cdot w=(w_1,\ldots,w_k)\,{}^t \!\bar X. 
\]
Since ${\bf j}:=j 1_{2^{(n-2)/2}}$ anticommutes with ${\bf i}=\hat{e}_p \hat{f}_q$, ${\bf j}$ is an odd element of $R_{p,q}$. 
Hence we may take ${\bf j}$ as $r$ in Lemma \ref{lem:6.1} and 
\[
\gerH_{p,q}=\set{X \in \calA}{{}^TX\rho(\mathbf j)+\rho(\mathbf j)X=0}.
\]
Denote by ${\bf c}$ the complex conjugation on $W=M(2^{(n-2)/2},2k;{\Bbb C})$.  Then, since 
\[
\mathbf j (u+v j) = -\bar v + \bar u j  \quad (u,v \in \C^{2^{(n-2)/2}}), 
\]
 the action of $r={\bf j}$ on $W$ coincides with the action of $J_k \mathbf c$. 
Hence we have 
\[
({}^T \!X \rho(r) + \rho(r) X)\cdot w 
  = (\mathbf j w)X + \mathbf j (w\,{}^t\bar X)
  = \bar w\, {}^tJ_kX + \bar w\,{}^t X\, {}^tJ_k
  = \bar w ({}^tJ_kX + {}^t X\, {}^tJ_k).
\]
Then we have 
\[
{}^TX\rho(r)+\rho(r)X = 0 \quad \Longleftrightarrow \quad {}^tXJ_k+J_kX=0.
\]
Therefore
\[
\gerH_{p,q} = \gers\gerp(k,\C).
\]

%%%%%%%%%%%%%%%%%%%%%%% 6.5 %%%%%%%%%%%%%%%%%%%%%%%%%%%
\subsection{The case:  $(T, T' \oplus T')$}

In this case,  
\[
\{\bar p,\bar q\} 
 = \begin{cases}
    \{0,0\}, \{2,2\}, \{4,4\}, \{6,6\} & ({\Bbb K}={\Bbb K}'={\Bbb R})\\
    \{0,4\}, \{2,6\} & ({\Bbb K}={\Bbb K}'=\Bbb H)
    \end{cases}
\]
and 
\[
R_{p,q}=M(\ell;\mathbb K) \supset R_{p,q}^+=M(r;\mathbb K)\oplus M(r;\mathbb K), 
\]
where $\ell, r$ are as in Lemma \ref{lem:T and T'}. 

We have $p \equiv q \pmod 4$. Then 
\[
c^\pm=\frac 12\left(1 \pm \hat e_p\hat f_q\right)
\]
are central orthogonal idempotents of $R_{p,q}^+$. 
For an $R_{p,q}$-module $W$, put $W^{\pm} =c^{\pm}W$. 
Then $W^\pm$ are the isotypic components of $W$ as $R_{p,q}^+$-module and  $W=W^+ \oplus W^-$.
If $W_0$ is a simple $R_{p,q}$-module, the decomposition $W_0 =W_0^+ \oplus W_0^-$ gives two (non-isomorphic) simple $R_{p,q}^+$-modules. 
Then we have the following decomposition 
\[
W=\overbrace{W_0\oplus \cdots \oplus W_0}^k
 =\overbrace{W_0^+\oplus \cdots \oplus W_0^+}^k\oplus \overbrace{W_0^-\oplus \cdots \oplus W_0^-}^k
\]
and the action of $\mathcal{A}=M(k;{\Bbb K})$ on $W$ is given by 
\[
(X_1,X_2)\cdot (w_+,w_-) = (w_+{}^t\bar X_1,w_-{}^t \bar X_2) \quad 
((X_1,X_2) \in M(k;\mathbb K)\oplus M(k;\mathbb K)).
\]
Since $p$ is even, we have 
\[
e_1c^+=c^-e_1, \quad e_1c^-=c^+e_1. 
\]
Hence $\rho (e_1)$ induces a linear isomorphism 
\[
\phi:W_0^+\longrightarrow W_0^-, \quad \psi:W_0^-\longrightarrow W_0^+.
\]
Since $e_1^2=1$, we have $\psi =\phi^{-1}$. 
Hence the action of $e_1$ on $W$ is given by 
\[
e_1\cdot(w_1^+,\ldots,w_k^+,w_1^-,\ldots,w_k^-) 
 = (\phi^{-1}(w_1^-),\ldots,\phi^{-1}(w_k^-),\phi(w_1^+),\ldots,\phi(w_k^+)). 
\]
Taking $e_1$ as $r$ in Lemma \ref{lem:6.1}, we have, for $X=(X_1,X_2) \in \mathcal{A}$,  
\[
({}^TX\rho(r)+\rho(r)X)(w^+,w^-) 
 = (\phi^{-1}(w^-)X_1+\phi^{-1}(w^-\,{}^t\bar X_2),  \phi(w^+)X_2+\phi(w^+\,{}^t \bar X_1).
\] 
Hence, we have 
\[
{}^TX\rho(r)+\rho(r)X =0 \quad \Longleftrightarrow \quad 
X_2=-\phi \circ {}^t\bar X_1 \circ \phi^{-1}.
\]
Therefore
\[
\gerH_{p,q}(\rho)=\set{(X_1,-\phi \circ {}^t \bar X_1 \circ \phi^{-1})}{X_1 \in M(k,\mathbb K)} \cong \gerg\gerl(k,\mathbb K). 
\]

%%%%%%%%%%%%%%%%%%%%%%  6.6  %%%%%%%%%%%%%%%%%%%%%%5

\subsection{The case: $(T \oplus T, T')$}

In this case, 
\[
\{\bar p,\bar q\} 
 = \begin{cases}
    \{0,1\}, \{1,2\}, \{4,5\}, \{5,6\} & (\mathbb K = \mathbb K' = \R) \\
    \{1,3\}, \{1,7\}, \{3,5\}, \{5,7\} & (\mathbb K = \mathbb K' = \C) \\
    \{0,5\}, \{1,4\}, \{1,6\}, \{2,5\} & (\mathbb K = \mathbb K' = \H)
   \end{cases}
\]
and 
\[
R_{p,q}=M(\ell;\mathbb K)\oplus M(\ell;\mathbb K) \supset R_{p,q}^+= M(r;\mathbb K), 
\]
where $\ell, r$ are as in Lemma \ref{lem:T and T'}. 
We may assume that $p \equiv 1 \pmod 4$.
Then 
\[
c^\pm=\frac 12\left(1 \pm \hat e_p\right)
\]
are central orthogonal idempotents of $R_{p,q}$. 
For an $R_{p,q}$-module $W$, put $W^{\pm} =c^{\pm} W$. 
Then $W^\pm$ are the isotypic components as $R_{p,q}$-module and $W=W^+ \oplus W^-$. 
Let $W_0^+$ (resp.\ $W_0^-$) be the simple $R_{p,q}$-module contained in $W^+$ (resp.\ $W^-$). Then we have 
\[
W^+=\overbrace{W_0^+\oplus\cdots\oplus W_0^+}^{k_1}, \quad 
W^-=\overbrace{W_0^-\oplus\cdots\oplus W_0^-}^{k_2}
\]
for some $k_1,k_2$. 
Since $W_0^+$ is isomorphic to $W_0^-$ as $R_{p,q}^+$-module, we have $\mathcal{A}=M(k_1 +k_2;{\Bbb K})$.
The action of $\mathcal{A}=M(k_1 +k_2;{\Bbb K})$ on $W$ is then given by 
\[
X\cdot w = w\,{}^t\!\bar X, \quad 
X \in M(k_1+k_2;\mathbb K).
\]
Since $p$ is odd, we may take $\hat{e}_p$ as $r$ in Lemma \ref{lem:6.1}. 
Then we have 
\[
r c^\pm = \pm c^\pm
\]
Hence, the action of $r$ on $W^+$ (resp. $W^-$) is $ +1$ (resp. $-1$) and the action of $r$ on $W$ coincides with the action of 
\[
I_{k_1,k_2} := 
\begin{pmatrix}
1_{k_1} & 0 \\ 
0 & -1_{k_2} 
\end{pmatrix}
\in \calA. 
\]
Hence we have
\[
{}^TX\rho(r)+\rho(r)X 
 = {}^t\bar X I_{k_1,k_2} + I_{k_1,k_2}X.
\] 
Therefore
\[
\gerH_{p,q}(\rho)=\set{X \in M(k_1+k_2;\mathbb H)}{{}^t\bar X I_{k_1,k_2} + I_{k_1,k_2}X=0} 
 \cong \begin{cases} 
       \gers\gero(k_1,k_2) & (\mathbb K = \R), \\
       \geru(k_1,k_2) & (\mathbb K = \C), \\
       \gers\gerp(k_1,k_2) & (\mathbb K = \H).
       \end{cases}
\]

%%%%%%%%%%%%%%%%%%%%%%%%%%%%%%   6.7   %%%%%%%%%%%%%%%%%%%%%
\subsection{The case:  $(T \oplus T , T' \oplus T')$}

In this case, 
\[
\{\bar p,\bar q\}=\begin{cases} 
                \{3,3\}, \{7,7\} & ({\Bbb K}, {\Bbb K}')=({\Bbb C}, {\Bbb R}) \\
                \{3,7\} & ({\Bbb K}, {\Bbb K}')=({\Bbb C}, {\Bbb H})
                \end{cases}
\]
and 
\[
R_{p,q}=M(\ell;\mathbb K) \oplus M(\ell;\mathbb K) \supset R_{p,q}^+= M(r;\mathbb K')  \oplus M(r;\mathbb K'), 
\]
where $\ell, r$ are as in Lemma \ref{lem:T and T'}. 
Since $p \equiv q \equiv 3 \pmod  4$,  
we have  $\hat{e}_p^2 =\hat{f}_q^2=-1$ and 
\[
c^\pm = \frac 12\left(1 \pm \hat e_p \hat f_q\right)
\]
are central orthogonal idempotents of $R_{p,q}$ and of $R_{p,q}^+$. 
The algebra $R_{p,q}$ has two (non-isomorphic) simple modules $W_0^+, W_0^-$ 
which satisfy
\[
c^+ W^+_0 = W^+_0,\quad 
c^+ W^-_0 = \{0\},\quad 
c^- W^+_0 = \{0\},\quad 
c^- W^-_0 = W^-_0.
\]
Similarly $R_{p,q}^+$ has two(non-isomorphic)  simple modules $W_1^+, W_1^-$ which are not isomorphic which satisfy 
\[
c^+ W^+_1 = W^+_1,\quad 
c^+ W^-_1 = \{0\},\quad 
c^- W^+_1 = \{0\},\quad 
c^- W^-_1 = W^-_1.
\]
In the case where $\mathbb K'=\R$, the $R_{p,q}$-simple modules $W_0^\pm$ are the direct sum of two copies of the simple $R_{p,q}^+$-module $W_1^\pm$, and, in the case where $\mathbb K'=\H$, $W_0^\pm$ are simple as  $R_{p,q}^+$-module:
\[
W_0^\pm= \begin{cases} 
       W_1^\pm \oplus W_1^\pm & (\mathbb K' = \R), \\
       W_1^\pm  & (\mathbb K' = \H).
       \end{cases}
\]
A (not necessarily simple) $R_{p,q}$-module $W$ is written as 
\[
W = \overbrace{W_0^+\oplus\cdots\oplus W_0^+}^{k_1} \oplus 
     \overbrace{W_0^-\oplus\cdots\oplus W_0^-}^{k_2}
\]
and we have
\[
\calA= \begin{cases} 
       M(2k_1;\R) \oplus M(2k_2;\R) & (\mathbb K' = \R), \\
       M(k_1;\H) \oplus M(k_2;\H) & (\mathbb K' = \H).
       \end{cases}
\]
Since $p$ is odd, we may take $\hat{e}_p$ as $r$ in Lemma \ref{lem:6.1}.  
Then, by the calculation as in \S 6.2 or \S 6.3, the action of $r$ on $W$ coincides with the action of $(J_{k_1},J_{k_2}) \in \calA$ or $(H_{k_1},H_{k_2})\in \calA$ according as ${\Bbb K}' ={\Bbb R}$ or ${\Bbb K}' ={\Bbb H}$. 
Therefore
\[
\gerh_{p,q}(\rho) 
     = \begin{cases} 
       \gers\gerp(k_1,\R)\oplus \gers\gerp(k_2,\R) & (\mathbb K' = \R), \\
       \gers\gero^*(2k_1)\oplus \gers\gero^*(2k_2) & (\mathbb K' = \H).
       \end{cases}
\]

%%%%%%%%%%%%%%%%%%%%%%%  6.8  %%%%%%%%%%%%%%%%%%%%%%%%%%%%%%%%%%

\subsection{The case:  $(T \oplus T \oplus T \oplus T , T' \oplus T')$}

In this case, 
\[
\{\bar p,\bar q\}=\begin{cases} 
                \{1,1\}, \{5,5\} & (\mathbb K = \mathbb K' = \R) \\
                \{1,5\} & (\mathbb K = \mathbb K' = \H)
                \end{cases}
\]
and 
\[
R_{p,q}=M(\ell;\mathbb K) \oplus M(\ell;\mathbb K)  \oplus M(\ell;\mathbb K)  \oplus M(\ell;\mathbb K) \supset R_{p,q}^+= M(r;\mathbb K)  \oplus M(r;\mathbb K), 
\]
where $\ell, r$ are as in Lemma \ref{lem:T and T'}. 
Since $p \equiv q \equiv 1 \mod 4$, we have $\hat{e}_p^2 =\hat{f}_q^2=1$ and  $\hat{e}_p, \hat{f}_q$ are central elements of $R_{p,q}$.
Put 
\[
c^\pm_p = \frac 12\left(1 \pm \hat e_p\right), \quad
c^\pm_q = \frac 12\left(1 \pm \hat f_q\right).
\]
Then the elements
\[
c^{++} = c^+_p c^+_q, \quad 
c^{+-} = c^+_p c^-_q, \quad 
c^{-+} = c^-_p c^+_q, \quad 
c^{--} = c^-_p c^-_q 
\]
give central orthogonal idempotents of $R_{p,q}$ and the elements 
\[
c^+ = c^{++}+c^{--} = \frac 12\left(1 + \hat e_p\hat f_q\right), \quad 
c^- = c^{+-}+c^{+-} = \frac 12\left(1 - \hat e_p\hat f_q\right)
\]
give central orthogonal idempotents of  $R^+_{p,q}$. 
The algebra $R_{p,q}$ has $4$ (non-isomorphic) simple modules $W_0^{\pm\pm}$ corresponding to the idempotents $c^{\pm\pm}$. 
The simple $R_{p,q}$-modules $W_0^{++}$ and $W_0^{--}$  (resp.\ $W_0^{+-}$ and $W_0^{-+}$) are isomorphic and simple as $R^+_{p,q}$-module.
A not necessarily simple $R_{p,q}$-module $W$ is written as  
\[
W = \overbrace{W_0^{++}\oplus\cdots\oplus W_0^{++}}^{k_1} \oplus 
    \overbrace{W_0^{--}\oplus\cdots\oplus W_0^{--}}^{k_2} \oplus 
    \overbrace{W_0^{+-}\oplus\cdots\oplus W_0^{+-}}^{k_3} \oplus 
    \overbrace{W_0^{-+}\oplus\cdots\oplus W_0^{-+}}^{k_4}. 
\]
Since $W_0^{++} \cong W_0^{--}$ and $W_0^{+-} \cong W_0^{-+}$ as $R_{p,q}^+$-module, we have
\[
\calA= \begin{cases} 
       M(k_1+k_2;\R) \oplus M(k_3+k_4;\R) & (\mathbb K' = \R), \\
       M(k_1+k_2;\H) \oplus M(k_3+k_4;\H) & (\mathbb K' = \H).
       \end{cases}
\]
Since $p$ is odd, we may take $\hat{e}_p$ as $r$ in Lemma \ref{lem:6.1}. 
Then, by the calculation as in \S 6.5, the action of $r$ on 
\begin{gather*}
W^+:=\overbrace{W_0^{++}\oplus\cdots\oplus W_0^{++}}^{k_1} \oplus 
    \overbrace{W_0^{--}\oplus\cdots\oplus W_0^{--}}^{k_2}, \\
W^-:=\overbrace{W_0^{+-}\oplus\cdots\oplus W_0^{+-}}^{k_3} \oplus 
    \overbrace{W_0^{-+}\oplus\cdots\oplus W_0^{-+}}^{k_4} 
\end{gather*}
coincides with the action of $I_{k_1 ,k_2}$, $I_{k_3, k_4}$, respectively.
Therefore 
\[
\gerh_{p,q}(\rho) 
     = \begin{cases} 
       \gers\gero(k_1,k_2)\oplus \gers\gero(k_3,k_4) & (\mathbb K' = \R), \\
       \gers\gerp(k_1,k_2)\oplus \gers\gerp(k_3,k_4) & (\mathbb K' = \H).
       \end{cases}
\]

%%%%%%%%%%%%%%%%%%%%%%%%%%%%%%%%%%%%%%%%%%%%%%%%%%%%%%%%%%%%%%%%%%%%%%%%%%%%%%%%%%%%%%%%%%%%%%%%%%%%%%%%%%%%

\end{document}